\patchcmd{\subsection}{-.5em}{.5em}{}{}
\newtheorem{thm}{Theorem}[section]
\newtheorem{prop}[thm]{Proposition}
\newtheorem{lemma}[thm]{Lemma}
\newtheorem{cor}[thm]{Corollary}
\newtheorem{conjecture}[thm]{Conjecture}
\newtheorem*{conjecture*}{Conjecture}
\newtheorem{question}[thm]{Question}
\newtheorem*{question*}{Question}
\newtheorem{definition}[thm]{Definition}
\theoremstyle{definition}
\newtheorem{remark}[thm]{Remark}
\newtheorem{ex}[thm]{Example}
\numberwithin{equation}{section}
\numberwithin{figure}{section}
\newcommand{\N}{\mathbb{N}}
\newcommand{\A}{\mathbb{A}}
\renewcommand{\P}{\mathbb{P}}
\newcommand{\R}{\mathbb{R}}
\newcommand{\C}{\mathbb{C}}
\newcommand{\Z}{\mathbb{Z}}
\renewcommand{\O}{\mathcal{O}}
\newcommand{\Ell}{\mathscr{L}}
\newcommand\restr[2]{{
    \left.\kern-\nulldelimiterspace 
    #1
    \right|_{#2} 
}}
\newcommand\supind[1]{{\smash[t]{(#1)}}}
\let\originalleft\left
\let\originalright\right
\renewcommand{\left}{\mathopen{}\mathclose\bgroup\originalleft}
\renewcommand{\right}{\aftergroup\egroup\originalright}
\newenvironment{nouppercase}{%
  \renewcommand{\uppercasenonmath}[1]{}}{}
\newcommand{\defstyle}{\textbf}  %
\DeclareMathOperator{\Sym}{Sym}
\DeclareMathOperator{\Hom}{Hom}
\DeclareMathOperator{\conv}{conv}
\DeclareMathOperator{\Spec}{Spec}
\DeclareMathOperator{\Proj}{Proj}
\DeclareMathOperator{\GL}{GL}
\DeclareMathOperator{\id}{id}
\DeclareMathOperator{\ord}{ord}
\DeclareMathOperator{\diag}{diag}
\DeclareMathOperator{\Pic}{Pic}
\DeclareMathOperator{\im}{im}
\DeclareMathOperator{\codim}{codim}
\DeclareMathOperator{\Cox}{Cox}
\DeclareMathOperator{\lcm}{lcm}
\DeclareMathOperator{\Bs}{Bs}
\DeclareMathOperator{\Cl}{Cl}
\DeclareMathOperator{\Bl}{Bl}
\newcommand\xrightarrowinline[1]{\mathrel{\raisebox{-0.25pt}{\smash{$\xrightarrow{\smash[b]{\raisebox{-0.75pt}[0pt]{\(\scriptstyle #1\)}}}$}}}}
\newcommand\injTo{\xrightarrowinline{\rm{\,inj\,}}}
\newcommand\injDim[1]{\ensuremath{\gamma(#1)}}
\newcommand\nVec{\ensuremath{\mathbf{n}}}
\newcommand\dVec{\ensuremath{\mathbf{d}}}
\newcommand\gitFrac[2]{\ensuremath{{#1}/\!\!/{#2}}}
\title{\LARGE Injection dimensions of projective varieties}
\author{\large Paul Görlach}
\begin{document}
  
\begin{abstract}
  We explore injective morphisms from complex projective varieties $X$ to projective spaces $\P^s$ of small dimension.
  Based on connectedness theorems, we prove that the ambient dimension
  $s$ needs to be at least $2\dim X$ for all injections given by a linear subsystem of a strict power of a line bundle. 
  Using this, we give an example where the smallest ambient dimension cannot be attained from any embedding $X \hookrightarrow \P^n$ by linear projections.
  Our focus then lies on 
  $X = \P^{n_1} \times \ldots \times \P^{n_r}$, in which case there is a close connection to secant loci of Segre--Veronese varieties and the rank~$2$ geometry of partially symmetric tensors, as well as on $X = \P(q_0,\ldots,q_n)$, which is linked to separating invariants for
  representations of finite cyclic groups.
  We showcase three 
  techniques for constructing injections $X \to \P^{2\dim X}$ in specific cases.

\end{abstract}
  
\begin{nouppercase}
  \maketitle
\end{nouppercase}

\vspace{-0.7cm}

\setcounter{tocdepth}{2}
\begingroup \small
\tableofcontents
\endgroup

\vspace{-1cm}

\section{Introduction}
In Differential Geometry, the Whitney embedding theorem asserts that every $n$-dimen\-sional real smooth manifold admits a smooth embedding into $\R^{2n}$. The analogous question in Algebraic Geometry whether every $n$-dimensional complex projective variety admits a closed embedding into $2n$-dimensional complex projective space $\P^{2n}$ has a negative answer. However, when we relax the requirement on the morphism to $\P^{2n}$ from being a closed embedding to being injective (on the level of points), this is an open problem:

\begin{question} \label{qu:injectIntoTwiceDim}
  Does every $n$-dimensional connected 
  complex projective variety $X$ admit an injective morphism $X \injTo \P^{2n}$?
\end{question}

Among other results, we show in this article:
\begin{itemize}[label={$\scriptstyle \blacktriangleright$}]
  \item \cref{qu:injectIntoTwiceDim} can in general not be improved upon: For every $n \geq 3$, there exist connected $n$-dimensional projective varieties that cannot be injectively mapped to $\P^{2n-1}$. See \cref{ex:noSmallInjectionSingular} and \cref{ex:noSmallInjectionSmooth}.
  \item \cref{qu:injectIntoTwiceDim} has an affirmative answer for $X = \P^1 \times \P^1 \times \P^n$ and for weighted projective spaces of the form 
  $X = \P(1,q_1,\ldots,q_n)$ with $\lcm\{q_i,q_j\} = \lcm\{q_1,\ldots,q_n\}$ for all $i \neq j$. We provide injections into small ambient spaces in \cref{cor:P1P1Pm} and \cref{thm:weightedProjectiveConstr}.
\end{itemize}

In general, we define the \defstyle{injection dimension}
$\injDim{X} \in \N$ of a complex projective variety $X$ to be the smallest dimension of a projective space to which there exists an injective morphism from $X$. In a more refined setting, it is desirable to study the injection dimension $\injDim{X,\Ell}$ with respect to a fixed line bundle $\Ell$ on $X$, where we restrict to injective morphisms $X \injTo \P^s$ given by a linear subsystem of $|\Ell|$.

Already in the case of curves, the study of injection dimensions is interesting and largely open. Restricting to very ample line bundles, injections to $\P^2$ are given by cuspidal projections of embeddings of the curve. Very recent progress in this area has been made by \cite{BV18}, especially in the context of space curves lying on irreducible quadrics. More classical work dates back to \cite{Pie81}, where it was proved that general canonical curves of genus~$4$ do not admit cuspidal projections. In particular, this gives examples of curves $C$ with $\injDim{C,\omega_C} = 3 > 2 \dim C$, showing that refining \cref{qu:injectIntoTwiceDim} to injection dimensions with respect to all very ample line bundles cannot have an affirmative answer in general.

On the other hand, for all complex projective varieties $X$, a classical projection argument shows that $\injDim{X,\Ell} \leq 2 \dim X + 1$ for all line bundles $\Ell$ giving rise to injections. With techniques inspired by work on separating invariants \cite{DJ14,DJ16,Rei18}, we prove that there is very little room for improvement for line bundles admitting a root of some order.

\begin{thm}[\cref{thm:lowerBound}] \label{thm:lowerBoundIntro}
  Let $X$ be a complex projective variety and let $\Ell$ be a line bundle on $X$. Then $\injDim{X,\Ell^{\otimes k}} \geq 2\dim X$ for all $k \geq 2$, i.e., every injection $X \injTo \P^s$ given by a linear subsystem of $|\Ell^{\otimes k}|$ satisfies $s \geq 2\dim X$.
\end{thm}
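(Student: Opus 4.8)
The plan is to translate the statement into a question about separating functions on an affine cone and then apply a local connectedness theorem of Grothendieck--Faltings type. I may assume $X$ is irreducible of dimension $d \coloneqq \dim X \ge 1$ (the case $d=0$ is trivial, and a general $X$ reduces to a top-dimensional component, onto which an injection restricts to an injection). Suppose we are given an injection $\phi\colon X \injTo \P^s$ defined by a base-point-free subsystem $V \subseteq H^0(X,\Ell^{\otimes k})$, so that $s = \dim V - 1$. Since $X$ is projective and $\phi$ has finite fibres, $\phi$ is finite, hence $\Ell^{\otimes k} = \phi^*\O_{\P^s}(1)$ is ample and therefore so is $\Ell$. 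After passing to the normalization if necessary, I would form the affine cone $C \coloneqq \Spec\bigoplus_{m\ge0}H^0(X,\Ell^{\otimes m})$, an irreducible normal affine variety of dimension $d+1$ carrying a $\C^\times$-action with vertex $0$ and with $X = \Proj$ of its coordinate ring. The chosen sections $s_0,\dots,s_s\in V$ then become regular functions on $C$ that are homogeneous of degree $k$.

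The key object is the coincidence locus $W \coloneqq \{(c,c')\in C\times C : s_i(c)=s_i(c')\text{ for all }i\}$, cut out by the $s+1$ equations $s_i(c)-s_i(c')$. First I would show, using injectivity together with base-point-freeness and homogeneity of degree $k$, that $W = \bigcup_{\zeta^k=1}\Gamma_\zeta$ where $\Gamma_\zeta \coloneqq \{(c,\zeta c) : c\in C\}$: if some coordinate is nonzero then $(s_i(c))_i = (s_i(c'))_i \neq 0$ forces $\phi$ to identify the images of $c$ and $c'$ in $X$, hence $c'=\zeta c$ for a scalar $\zeta$, and homogeneity then gives $\zeta^k=1$; moreover the common zero locus of the $s_i$ on $C$ is only the vertex. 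Each $\Gamma_\zeta\cong C$ has dimension $d+1$, and for $\zeta\neq\zeta'$ the graphs $\Gamma_\zeta$ and $\Gamma_{\zeta'}$ meet only at $(0,0)$. Crucially, because $k\ge2$ there are at least two such components, so the punctured spectrum of $W$ at the vertex $(0,0)$ is \emph{disconnected}. This is exactly the point at which the hypothesis $k\ge2$ enters, and it correctly excludes the cuspidal case $k=1$ (e.g.\ $\P^n\hookrightarrow\P^n$).

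I would then invoke the Grothendieck--Faltings connectedness theorem: for a complete local ring $R$ that is equidimensional, satisfies Serre's condition $S_2$, and has connected punctured spectrum (all guaranteed here by working with the normal cone, since the completion of the local ring of the irreducible variety $C\times C$ at the vertex is then a normal domain of dimension $2d+2$), every ideal generated by $t\le \dim R - 2$ elements has connected punctured spectrum. Applying the contrapositive to $R=\O_{C\times C,(0,0)}$ and the ideal of $W$: the punctured spectrum of $W$ is disconnected, so the arithmetic rank of its ideal is at least $\dim R - 1 = 2d+1$. Since $W$ is defined by the $s+1$ functions $s_i(c)-s_i(c')$, this forces $s+1\ge 2d+1$, i.e.\ $s\ge 2d = 2\dim X$, as required.

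The main obstacle I expect is commutative-algebraic rather than geometric: ensuring the cone is sufficiently well behaved (equidimensional, $S_2$, and analytically irreducible at the vertex, so that the \emph{ambient} punctured spectrum is connected) for the connectedness theorem to apply, and checking that normalizing $C$ neither merges nor thickens the components $\Gamma_\zeta$ in a way that would destroy the disconnectedness. A secondary point to treat carefully is the possible presence of nontrivial finite stabilizers of the $\C^\times$-action, where the graphs could a priori meet in positive dimension away from the vertex, together with the precise behaviour over the base locus; both are controlled once $C$ is replaced by its normalization and one works locally at the vertex, where only the intersection at $(0,0)$ survives. Everything else is bookkeeping with the dimension count $\dim(C\times C)=2d+2$.
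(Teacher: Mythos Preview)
Your plan is essentially the paper's proof in a different technical dress. Both arguments form the section ring $R=\bigoplus_{m\ge 0}H^0(X,\Ell^{\otimes m})$, look at the coincidence locus in $\Spec R\times\Spec R$ cut out by the $s+1$ functions $s_i(c)-s_i(c')$, observe that it splits into the $k$ translates $\Gamma_\zeta$ of the diagonal under the $\mu_k$-action, and conclude via a connectedness theorem. The only difference is the flavour of that theorem: the paper passes to $\Proj(R\otimes_{\C}R)$ and applies the projective Fulton--Hansen connectedness theorem \cite[Theorem~3.3.3]{Laz04} to the morphism $\psi\colon\Proj(R\otimes_{\C}R)\to\P(V^*\times V^*)$ and the linear subspace $L$ coming from the diagonal, whereas you localize at the vertex and invoke Grothendieck--Faltings. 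These are the two standard incarnations of the same principle, and both give exactly the inequality $s+1\ge 2\dim X+1$.

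One point to correct: ``passing to the normalization if necessary'' is both unnecessary and unsafe. Unsafe because an injection $X\injTo\P^s$ need not stay injective after precomposing with $\tilde X\to X$, so you cannot freely replace $X$ by its normalization. Unnecessary because the completion of a positively graded Noetherian domain at its irrelevant ideal is automatically a domain (the filtrations $R_+^n$ and $R_{\ge n}$ are cofinal, so $\widehat R\cong\prod_d R_d$, and a lowest-degree-term argument shows this has no zero divisors); hence Faltings' theorem applies to $\widehat{\O_{C\times C,(0,0)}}$ without any normality hypothesis. Likewise, the disjointness of the $\Gamma_\zeta$ away from the vertex is not a normalization issue: it follows, exactly as in the paper, from the fact that for $d\gg 0$ with $\gcd(d,k)=1$ the line bundle $\Ell^{\otimes d}$ is globally generated, so the common vanishing of $R_d$ on $C$ is only the vertex. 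With these two adjustments your outline goes through and matches the paper's argument.
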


This theorem vastly generalizes previous work in \cite[§5]{DJ16}, whose arguments amount to proving the above result for the special case $X = \P^{n_1} \times \ldots \times \P^{n_r}$ and $\Ell = \O(1,\ldots,1)$.\footnote{The result claimed in \cite[§5]{DJ16} is more general, but unfortunately incorrect. \cref{prop:tangentialInjections} gives a counterexample; see also the discussion at the beginning of \cref{sec:lowerBounds}.} We also refer to \cite[§6]{DJ16} for results on injective morphisms preserving toric structures from a viewpoint of separating invariants for representations of tori. 

In the setting of normal varieties with singularities, we provide an extension of \cref{thm:lowerBoundIntro} by giving a similar bound when the assumption of divisibility in the Picard group is replaced by divisibility in the class group, see \cref{thm:lowerBoundReflexive}. Applied to weighted projective spaces, this gives rise to the following result:

\begin{thm}[\cref{cor:weightedProjectiveBounds} and \cref{thm:weightedProjectiveConstr}] \label{thm:weightedProjectiveIntro}
  Consider a weighted projective space $X = \P(q_0,\ldots,q_n)$ with $\gcd(q_0,\ldots,\widehat{q_i},\ldots,q_n) = 1$ for all $i$, and let $\ell \geq 2$ be minimal such that $\lcm(q_{i_1},\ldots,q_{i_\ell}) = \lcm(q_0,\ldots,q_n)$ for all $i_1, \ldots, i_\ell$ distinct. Let $\Ell$ be the ample line bundle generating $\Pic(X)$. Then
    \[\injDim{X,\Ell^{\otimes k}} \geq
      \begin{cases} 
        n + \ell - 2 &\text{if } k = 1, \\ 2n &\text{if } k \geq 2. \end{cases}\]
  For $\ell \in \{2,3\}$ and $q_0 = 1$, equality holds.
\end{thm}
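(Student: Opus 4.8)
The statement packages a lower bound (\cref{cor:weightedProjectiveBounds}) together with a matching construction (\cref{thm:weightedProjectiveConstr}), so the plan is to treat these two halves separately. Throughout I set $m \coloneqq \lcm(q_0,\ldots,q_n)$ and $d_i \coloneqq m/q_i$, and I use that the normalization $\gcd(q_0,\ldots,\widehat{q_i},\ldots,q_n)=1$ makes $X=\Proj\C[x_0,\ldots,x_n]$ well-formed, with $\Cl(X)\cong\Z$ generated by $\O(1)$ and $\Pic(X)\cong\Z$ generated by $\Ell=\O(m)$. For the lower bound the case $k\ge 2$ is immediate: then $\Ell^{\otimes k}$ is a strict power of the honest line bundle $\Ell$, so \cref{thm:lowerBoundIntro} applies verbatim and yields $\injDim{X,\Ell^{\otimes k}}\ge 2\dim X=2n$. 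The case $k=1$ is the substantive one, where $\Ell=\O(m)$ is only divisible in the class group; here I would invoke the reflexive refinement \cref{thm:lowerBoundReflexive}. The combinatorial input is that for any $\ell-1$ distinct indices the coordinate sub-weighted-projective-space $L\cong\P(q_{i_1},\ldots,q_{i_{\ell-1}})$ has $m'\coloneqq\lcm(q_{i_1},\ldots,q_{i_{\ell-1}})<m$, so that $\O_L(m)=\O_L(m')^{\otimes(m/m')}$ is itself a strict power; the index $\ell$ measures exactly how much of this divisibility survives, and I expect \cref{thm:lowerBoundReflexive} to convert it into the bound $n+\ell-2$.

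For the equality cases $\ell\in\{2,3\}$ with $q_0=1$ the lower bound also admits a direct, cleaner justification. For $\ell=2$ the bound $n+\ell-2=n$ is simply the dimension bound. For $\ell=3$ it reads $\injDim{X,\Ell}\ge n+1$, which I would obtain from a general fact: any injective morphism $X\to\P^n$ from a normal projective variety of dimension $n$ is proper and quasi-finite, hence finite, with image a closed irreducible $n$-dimensional subvariety of $\P^n$, so the image is all of $\P^n$ and the morphism is finite and (in characteristic $0$) birational, thus an isomorphism $X\cong\P^n$ by Zariski's main theorem. Since $\ell=3$ forces some $q_i>1$ and a well-formed weighted projective space with a weight exceeding $1$ is singular, no injection into $\P^n$ can exist.

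The construction in \cref{thm:weightedProjectiveConstr} is where the real work lies, and I would organize it around the cyclic-cover structure. Since $q_0=1$ the chart $U_0=\{x_0\ne0\}\cong\A^n$ is smooth, the boundary is $H=\{x_0=0\}\cong\P(q_1,\ldots,q_n)$, and the pure-power map $\nu=[x_0^{d_0}:\cdots:x_n^{d_n}]\colon X\to\P^n$ realizes $\P^n$ as the quotient $X/\mu_m$ for the action of $\mu_m$ through $x_0\mapsto\zeta x_0$, branched along its fixed locus $H$. For $\ell=3$ the pairwise condition $\lcm(q_i,q_j)=m$ together with $\gcd(q_1,\ldots,q_n)=1$ forces, by an elementary valuation computation, $\prod_{i\ge1}d_i=m$, whence $\nu$ restricts to an isomorphism $H\cong\P^{n-1}$. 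The naive injection $[\,x_0^m : (x_0^{m-q_i}x_i)_i : (x_i^{d_i})_i\,]\colon X\hookrightarrow\P^{2n}$ separates $U_0$ by affine coordinates and $H$ by this isomorphism; the entire task is to compress it into $\P^{n+\ell-2}$ by passing to a suitable $(n+\ell-1)$-dimensional linear subsystem of $|\O(m)|$, the case $\ell=2$ (forcing $q_1=\cdots=q_n=m$) being the transparent model $[x_0^m:x_1:\cdots:x_n]$ into $\P^n$.

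The hard part will be this compression: proving injectivity of the compressed map across all orbit strata at once. On the stratum with support $\{0\}\cup T$ the residual stabilizer is $\mu_{\gcd(q_i:i\in T)}$, so the orbit is a $\mu_{m/\gcd(q_i:i\in T)}$-torsor that must be separated by a section of weight coprime to $m/\gcd(q_i:i\in T)$ not vanishing on that stratum, which is exactly the separating-invariant problem for the finite cyclic group. The index $\ell$ controls how few such semi-invariant sections are required, and the delicate point is to exhibit a single linear subsystem of the predicted dimension that is simultaneously base-point-free and faithful on every stratum, so that no two distinct points of $X$ share an image. Matching this optimum against the reflexive lower bound, so that the constructed injection is provably into the smallest possible projective space, is the crux of the argument; the $k\ge 2$ equality $\injDim{X,\Ell^{\otimes k}}=2n$ should then follow from the same strata analysis together with the classical projection bound.
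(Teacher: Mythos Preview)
Your lower-bound strategy is right in outline but mis-identifies how \cref{thm:lowerBoundReflexive} is actually applied. The theorem does not care about divisibility of $\O_L(m)$ after restricting to a coordinate subspace $L$; its input is the dimension of $\bigcap_{q\nmid j}\Bs|jD|$ for a prime power $q$ dividing the index $k$ (here $k=m$). The paper's argument is: by minimality of $\ell$ there exist $\ell-1$ weights $q_{i_1},\ldots,q_{i_{\ell-1}}$ whose lcm misses some prime power $p^r\mid m$; then every homogeneous piece $R_j$ with $p^r\nmid j$ contains the monomials in $x_{i_1},\ldots,x_{i_{\ell-1}}$ of weighted degree $j$, so $\bigcap_{p^r\nmid j}\Bs|jD|\subseteq V(x_{i_1},\ldots,x_{i_{\ell-1}})$, giving $\delta\le n-\ell+2$ and hence $\injDim{X,\Ell}\ge 2n-\delta\ge n+\ell-2$. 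Your Zariski-main-theorem remark is a correct shortcut, but only for the equality cases $\ell\in\{2,3\}$; it does not establish the general bound $n+\ell-2$ claimed in the theorem.

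The construction half is where your proposal has a genuine gap. You correctly identify the decomposition $X=U_0\cup H$ and that $\nu|_H$ is an isomorphism when $\ell\le 3$, but you never produce a candidate linear subsystem, and your ``compression'' plan via orbit-strata and separating semi-invariants remains a wish rather than an argument. The paper's proof is not abstract: it writes down the explicit maps
\[
\varphi_1\colon [x_0:\cdots:x_n]\mapsto[x_0^m:x_0^{m-q_1}x_1:x_1^{a_1}+x_0^{m-q_2}x_2:\cdots:x_{n-1}^{a_{n-1}}+x_0^{m-q_n}x_n:x_n^{a_n}]
\]
and, for $k\ge 2$,
\[
\varphi_k\colon [x_0:\cdots:x_n]\mapsto\Big[\sum_{\substack{i+j=\ell\\ i\le j}}x_i^{ka_i-b_{ij}}x_j^{b_{ji}}\ \Big|\ \ell=0,\ldots,2n\Big],
\]
and the entire point is that on $D(x_0)$ (setting $x_0=1$) these coordinate expressions are \emph{triangular} in $x_1,\ldots,x_n$, hence define closed embeddings of $\A^n$, while on $V(x_0)$ they reduce (via the well-forming isomorphism you noted) to the same problem one dimension down. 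Your framework does not suggest this triangular form, and without it there is no injectivity proof. Finally, your closing remark that the $k\ge 2$ equality ``should follow from the classical projection bound'' is incorrect: that bound only gives $2n+1$, so an explicit construction into $\P^{2n}$ (namely $\varphi_k$) is genuinely required.
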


This generalizes the classification of injection dimensions for (non-weighted) projective spaces carried out in \cite{DJ14} and has analogues in the theory of separating invariants for actions of finite cyclic groups \cite{Duf08, DJ14}. From \cref{thm:weightedProjectiveIntro}, we deduce that for the weighted projective space $\P(1,6,10,15)$, the smallest injection dimension cannot be attained via linear projections starting from any embedding as a subvariety of projective space, see \cref{ex:minNotVeryAmple}.

We then focus on products of projective spaces: $X = \P^{n_1} \times \ldots \times \P^{n_r}$. 
In \cite{DJ16}, techniques from local cohomology were employed to bound their injection dimensions as follows:
  \[\injDim{\P^{n_1} \times \ldots \times \P^{n_r}} \geq 2({\textstyle \sum_{i=1}^r n_i})-2\min\{n_1,\ldots,n_r\}+1.\]
We give a geometric argument for the following improved bound:

\begin{thm}[\cref{cor:boundProductProjectiveSpacesGeneral}]
  For all $n_1,\ldots,n_r \geq 1$, we have
  \[\injDim{\P^{n_1} \times \ldots \times \P^{n_r}} \geq 2({\textstyle \sum_{i=1}^r n_i}) - \min\{n_1, \ldots, n_r\}.\]
\end{thm}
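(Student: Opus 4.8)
The plan is to reduce the statement to a purely dimension-theoretic fact about disjoint subvarieties of $X$ and then exhibit an optimal disjoint pair in the product. The guiding principle, valid for an arbitrary projective variety, is the following: if $\varphi\colon X \to \P^s$ is any injective morphism and $A, B \subseteq X$ are closed subvarieties with $A \cap B = \varnothing$, then necessarily $\dim A + \dim B < s$. Indeed, an injective morphism from a complete variety is finite onto its image, so $\varphi(A),\varphi(B) \subseteq \P^s$ are irreducible of dimensions $\dim A$ and $\dim B$; were their dimensions to sum to at least $s$, the projective dimension theorem would force $\varphi(A) \cap \varphi(B) \neq \varnothing$, producing points $a \in A$ and $b \in B$ with $\varphi(a) = \varphi(b)$, and disjointness of $A$ and $B$ would give $a \neq b$, contradicting injectivity. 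Consequently $\injDim{X} \ge 1 + \max\{\dim A + \dim B\}$, where the maximum ranges over pairs of disjoint closed subvarieties $A,B \subseteq X$.

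It then remains to construct, in $X = \P^{n_1} \times \ldots \times \P^{n_r}$, a disjoint pair realising $\dim A + \dim B = 2({\textstyle\sum_i n_i}) - \min_i n_i - 1$. Writing $N = \sum_i n_i$ and assuming without loss of generality that $n_1 = \min_i n_i$, I would use the projection $\pi\colon X \to \P^{n_1}$ onto the smallest factor. Fixing a point $p_0 \in \P^{n_1}$ and a hyperplane $H \cong \P^{n_1-1}$ with $p_0 \notin H$, I set $A = \pi^{-1}(p_0)$ and $B = \pi^{-1}(H)$. These are disjoint because $\{p_0\} \cap H = \varnothing$, and since the fibres of $\pi$ have dimension $N - n_1$ one computes $\dim A = N - n_1$ and $\dim B = (n_1 - 1) + (N - n_1) = N - 1$, so that $\dim A + \dim B = 2N - n_1 - 1$. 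Feeding this into the principle above yields $s \ge 2N - n_1 = 2({\textstyle\sum_i n_i}) - \min_i n_i$ for every injective $\varphi$, as desired.

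The conceptual content sits entirely in the first step: once injectivity is recognised as being obstructed precisely by disjoint subvarieties whose images are forced to meet in $\P^s$, the bound becomes a question about the largest dimension sum of a disjoint pair in the product, and the point-versus-hyperplane configuration in the minimal factor resolves it and explains the appearance of $\min_i n_i$. I expect the remaining points to be routine rather than obstacles: checking that $\varphi$ is finite so that images do not drop dimension, and invoking the projective dimension theorem in the non-strict form $\dim Y + \dim Z \ge s \Rightarrow Y \cap Z \neq \varnothing$ in $\P^s$. A feature worth stressing is that the argument never refers to the line bundle defining $\varphi$, so it bounds the unrestricted injection dimension and thereby improves the local-cohomology estimate of \cite{DJ16} uniformly; concretely, taking $A,B$ to be two disjoint \emph{point}-fibres of $\pi$ gives $\dim A + \dim B = 2N - 2\min_i n_i$ and recovers exactly their bound $2N - 2\min_i n_i + 1$, while upgrading one point-fibre to a disjoint \emph{hyperplane}-fibre gains $\min_i n_i - 1$ and produces $2N - \min_i n_i$.
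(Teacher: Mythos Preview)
Your proposal is correct and follows essentially the same approach as the paper. The paper packages your first step as a standalone lemma (\cref{prop:boundByNonemptyIntersections}) and your second step as a general proposition about fibrations $X\to S$ (\cref{cor:lowerBoundFromMorphism}), then applies it to the projection onto the smallest factor; your point-versus-hyperplane configuration in $\P^{n_1}$ is exactly the disjoint pair $Y_0,Z_0$ with $\dim Y_0+\dim Z_0=\dim S-1$ that the paper invokes in proving that proposition.
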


Moreover, we develop techniques for producing explicit injective morphisms into twice-dimensional projective spaces, see \cref{prop:tangentialInjections}, \cref{prop:injectP1P1}, \cref{thm:injectP1Pn} and \cref{cor:P1P1Pm}. The following summarizes the current knowledge on small injection dimensions for products of projective spaces:

\begin{thm}
  In the following cases, $\injDim{\P^{n_1} \times \ldots \times \P^{n_r}, \O(d_1,\ldots,d_r)} \leq 2 (\sum_{i=1}^r n_i)$ holds:
  \begin{itemize}[label={$\scriptstyle \blacktriangleright$}]
    \item $r = 1$, \hspace{1em} (previously known \cite[Proposition~5.2.2]{Duf08})
    \item $r = 2$, $n_1 = 1$, $d_2=1$,
    \item $r = 2$, $n_1 = n_2 = 2$, $d_1 = 1$, $d_2 = 2$,
    \item $r = 3$, $n_1 = n_2 = 1$, $d_1 = d_2 = d_3 = 1$.
  \end{itemize}
  Moreover, $\injDim{\P^{n_1} \times \ldots \times \P^{n_r}, \O(d_1,\ldots,d_r)} = 2 (\sum_{i=1}^r n_i) - 1$ holds for the cases
  \begin{itemize}[label={$\scriptstyle \blacktriangleright$}]
    \item $r = 2$, $d_1 = d_2 = 1$, \hspace{1em} (known as folklore, e.g.~\cite[Example~5.1.2.2]{Lan12})
    \item $r = 2$, $n_1 = n_2 = 1$, $d_1 = 1$, $d_2 = 2$.
  \end{itemize}
\end{thm}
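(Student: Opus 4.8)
The plan is to read off each line of both lists from a result established earlier in the paper (or from the classical literature), since the statement merely records, in one place, bounds obtained by several independent methods; the two equalities then require pairing an upper bound with a matching lower bound.

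For the upper bounds in the first list I would argue case by case. The case $r=1$ is the classical generic-projection bound for a single projective space, \cite[Proposition~5.2.2]{Duf08}. The case $r=2$, $n_1=1$, $d_2=1$ is precisely \cref{thm:injectP1Pn}. The case $r=2$, $n_1=n_2=2$, $d_1=1$, $d_2=2$ is produced by the tangential construction of \cref{prop:tangentialInjections}, and the case $r=3$, $n_1=n_2=1$, $d_1=d_2=d_3=1$ is \cref{cor:P1P1Pm} (with the third factor being $\P^{n_3}$). In each case the cited result exhibits an injective morphism into $\P^{2(\sum_i n_i)}$ cut out by a linear subsystem of $|\O(d_1,\ldots,d_r)|$, which is exactly the asserted inequality.

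For the two equalities I would combine an upper and a lower bound. When $n_1=n_2=1$ and $(d_1,d_2)=(1,2)$, the upper bound $\injDim{\P^1 \times \P^1,\O(1,2)} \leq 3$ is \cref{prop:injectP1P1}, while the lower bound is immediate from the general estimate of \cref{cor:boundProductProjectiveSpacesGeneral}, which here gives $\injDim{\P^1 \times \P^1} \geq 2(1+1) - 1 = 3$; since $\injDim{X} \leq \injDim{X,\Ell}$ always, equality follows. For $d_1=d_2=1$ and arbitrary $n_1,n_2$ the relevant linear systems are the linear projections of the Segre embedding $\P^{n_1}\times\P^{n_2} \hookrightarrow \P^N$ with $N=(n_1+1)(n_2+1)-1$, whose secant variety $\sigma_2$ is the determinantal variety of matrices of rank $\leq 2$, of projective dimension $2(n_1+n_2)-1$. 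A generic projection to $\P^{2(n_1+n_2)-1}$ has a center disjoint from $\sigma_2$ and hence embeds the Segre variety, giving the upper bound; this is the folklore fact \cite[Example~5.1.2.2]{Lan12}. Conversely, a projection to $\P^{2(n_1+n_2)-2}$ has a center $L$ with $\dim L + \dim\sigma_2 = N$, so by the projective dimension theorem $L$ meets $\sigma_2$; as $L$ is disjoint from the Segre variety itself, any such intersection point has rank exactly $2$ and therefore lies on an honest chord joining two distinct points of the Segre variety, so the projection is not injective.

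I expect no genuine obstacle in assembling the statement, since its difficulty is concentrated in the cited construction results \cref{prop:tangentialInjections}, \cref{thm:injectP1Pn}, \cref{cor:P1P1Pm} and \cref{prop:injectP1P1}. The one point requiring an actual (if short) argument is the sharp lower bound in the Segre case, where it is the rank stratification of $\sigma_2$ that upgrades ``the center meets the secant variety'' to ``the center meets an honest secant line,'' thereby forcing failure of injectivity rather than merely failure of the immersion property.
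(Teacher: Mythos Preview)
Your approach is essentially the paper's own: the theorem is a summary statement whose proof consists of pointing to the relevant constructions (\cref{prop:tangentialInjections}, \cref{prop:injectP1P1}, \cref{thm:injectP1Pn}, \cref{cor:P1P1Pm}) together with the folklore Segre argument and the lower bound from \cref{cor:boundProductProjectiveSpacesGeneral}. There is, however, one genuine citation error you should fix.

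For the equality $\injDim{\P^1\times\P^1,\O(1,2)}=3$, you attribute the upper bound to \cref{prop:injectP1P1}. That proposition only applies for $d\ge 3$ and only produces an injection into $\P^4$, giving $\le 4$, not $\le 3$. The correct source for the bound $\le 3$ is the first morphism in \cref{prop:tangentialInjections}, which exhibits an explicit injection $\P^1\times\P^1\injTo\P^3$ by bidegree~$(1,2)$ forms and already records the equality $\injDim{\P^1\times\P^1,\O(1,2)}=3$. Your lower bound via \cref{cor:boundProductProjectiveSpacesGeneral} is fine (equivalently, \cref{lem:dimPlusOne} suffices since $\P^1\times\P^1$ is not a projective space).

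One small remark on your Segre lower bound: rather than splitting into ``$L$ disjoint from $Y$'' and analyzing the rank of an intersection point, it is cleaner to note that for the two-factor Segre the rank~$\le 2$ locus is closed, so $\sigma_2^\circ(Y)=\sigma_2(Y)$; then $\dim L+\dim\sigma_2(Y)=N$ forces $L\cap\sigma_2^\circ(Y)\neq\emptyset$ directly, and \cref{lem:secantAvoidance} finishes. Your version is correct but slightly roundabout.
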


\vspace{1em}

\noindent \textsc{Acknowledgments.} The author would like to thank Mateusz Michałek and Bernd Sturmfels for their guidance in this project, as well as Azeem Khadam for helpful in-depth discussions on local cohomology. 
\section{Secant avoidance and separating invariants}
In this section, we start out by establishing basic notions and gathering general observations on injective morphisms from arbitrary projective varieties to projective spaces and their relation to secant loci. We then highlight close interactions between injection dimensions and the theory of separating invariants. Finally, we relate the case of products of projective spaces to the identifiability of decomposable partially symmetric tensors under linear quotient operations. 

\subsection{General observations}

First, we fix some conventions for the entire article: Throughout, we work over the base field $\C$ and consider complex varieties, not assumed to be irreducible in general. %
For a finite-dimensional vector space $V$, we denote by $\C[V^*] := \Sym^\bullet V^*$ the graded ring of polynomial functions on $V$ and by $\P(V) := \Proj\: \C[V^*]$ the projective space parameterizing one-dimensional subspaces of $V$. For $v \in V \setminus \{0\}$, the corresponding point in $\P(V)$ is denoted $[v]$. The term subvariety (or subscheme, point etc.)\ refers to a \emph{closed} subvariety (subscheme, point etc.), unless mentioned otherwise.

We recall that a choice of global sections $f_0, \ldots, f_s \in H^0(X,\Ell)$ of a line bundle $\Ell$ on a variety $X$ determines a rational map to a projective space 
$X \dashrightarrow \P^s$.
In a coordinate-free manner, it 
is
the composition of the natural evaluation $\varphi_\Ell \colon X \dashrightarrow \P(H^0(X,\Ell)^*)$ with the projection $\P(H^0(X,\Ell)^*) \dashrightarrow \P(V^*)$, where 
$V$ is the subspace of $H^0(X,\Ell)$
spanned by $f_0, \ldots, f_s$. Conversely, for any non-zero subspace $V \subseteq H^0(X,\Ell)$, the composition $X \dashrightarrow \P(H^0(X,\Ell)^*) \dashrightarrow \P(V^*)$ is a well-defined rational map, which we denote by $\varphi_V$. In particular, $\varphi_\Ell = \varphi_{H^0(X, \Ell)}$ in our notations.

\begin{definition}[Injection dimension]
  Let $X$ be a projective variety and $\Ell$ a line bundle on $X$. The \defstyle{injection dimension of $X$ with respect to $\Ell$}, denoted $\injDim{X,\Ell}$, is defined as the smallest dimension of a projective space into which $X$ can be injected by global sections of $\Ell$. Formally, 
  \begin{align*}
    \injDim{X,\Ell} := \inf\big\{\dim V-1 \ {}\mid{} \ &V \subseteq H^0(X,\Ell) \text{ non-zero subspace such that } \\ %
    &\varphi_V \colon X \dashrightarrow \P(V^*) \text{ is an injective morphism}\big\}.
  \end{align*}
  We define the \defstyle{injection dimension of $X$} as
    \[\injDim{X} := \min\{\injDim{X,\Ell} \mid \Ell \text{ line bundle on } X\} = \min\{s \in \N \mid \text{there exists } X \injTo \P^s\}.\]
\end{definition}

Except for the case of projective spaces, the injection dimension is strictly larger than the dimension of the variety, as we note as an easy consequence of Zariski's Main Theorem:

\begin{lemma} \label{lem:dimPlusOne}
  Let $X$ be a projective variety not isomorphic to a projective space. 
  Then $\injDim{X} \geq \dim X + 1$.
\end{lemma}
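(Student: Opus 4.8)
The plan is to argue by contradiction: assume $\injDim{X} \le n := \dim X$, so that there is an injective morphism $\varphi\colon X \to \P^s$ with $s \le n$, and derive that $X$ must be isomorphic to $\P^n$. First I would control the image. Since $X$ is projective, $\varphi$ is proper, so $Z := \varphi(X)$ is a closed subvariety of $\P^s$. Because $\varphi$ is injective on points, all its fibres are finite; as a surjective morphism with finite fibres preserves dimension, one gets $\dim Z = \dim X = n$. Together with $\dim Z \le s \le n$ this forces $s = n$, and a closed subvariety of $\P^n$ of dimension $n$ must have an irreducible component equal to the irreducible $\P^n$, whence $Z = \P^n$. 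Thus $\varphi\colon X \to \P^n$ is a bijective morphism.

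Next I would reduce to the irreducible case. If $X = A \cup B$ with $A, B$ proper closed subsets, then $\varphi(A)$ and $\varphi(B)$ are closed (by properness) and cover the irreducible $\P^n$, so one of them, say $\varphi(A)$, equals $\P^n$; injectivity then gives $A = X$, a contradiction. Hence $X$ is irreducible, and being a variety it is reduced, so $\varphi$ induces an inclusion of function fields $\C(\P^n) \hookrightarrow \C(X)$.

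The final step is to invoke Zariski's Main Theorem. As $\varphi$ is proper with finite fibres, it is finite. Working over $\C$, every field extension is separable, so the number of points in a general fibre equals $[\C(X):\C(\P^n)]$; injectivity makes this degree $1$, i.e.\ $\varphi$ is birational. Now a finite birational morphism onto a normal variety is an isomorphism: since $\P^n$ is normal, $\O_{\P^n}$ is integrally closed in $\C(\P^n) = \C(X)$, while $\varphi_* \O_X$ is a finite (hence integral) $\O_{\P^n}$-subalgebra of this function field, so $\varphi_* \O_X = \O_{\P^n}$ and $\varphi$ is an isomorphism. This contradicts $X \not\cong \P^n$, giving $\injDim{X} \geq \dim X + 1$.

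The main obstacle is the last step: upgrading a bijective morphism to an isomorphism genuinely requires both the characteristic-zero hypothesis (to pass from bijectivity to birationality via separability) and the normality of the target $\P^n$, which is exactly where Zariski's Main Theorem enters. A bijective morphism alone need not be an isomorphism—witness the normalization of a cuspidal curve, or Frobenius in positive characteristic—so neither ingredient can be dropped.
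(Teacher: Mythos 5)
Your proof is correct and takes essentially the same route as the paper's: show the injection is a finite bijective morphism onto $\P^n$, hence birational of degree one, and conclude via Zariski's Main Theorem using the normality of $\P^n$. The additional details you spell out (forcing $s=n$, the reduction to the irreducible case, and the separability remark in characteristic zero) are all sound and merely make explicit what the paper's terser argument leaves implicit.
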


\begin{proof}
  Let $n := \dim X$ and assume that there is an injective morphism $\varphi \colon X \injTo \P^n$. Since $\varphi$ is proper and finite, the restriction of $\varphi$ to any $n$-dimensional irreducible component of $X$ has an $n$-dimensional image and is therefore surjective. By injectivity of $\varphi$, $X$ must be irreducible and $\varphi \colon X \to \P^n$ is bijective. Being a finite surjective morphism of degree~$1$, the morphism $\varphi$ is birational. Then normality of projective space implies that $\varphi$ is an isomorphism by Zariski's Main Theorem (as e.g.\ in \cite[Exercise~29.6.D]{Vak17}).
\end{proof}

\begin{remark} \label{rem:dimPlusOne}
  More generally, the proof of \cref{lem:dimPlusOne} shows that the image of an injection $\varphi \colon X \injTo \P^n$ is a normal variety if and only if $X$ is normal and $\varphi$ is an isomorphism.
\end{remark}

If $\Ell$ is a very ample line bundle, i.e., if $\varphi_\Ell \colon X \dashrightarrow \P(H^0(X,\Ell)^*)$ is a closed embedding, then $\injDim{X, \Ell} \leq h^0(X,\Ell)-1 < \infty$. Conversely, if $\injDim{X, \Ell} < \infty$, then $\Ell$ is a globally generated ample line bundle, since $\Ell$ is the pullback of $\O_{\P(H^0(X,\Ell)^*)}(1)$ under the injective (hence finite) morphism $\varphi_\Ell$. Therefore, we have the implications
  \begin{equation} \label{eq:lineBundleConditions}
    \Ell \text{ very ample} \quad \Rightarrow \quad \injDim{X, \Ell} < \infty \quad \Rightarrow \quad \Ell \text{ ample and globally generated}. 
  \end{equation}
Note that the argument for the second implication also shows that non-projective complete varieties cannot be injected to projective spaces due to the lack of ample line bundles. In other words, a complete variety admits an injective morphism to a projective space if and only if it admits an embedding into a projective space.

The reverse implications of \cref{eq:lineBundleConditions} are not true, as the following examples show.

\begin{ex} \label{ex:injNotVeryAmple}
  Consider the weighted projective space $X = \P(1,6,10,15)$. The morphism 
    \[\P(1,6,10,15) \to \P^4, \qquad
      [x_0:x_1:x_2:x_3] \mapsto [x_0^{30} : x_0^{24}x_1 : x_0^{20}x_2 + x_1^5 : x_0^{15}x_3 + x_2^3 : x_3^2]\]
  is defined by global sections of $\Ell = \O(30)$ and it is injective, as we will confirm in \cref{thm:weightedProjectiveConstr} below. Hence, $\injDim{X,\Ell} = 4$. On the other hand, we observe that $\Ell$ is not very ample: The polarized toric variety $(\P(1,6,10,15), \Ell)$ corresponds to the lattice polytope $P := \conv(0,5e_1,3e_2,2e_3) \subseteq \R^3$. The semigroup $S := \N(P \cap \Z^3 - 5e_1)$ is not saturated in $\Z^3$, because 
    \[(-6,2,1) = \frac{1}{2}((-2,1,0)+(-5,3,0)+(-5,0,2)) \in \big(\frac{1}{2} S \cap \Z^3\big) \setminus S.\]
  By \cite[Proposition~6.1.10]{CLS11}, this shows that $\Ell$ is not very ample. In fact, we discuss in \cref{ex:minNotVeryAmple} that 
  $\injDim{X,\Ell^{\otimes k}} \geq 6$ for all $k \geq 2$, highlighting that to attain the smallest possible injection dimension, one cannot restrict to very ample line bundles only.
\end{ex}

\begin{ex}
  Let $X$ be an elliptic curve and $p \in X$. Then $\Ell = \O_X(2p)$ determines a double cover $\varphi_\Ell \colon X \to \P(H^0(X,\Ell)^*) \cong \P^1$. The non-injectivity of this morphism implies $\injDim{X, \Ell} = \infty$. On the other hand, $\Ell$ is globally generated and ample.
\end{ex}

Injection dimensions are closely tied to the behaviour of secant loci, as we point out next. In fact, this is an instance of the relation between higher secant loci of varieties and the study of $k$-regular maps, see e.g.\ \cite{BJJM19}. In that context, injective maps to low-dimensional ambient spaces appear under the name “$2$-regular maps” and are an important first case of interest.

\begin{definition}
  Let $Y$ be a subvariety of $\P(V)$, where $V$ is a finite-dimensional vector space. The \defstyle{secant locus} of $Y$ in $\P(V)$, denoted $\sigma_2^{\circ}(Y)$ is the set
  \[\sigma_2^{\circ}(Y) := \bigcup_{p,q \in Y} \langle p,q \rangle \subseteq \P(V),\]
  where $\langle p, q \rangle \subseteq \P(V)$ denotes the linear subspace spanned by the points $p$ and $q$. Its closure in $\P(V)$ is the \defstyle{secant variety} of $Y \subseteq \P^m$ and is denoted $\sigma_2(Y)$.
\end{definition}

Injection dimensions have a straightforward reinterpretation in terms of the smallest codimension of a linear space avoiding a secant locus, based on the following classical observation:

\begin{lemma} \label{lem:secantAvoidance}
  Let $W \subseteq V$ be finite-dimensional vector spaces, let $Y \subseteq \P (V)$ be a subvariety and consider the linear space $L := \P(W)$. 
  The rational map $\pi \colon \P(V) \dashrightarrow \P(V/W)$ (i.e., the projection from $L$) restricts to an injective morphism $\restr{\pi}{Y} \colon Y \injTo \P(V/W)$ if and only if $L \cap \sigma_2^{\circ}(Y) = \emptyset$.
\end{lemma}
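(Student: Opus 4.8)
The plan is to reduce the statement to elementary linear algebra. The projection $\pi$ is defined precisely on $\P(V) \setminus L$, so before discussing injectivity I would first record that $\restr{\pi}{Y}$ is everywhere defined — i.e.\ is a morphism — if and only if $Y \subseteq \P(V)\setminus L$, that is, $Y \cap L = \emptyset$. Observing that $Y \subseteq \sigma_2^{\circ}(Y)$ (take $p = q$ in the defining union, where $\langle p,p\rangle = \{p\}$), the condition $L \cap \sigma_2^{\circ}(Y) = \emptyset$ already forces $Y \cap L = \emptyset$. Thus the "morphism" half of the equivalence is built into the right-hand side, and the real content is the interplay between injectivity on points and the secant lines $\langle p,q\rangle$ for distinct $p,q \in Y$.

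The crux is the following pointwise computation, which I would isolate. Fix distinct points $p = [v]$ and $q = [v']$ of $\P(V) \setminus L$, so that $v,v'$ are linearly independent and neither lies in $W$. Then $\pi(p) = \pi(q)$ in $\P(V/W)$ means $v' + W = \lambda(v + W)$ for some $\lambda \in \C^\times$, i.e.\ $v' - \lambda v \in W$; since $v,v'$ are independent, $v' - \lambda v \neq 0$, exhibiting a nonzero vector of $\langle v, v'\rangle \cap W$, which says exactly that the secant line $\langle p,q\rangle = \P(\langle v,v'\rangle)$ meets $L = \P(W)$. Conversely, if $\langle p,q\rangle$ meets $L$, there is a nonzero $w = \alpha v + \beta v' \in W$; here $\beta \neq 0$, for otherwise $\alpha v \in W$ would give $v \in W$ against $p \notin L$, and then $v' = \beta^{-1}w - \alpha\beta^{-1}v$ shows $v' - (-\alpha\beta^{-1})v \in W$, i.e.\ $\pi(p) = \pi(q)$. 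Hence, for distinct $p,q \notin L$, one has $\pi(p) = \pi(q)$ if and only if $\langle p,q\rangle \cap L \neq \emptyset$.

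With this in hand I would assemble both implications. For the forward direction, if $\restr{\pi}{Y}$ is an injective morphism, then it is defined on all of $Y$, forcing $Y \cap L = \emptyset$; and if some secant line $\langle p,q\rangle$ with distinct $p,q \in Y$ met $L$, the computation would give $\pi(p) = \pi(q)$, contradicting injectivity. Since $\sigma_2^{\circ}(Y)$ is the union of $Y$ with all these secant lines, we conclude $L \cap \sigma_2^{\circ}(Y) = \emptyset$. For the reverse direction, assume $L \cap \sigma_2^{\circ}(Y) = \emptyset$. Then $Y \cap L = \emptyset$, so $\restr{\pi}{Y}$ is a morphism; and if $\pi(p) = \pi(q)$ for distinct $p,q \in Y$, the computation would produce a secant line meeting $L$, again a contradiction, so $\restr{\pi}{Y}$ is injective.

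I do not expect a serious obstacle here: the lemma is classical and essentially linear-algebraic. The only points requiring care are keeping track of the hypothesis $p,q \notin L$ throughout the pointwise computation (it is precisely what forces $\beta \neq 0$ above, and it is supplied for free once $Y \cap L = \emptyset$), and treating the diagonal case $p = q$ correctly so that the single condition $L \cap \sigma_2^{\circ}(Y) = \emptyset$ simultaneously encodes both that $\restr{\pi}{Y}$ is a morphism and that it is injective.
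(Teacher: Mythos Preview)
Your proof is correct and follows essentially the same approach as the paper: both reduce to the elementary linear-algebra observation that for distinct $p,q \notin L$ one has $\pi(p)=\pi(q)$ precisely when the secant line $\langle p,q\rangle$ meets $L$, and both handle the ``morphism'' part via $Y\subseteq\sigma_2^\circ(Y)$. Your write-up is simply more detailed than the paper's compressed version.
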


\begin{proof}
  The projection from $L$ is a well-defined morphism on $Y$ if and only if $Y \cap L = \emptyset$. Let $y_1 \neq y_2 \in \P(V) \setminus L$. Then $\pi(y_1) = \pi(y_2)$ if and only if there are representatives $z_1, z_2 \in V \setminus \{0\}$ with $y_i = [z_i] \in \P(V)$ such that $z_1-z_2 \in W$. But 
  \[\{[z_1 - z_2] \in \P(V) \mid z_i \in V \setminus \{0\}, \ [z_i]=y_i\} = \langle y_1, y_2 \rangle.\]
  In particular, $\pi$ is well-defined and injective on $Y$ if and only if $\P(W) \cap \sigma_2^{\circ}(Y) = \emptyset$.
\end{proof}

\begin{prop} \label{prop:secantAvoidance}
  Let $X$ be a projective variety and let $\Ell$ be a line bundle on $X$ with $\injDim{X,\Ell} < \infty$. Let $Y$ be the image of the morphism $\varphi_\Ell \colon X \injTo \P(H^0(X,\Ell)^*)$. Then
    \[\injDim{X,\Ell} = \min\{\codim L - 1 \mid L \subset \P(H^0(X,\Ell)^*) \text{ linear subspace with } L \cap \sigma_2^{\circ}(Y) = \emptyset\}.\]
\end{prop}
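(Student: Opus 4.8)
Write $H := H^0(X,\Ell)$ and recall that $Y = \varphi_\Ell(X) \subseteq \P(H^*)$. My plan is to set up the standard annihilator duality between subspaces and projection centers, and then reduce the statement directly to \cref{lem:secantAvoidance}. For a subspace $V \subseteq H$, let $\Ann(V) := \{\xi \in H^* \mid \xi|_V = 0\}$ be its annihilator and set $L_V := \P(\Ann(V)) \subseteq \P(H^*)$. The double-annihilator identity $\Ann(\Ann(V)) = V$ shows that $V \mapsto L_V$ is an inclusion-reversing bijection between non-zero subspaces $V \subseteq H$ and proper linear subspaces $L \subsetneq \P(H^*)$, with the degenerate case $V = H$ corresponding to $L = \emptyset$. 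Since $\dim \Ann(V) = \dim H - \dim V$, one computes $\codim_{\P(H^*)} L_V = \dim V$, so $\dim V - 1 = \codim L_V - 1$; this already matches the two quantities being minimized on either side.

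Next I would identify the rational map $\varphi_V$ with a linear projection. By construction $\varphi_V$ is the composition of $\varphi_\Ell \colon X \to \P(H^*)$ with the projection $\P(H^*) \dashrightarrow \P(V^*)$. Under the isomorphism $V^* \cong H^*/\Ann(V)$ dual to the inclusion $V \hookrightarrow H$, this projection is exactly the projection $\pi_{L_V} \colon \P(H^*) \dashrightarrow \P(H^*/\Ann(V))$ away from the center $L_V$. Because $\injDim{X,\Ell} < \infty$, the bundle $\Ell$ is globally generated, so $\varphi_\Ell$ is a morphism; by hypothesis it is injective with image $Y$, hence $\varphi_\Ell \colon X \to Y$ is a bijection. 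It follows that $\varphi_V = \pi_{L_V} \circ \varphi_\Ell$ is an injective morphism on $X$ if and only if $\pi_{L_V}$ restricts to an injective morphism on $Y$.

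Applying \cref{lem:secantAvoidance} with $W = \Ann(V)$ then shows that $\pi_{L_V}|_Y$ is an injective morphism precisely when $L_V \cap \sigma_2^{\circ}(Y) = \emptyset$. The everywhere-definedness built into being a morphism is automatic here: the diagonal contributions $\langle p, p \rangle = \{p\}$ give $Y \subseteq \sigma_2^{\circ}(Y)$, so $L_V \cap \sigma_2^{\circ}(Y) = \emptyset$ already forces $L_V \cap Y = \emptyset$. Combining the three steps, the bijection $V \leftrightarrow L_V$ restricts to a bijection between the non-zero subspaces $V$ for which $\varphi_V$ is an injective morphism and the linear subspaces $L$ with $L \cap \sigma_2^{\circ}(Y) = \emptyset$, compatibly matching $\dim V - 1$ with $\codim L - 1$; taking minima over both sides yields the asserted equality. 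Both index sets are non-empty, since the choice $V = H$ (equivalently $L = \emptyset$) always works because $\varphi_\Ell$ is injective, so the infimum defining $\injDim{X,\Ell}$ is attained and is a genuine minimum.

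The only genuinely delicate points are the bookkeeping of the $\P(-)$-conventions and annihilators — in particular verifying that the projection $\P(H^*) \dashrightarrow \P(V^*)$ defining $\varphi_V$ really has center $L_V = \P(\Ann(V))$ and that the dimension count comes out as $\codim L_V = \dim V$ — together with the transfer of injectivity from $\varphi_V$ on $X$ to $\pi_{L_V}$ on $Y$ via the bijection $\varphi_\Ell \colon X \to Y$. Everything else is formal once \cref{lem:secantAvoidance} is in hand.
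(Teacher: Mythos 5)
Your proposal is correct and follows essentially the same route as the paper: identify $W = \Ann(V)$ with the kernel of $H^0(X,\Ell)^* \twoheadrightarrow V^*$, recognize $\varphi_V$ as the projection from $L_V = \P(W)$ composed with the bijection $\varphi_\Ell \colon X \to Y$, invoke \cref{lem:secantAvoidance}, and match dimensions via $\dim V = \codim W$. The extra care you take with the annihilator duality and with $Y \subseteq \sigma_2^{\circ}(Y)$ forcing well-definedness is all implicit in the paper's shorter argument.
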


\begin{proof}
  Let $V \subseteq H^0(X,\Ell)$ be a subspace and let $W := \ker(H^0(X,\Ell)^* \twoheadrightarrow V^*)$. %
  The projection $\pi \colon \P(H^0(X,\Ell)^*) \dashrightarrow \P(H^0(X,\Ell)^*/W) \cong \P(V^*)$ is the rational map $\varphi_V$ and, by \cref{lem:secantAvoidance}, it is an injective morphism on $Y$ if and only if $W \cap \sigma_2^{\circ}(Y) = \emptyset$. With the observation that $\dim V = \codim W$, this proves the claim.
\end{proof}

In particular, this gives the following folklore result which -- contrary to the study of closed embeddings -- does not require smoothness of the variety.

\begin{cor} \label{cor:TwiceDimPlusOne}
  Let $X$ be a projective variety. For any line bundle $\Ell$ with $\injDim{X,\Ell} < \infty$, we have $\injDim{X, \Ell} \leq 2 \dim X + 1$.
\end{cor}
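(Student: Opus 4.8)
The plan is to apply \cref{prop:secantAvoidance} and reduce the problem to a dimension count for the secant variety. Since $\injDim{X,\Ell} < \infty$, the implications in \eqref{eq:lineBundleConditions} show that $\Ell$ is globally generated, so $\varphi_\Ell$ is a genuine morphism. If $V \subseteq H^0(X,\Ell)$ is a subspace realizing an injection, then $\varphi_V = \pi \circ \varphi_\Ell$ for the projection $\pi$, and injectivity of $\varphi_V$ forces $\varphi_\Ell \colon X \to \P(H^0(X,\Ell)^*)$ itself to be injective: if $\varphi_\Ell(x_1) = \varphi_\Ell(x_2)$, then $\varphi_V(x_1) = \varphi_V(x_2)$, hence $x_1 = x_2$. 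Being a proper injective morphism, $\varphi_\Ell$ is finite, so its image $Y$ satisfies $\dim Y = \dim X =: n$. By \cref{prop:secantAvoidance} it then suffices to exhibit a linear subspace $L \subseteq \P(H^0(X,\Ell)^*)$ of codimension at most $2n+2$ with $L \cap \sigma_2^{\circ}(Y) = \emptyset$.

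The key estimate is $\dim \sigma_2(Y) \leq 2n+1$. I would obtain this from the standard parametrization of secant lines: the incidence set $\{(p,q,r) \mid r \in \langle p, q\rangle\} \subseteq Y \times Y \times \P(H^0(X,\Ell)^*)$ has dimension $2n+1$, since a choice of two points on $Y$ contributes $2n$ and the point $r$ on the spanned line contributes one more, and $\sigma_2(Y)$ is the closure of its image under projection to the last factor. Thus $\sigma_2^{\circ}(Y) \subseteq \sigma_2(Y)$ lies in a closed subset of $\P(H^0(X,\Ell)^*)$ of dimension at most $2n+1$.

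Write $N := \dim \P(H^0(X,\Ell)^*)$. If $N \leq 2n+1$, then already $\injDim{X,\Ell} \leq N \leq 2n+1$ by taking $V = H^0(X,\Ell)$, and there is nothing more to prove. Otherwise $\dim \sigma_2(Y) \leq 2n+1 < N$, so a general linear subspace $L$ of dimension $N - \dim \sigma_2(Y) - 1 \geq 0$ meets $\sigma_2(Y)$ in expected dimension $\dim L + \dim \sigma_2(Y) - N = -1$, i.e.\ is disjoint from it, and therefore from $\sigma_2^{\circ}(Y)$. For this $L$ we have $\codim L = \dim \sigma_2(Y) + 1 \leq 2n+2$, and \cref{prop:secantAvoidance} gives $\injDim{X,\Ell} \leq \codim L - 1 \leq 2n+1$.

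The one point to pin down carefully is the genericity statement that a general linear subspace of complementary dimension avoids a fixed proper subvariety, namely the usual fact that a general linear space of dimension $d$ and a subvariety of dimension $e$ in $\P^N$ are disjoint once $d + e < N$. The secant-dimension bound itself is elementary once the incidence variety is set up, so I expect no serious obstacle here beyond correctly bookkeeping the degenerate case $N \leq 2n+1$.
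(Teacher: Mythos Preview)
Your proposal is correct and follows essentially the same route as the paper: both use \cref{prop:secantAvoidance}, the bound $\dim \sigma_2(Y) \leq 2\dim X + 1$, and the fact that a general linear subspace of codimension $2\dim X + 2$ avoids $\sigma_2(Y)$. Your version is simply more explicit in justifying why $\varphi_\Ell$ is injective and in handling the degenerate case $N \leq 2n+1$, whereas the paper treats these as understood.
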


\begin{proof}
  Note that $Y := \varphi_\Ell(X)$ %
  has the same dimension as $X$, since $\varphi_\Ell$ is injective. Since $\dim \sigma_2(Y) \leq 2 \dim Y + 1 = 2\dim X + 1$, a general linear subspace of $\P(H^0(X, \Ell)^*)$ of codimension $2 \dim X + 2$ does not meet $\sigma_2(Y)$, and in particular it avoids the secant locus of $Y$. The bound then follows from \cref{prop:secantAvoidance}.
\end{proof}

A brave generalization of \cref{qu:injectIntoTwiceDim} would be to ask whether $\injDim{X,\Ell} \leq 2\dim X$ holds for every very ample line bundle $\Ell$. This is not the case: As shown in \cite{Pie81}, a general canonical curve $X \subseteq \P^3$ of genus~$4$ does not admit a cuspidal projection to $\P^2$. By \cref{lem:secantAvoidance}, this means that $\injDim{X,\omega_X} = 3 > 2 \dim X$. On the other hand, it is conjectured in \cite[Conjecture~4.9]{DJ16} that for products of projective spaces, one does have $\injDim{\P^{n_1} \times \ldots \times \P^{n_r}, \O(d_1,\ldots,d_r)} \leq 2 (\sum_{i=1}^r n_i)$ for all $d_1,\ldots,d_r > 0$. A very simple case illustrating \cref{prop:secantAvoidance} is the following example.

\begin{ex}
  Let $X = \P^1 \times \P^1 \times \P^1$ and consider the very ample line bundle $\Ell = \O(1,1,1)$, corresponding to the Segre embedding
  \begin{align*}
    \varphi_\Ell \colon \P^1 \times \P^1 \times \P^1 = X &\hookrightarrow \P(H^0(X, \Ell)^*) \cong \P^7, \\
    [x_0:x_1] \times [y_0:y_1] \times [z_0:z_1] &\mapsto [x_i y_j z_k \mid i,j,k \in \{0,1\}]
  \end{align*}
  Here, the secant variety of $\varphi(X)$ fills the entire $7$-dimensional ambient space, but the secant locus $\sigma_2^{\circ}(\varphi(X))$ does not. For example, one can check that
    \[p := [(x_0 y_0 z_1)^* + (x_0 y_1 z_0)^* + (x_1 y_0 z_0)^*] \in \P(H^0(X, \Ell)^*) \setminus \sigma_2^{\circ}(\varphi_\Ell(X)).\]
  Hence, $\injDim{X,\Ell} \leq 6$ by \cref{prop:secantAvoidance} and an injection $\P^1 \times \P^1 \times \P^1 \injTo \P^6$ is obtained by projecting from the point $p$. Explicitly,
  \begin{align*}
    \P^1 \times \P^1 \times \P^1 &\injTo{} \P^6, \\
    \text{\scriptsize $[x_0:x_1] \times [y_0:y_1] \times [z_0:z_1]$} &{}\mapsto{} \text{\scriptsize $[x_0 y_0 z_0 : x_0 y_0 z_1 - x_0 y_1 z_0 : x_0 y_0 z_1 - x_1 y_0 z_0 : x_0 y_1 z_1 : x_1 y_0 z_1 : x_1 y_1 z_1]$.}
  \end{align*}
  In fact, one can check that $\P^7 \setminus \sigma_2^{\circ}(\varphi(X))$ does not contain any line, so by \cref{prop:secantAvoidance} there cannot exist an injection of $\P^1 \times \P^1 \times \P^1$ into $\P^5$ given by multilinear forms, showing $\injDim{\P^1 \times \P^1 \times \P^1, \O(1,1,1)} = 6$. We generalize this example in \cref{cor:P1P1Pm}, constructing an injective morphism $\P^1 \times \P^1 \times \P^m \injTo \P^{2(m+2)}$ for all $m \geq 1$.
\end{ex}

By \cref{prop:secantAvoidance}, the injection dimension $\injDim{X, \Ell}$ is determined by the largest-dimensional linear space avoiding the secant locus of $\varphi_\Ell(X)$. A natural question is whether “largest-dimensional” can be replaced by “maximal with respect to inclusion”:

\begin{question} \label{qu:Naive}
  Let $X$ be a projective variety, $\Ell$ a line bundle on $X$ and $V \subseteq H^0(X, \Ell)$ a subspace such that $\varphi_V$ is a closed embedding. 
  Does there exist a subspace $W \subseteq V$ of dimension $\injDim{X, \Ell}+1$ such that $\varphi_W$ is an injective morphism?
\end{question}

This question was raised in \cite{DJ16} and it was observed that a positive answer to it would show $\injDim{X,\Ell} \leq 2 \dim X$ whenever $X$ is smooth and $\Ell$ is a line bundle with $\sigma_2^{\circ}(\varphi_\Ell(X)) \neq \sigma_2(\varphi_\Ell(X))$. However, the following example gives a negative answer to \cref{qu:Naive}.

\begin{ex}
  Let $X = \P^1$, $\Ell = \O_{\P^1}(5)$. The subspace $V \subseteq H^0(X, \Ell)$ spanned by $f_0 := x_0^5$, $f_1 := x_0^4 x_1 + x_0^3 x_1^2$, $f_2 := x_0^2 x_1^3 + x_0 x_1^4$ and $f_3 := x_1^5$ defines a closed embedding
    \[\varphi_V \colon \P^1 = X \hookrightarrow \P(V^*) \cong \P^3, \qquad [x_0:x_1] \mapsto [f_0:f_1:f_2:f_3],\]
  describing a rational quintic space curve $C \subseteq \P^3$. %
  One can algorithmically confirm that every point in $\P^3$ lies on a secant line of $C$, i.e., $\sigma_2^{\circ}(C) = \P^3$. By \cref{lem:secantAvoidance}, this implies that for any $W \subsetneq V$, the projection $\P(V^*) \dashrightarrow \P(W^*)$ cannot be injective on $C$, hence $\varphi_{W}$ is not injective. Geometrically, this means that $C$ does not admit a cuspidal projection. On the other hand, $\injDim{\P^1,\O_{\P^1}(5)} = 2$ because of the injective morphism
    \[\P^1 \injTo \P^2, \qquad [x_0:x_1] \mapsto [x_0^5:x_0^4 x_1:x_1^5].\]
\end{ex}

\medskip
Throughout, the following elementary connection between injections in projective and affine settings will repeatedly come up:

\begin{lemma} \label{lem:injectiveOnAffineCones}
  Let $S = \bigoplus_{d \geq 0} S_d$ be a finitely generated graded algebra over $S_0 = \C$
  and let $V \subseteq S_1$ be a subspace. The rational map $\varphi_V \colon \Proj S \dashrightarrow \P(V^*)$ is an injective morphism if and only if the morphism of affine cones $\widehat{\varphi}_V \colon \Spec S \to V^*$ is injective.
\end{lemma}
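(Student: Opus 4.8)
The plan is to exploit the $\C^*$-action on the affine cone coming from the grading and to realize both maps as the two floors of a single equivariant square. Write $\mathbf{0} \in \Spec S$ for the vertex, i.e.\ the closed point cut out by the irrelevant ideal $S_+ = \bigoplus_{d>0} S_d$ (a maximal ideal since $S_0 = \C$), and let $\C^*$ act on $\Spec S$ through the grading, so that $g(t\cdot x) = t^{\deg g}\, g(x)$ for homogeneous $g$. Giving $V^* = \Spec \Sym^\bullet V$ the scaling action of weight~$1$, the map $\widehat{\varphi}_V$ is induced by the graded algebra homomorphism $\Sym^\bullet V \to S$ extending the inclusion $V \hookrightarrow S$, and is therefore a genuine morphism defined on all of $\Spec S$ and $\C^*$-equivariant: fixing a basis $f_0,\dots,f_s$ of $V$, we have $\widehat{\varphi}_V(t\cdot x) = t\cdot\widehat{\varphi}_V(x)$ because each $f_i$ has degree~$1$. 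The key structural input I will use is that the canonical map $\pi\colon \Spec S \setminus\{\mathbf{0}\} \to \Proj S$ exhibits $\Proj S$ as the geometric quotient by $\C^*$; concretely, $\pi$ is surjective on closed points, and two closed points of the punctured cone have the same image under $\pi$ if and only if they lie in a single $\C^*$-orbit. On the punctured cone one then has $\varphi_V\circ\pi = [\,\cdot\,]\circ\widehat{\varphi}_V$, where $[\,\cdot\,]\colon V^*\setminus\{0\}\to\P(V^*)$ is projectivization, so that $\varphi_V$ and $\widehat{\varphi}_V$ sit as the bottom and top of a $\C^*$-equivariant commutative square whose vertical maps are the two orbit quotients.

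First I would dispose of the ``morphism'' half of the statement by comparing base loci. The fibre $\widehat{\varphi}_V^{-1}(0)$ is exactly the zero locus of the ideal $V\cdot S$, which always contains $\mathbf{0}$; by the Nullstellensatz, $\widehat{\varphi}_V^{-1}(0) = \{\mathbf{0}\}$ holds if and only if $\sqrt{V\cdot S} = S_+$, which is precisely the statement that the base locus of $V$ in $\Proj S$ is empty, i.e.\ that $\varphi_V$ is a morphism. Thus, if $\widehat{\varphi}_V$ is injective its fibre over $0$ is a single point, forcing $\varphi_V$ to be a morphism; and conversely, if $\varphi_V$ is a morphism then $\widehat{\varphi}_V^{-1}(0)=\{\mathbf{0}\}$. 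In either direction I may therefore assume $\varphi_V$ is base-point free and that the punctured cone maps into $V^*\setminus\{0\}$, which is exactly what makes the commutative square above literally commute.

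It then remains to match the two injectivity statements using the square, and here the equivariance does all the work. For the implication ``$\widehat{\varphi}_V$ injective $\Rightarrow$ $\varphi_V$ injective'': given closed points with $\varphi_V([x])=\varphi_V([y])$, lift to $x,y$ in the punctured cone via $\pi$; then the nonzero vectors $\widehat{\varphi}_V(x)$ and $\widehat{\varphi}_V(y)$ agree up to a scalar $\lambda\in\C^*$, so by equivariance $\widehat{\varphi}_V(x)=\widehat{\varphi}_V(\lambda\cdot y)$, whence $x=\lambda\cdot y$ by injectivity and $[x]=[y]$. For the converse, take closed points $x,y\in\Spec S$ with $\widehat{\varphi}_V(x)=\widehat{\varphi}_V(y)=v$. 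If $v=0$ then both lie in $\widehat{\varphi}_V^{-1}(0)=\{\mathbf{0}\}$, so $x=y=\mathbf{0}$; if $v\neq 0$ then $x,y\neq\mathbf{0}$ and $\varphi_V([x])=\varphi_V([y])$, so injectivity of $\varphi_V$ gives $[x]=[y]$, i.e.\ $y=\mu\cdot x$ for some $\mu\in\C^*$, and applying $\widehat{\varphi}_V$ yields $v=\mu\cdot v$, forcing $\mu=1$ and $x=y$. The one step needing genuine care---and the only place where $S$ not being generated in degree~$1$ could cause worry---is the structural input that $\pi$ is surjective with $\C^*$-orbits as fibres on closed points; I would either cite this as the standard description of $\Proj S$ as a $\C^*$-quotient of its punctured affine cone, or verify directly that $\mathbf{0}$ is the unique $\C^*$-fixed point (so that orbits in the punctured cone are closed) and that the homogeneous prime $\pi(x)$ recovers the orbit of $x$.
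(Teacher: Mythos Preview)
Your proof is correct and follows essentially the same approach as the paper: both set up the graded homomorphism $\Sym^\bullet V \to S$, identify the condition ``$\varphi_V$ is a morphism'' with $\widehat{\varphi}_V^{-1}(0)=\{o\}$, and then use the $\C^*$-equivariant commutative square relating the punctured cone to $\Proj S$ via the geometric quotient. Your version is more explicit (spelling out the diagram chase for both implications and flagging the point about $\pi$ being a $\C^*$-orbit quotient when $S$ need not be generated in degree~$1$), whereas the paper compresses the final step into a single sentence invoking equivariance.
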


\begin{proof}
  The graded ring homomorphism $\psi \colon \Sym^\bullet V \hookrightarrow \Sym^\bullet S_1 \to S$ induces the rational map $\varphi_V \colon \Proj S \dashrightarrow \Proj \, \Sym^\bullet V = \P(V^*)$ as well as the morphism between the affine cones $\widehat{\varphi}_V \colon \Spec S \to \Spec \, \Sym^\bullet V = V^*$. Note that $\varphi_V$ is a morphism if and only if $\widehat{\varphi}_V^{-1}(0)$ consists only of the closed point $o \in \Spec S$ corresponding to the maximal ideal $S_{\geq 1}$. In that case, we have the commutative diagram
  \[\begin{tikzcd}
  \Spec S \setminus \{o\}
  \arrow{r}[swap]{\restr{\widehat{\varphi}_V}{\Spec S \setminus \{o\}}}
  \arrow{d}{/\C^*}
  &[5em] V^* \setminus \{0\}
  \arrow{d}{/\C^*} \\
  \Proj S \arrow{r}{\varphi_V}
  &\P(V^*),
  \end{tikzcd}\]
  where the vertical morphisms are geometric quotients for the $\C^*$-actions induced by the gradings of $S$ and $\Sym^\bullet V$. Since $\psi$ is degree-preserving, $\widehat{\varphi}_V$ is $\C^*$-equivariant, hence $\varphi_V \colon \Proj S \dashrightarrow \P(V^*)$ is an injective morphism if and only if $\widehat{\varphi}_V$ is injective.
\end{proof}

\subsection{Graded separating invariants}

Classical Invariant Theory revolves around the problem of describing generators (and their relations) of invariant rings for group actions on vector spaces or, more generally, on varieties. However, generating sets of invariant rings tend to be very large (possibly infinite) and hard to explicitly construct. The study of \emph{rational invariants}, i.e., generators for quotient fields of invariant rings, is often simpler to carry out \cite{CTS07, HK07}, but in some applications describing only the generic behavior of the group action can be insufficient. An intermediate approach between these two extremes is the more recent field of study of \emph{separating invariants} \cite[§2.4]{DK15}, \cite{Kem09}, which maintain the full geometric information about orbit separation while remedying many of the complications of complete generating sets of invariants \cite{Dom07,DKW08,NS09}. Separating invariants are of major importance for applications, as for example in the recent work \cite{CCC19}, see \cite[§5]{DK15} for an overview of possible application.

Here, we highlight the close connection between separating invariants and injection dimensions of projective varieties. We focus on the most classical situation: separating invariants for linear actions of reductive algebraic groups on finite-dimensional vector spaces.

\begin{definition} \label{def:sepSet}
  A \defstyle{separating set of invariants} for a finite-dimensional representation $V$ of a group $G$ is a set of invariant polynomials $F \subseteq \C[V^*]^G$ such that for all points $v,w \in V$ the following equivalence holds:
  \[f(v) = f(w) \text{ for all } f \in F \quad \Leftrightarrow \quad f(v) = f(w) \text{ for all } f \in \C[V^*]^G.\]
\end{definition}

Equivalently, in the case of a reductive algebraic group $G$, a finite set of invariants $F = \{f_1,\ldots,f_s\} \subseteq \C[V^*]^G$ is separating if and only if the morphism $\gitFrac{V}{G} = \Spec \C[V^*]^G \to \A^s$ given by $(f_1,\ldots,f_s)$ is injective. This means that in the \emph{affine setting}, there is an immediate translation between injective morphism to affine spaces and separating sets of invariants, whenever the coordinate ring of an affine variety has a description as an invariant ring -- the difference being rather a change of language.

In this article, we look at the \emph{projective setting}: We study injective morphism from projective varieties to projective spaces. Here, the corresponding translation to the world of separating invariants is more subtle and we dedicate the remainder of this section to carefully working it out in detail.

\smallskip
Often, small separating sets of invariants are obtained in two steps: (1) identify a large separating set, (2) form a smaller separating set by taking suitable linear combinations. This is closely related to injections of projective varieties in the situation that the separating set in (1) consists of homogeneous polynomials satisfying homogeneous relations.

\begin{definition} \label{def:gradedSepSet}
  Let $V$ be a finite-dimensional representation of a group $G$.
  We call a finite separating set of invariants $F = \{f_1,\ldots, f_s\} \subseteq \C[V^*]^G$ \defstyle{graded} if each $f_i \in \C[V^*]^G$ is a homogeneous polynomial and its ideal of relations
  \[\ker\big(\C[z_1,\ldots,z_s] \to \C[V^*]^G, \quad z_i \mapsto f_i\big) \subseteq \C[z_1,\ldots,z_s]\]
  is homogeneous.
\end{definition}

Equivalently, a finite separating set $F = \{f_1,\ldots, f_s\}$ of homogeneous polynomials is graded if and only if the separating algebra $S := \C[F]$ can be given a grading $S = \bigoplus_{d \geq 0} S_d$ with $F \subseteq S_1$. We want to emphasize that this grading, induced by the homomorphism $\C[z_1,\ldots,z_s] \twoheadrightarrow \C[F]$, does typically \emph{not} agree with the natural grading of $\C[F]$ as a graded subalgebra of the polynomial ring $\C[V^*]$. See \cref{ex:gradingsOfSepSets} below.

For convenience, we formulated \cref{def:gradedSepSet} for finite separating sets, but note that this is not a restriction: A set $F \subseteq \C[V^*]^G$ spanning a finite-dimensional vector space $\langle F \rangle \subseteq \C[V^*]^G$ is a separating set if and only if a basis of $\langle F \rangle$ is.

\begin{ex} \label{ex:gradingsOfSepSets}
  The action of $\Z_6 = \{\xi \in \C^* \mid \xi^6 = 1\}$ on $V = \C^4$ given by
  \[\Z_6 \to \GL(5,\C), \qquad \xi \mapsto \diag(\xi^2, \xi^2, \xi^3, \xi^3).\]
  gives an invariant ring $\C[V^*]^{\Z_6}$ generated by the $7$~invariant homogeneous polynomials
  \[F := \{f_i := x_1^i x_2^{3-i}, \ g_j := x_3^j x_4^{2-j} \mid i \in \{0,1,2,3\}, \ j \in \{0,1,2\}\}.\]
  Their ideal of relations is homogeneous, generated by four quadratic binomials, so $F$ is a graded separating set of invariants. Note that there are two different gradings on $\C[F]$: With respect to the grading induced from $\C[V^*]$,  we have $\deg f_i = 3$ and $\deg g_j = 2$. On the other hand, with respect to the grading induced by $\C[z_1,\ldots,z_7] \twoheadrightarrow \C[F]$,  every element of $F$ is homogeneous of degree~$1$. A separating set of smallest cardinality obtained by linear combinations from elements in $F$ is $E := \{f_0,f_1,f_2+f_3, g_0, g_1, g_2\}$, see \cref{ex:wpsBoundNotSharp}.
\end{ex}

\begin{prop} \label{prop:connectionToSepInvSubsystem}
  Let $V$ be a finite-dimensional representation of a reductive algebraic group $G$. Let $F = \{f_1,\ldots,f_m\} \subseteq \C[V^*]^G$ be a graded separating set with ideal of relations $\mathfrak a_F := \ker(\C[z_1,\ldots,z_m] \twoheadrightarrow \C[F])$. Let $s \leq m-1$ be minimal such that the projective variety $V(\mathfrak a_F) \subseteq \P^{m-1}$ can be injected to $\P^s$ by a linear projection $\P^{m-1} \dashrightarrow \P^s$. Then 
    \[s = \min\{|E|-1 \mid E \subseteq \langle F \rangle \text{ separating set}\}.\]
  In particular, a lower bound for the size of a separating set obtained by linear combinations of $f_1,\ldots,f_m$ is given by $1+\injDim{\Proj \C[F], \O(1)}$, where we consider $\C[F]$ with the grading induced by $\C[z_1,\ldots,z_m] \twoheadrightarrow \C[F]$.
\end{prop}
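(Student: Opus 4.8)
The plan is to recast both sides of the claimed equality through the injection dimension of $\Proj S$, where $S := \C[F]$ carries the grading induced by $\C[z_1,\ldots,z_m] \twoheadrightarrow \C[F]$, so that $S_1 = \langle F\rangle$ and $V(\mathfrak a_F) = \Proj S \subseteq \P^{m-1} = \Proj\C[z_1,\ldots,z_m]$, embedded by the spanning set $F$ of $S_1$. A linear projection $\P^{m-1}\dashrightarrow\P^s$ is given by linear forms $\ell_0,\ldots,\ell_s$ in $z_1,\ldots,z_m$, and its restriction to $\Proj S$ is the map $\varphi_{V'}$ attached to the subspace $V' := \langle \ell_0(f_\bullet),\ldots,\ell_s(f_\bullet)\rangle \subseteq S_1$; conversely every subspace of $S_1$ arises so. Thus $s = \injDim{\Proj S, \O(1)}$ straight from the definition of injection dimension. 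On the other side, a finite set $E$ is separating if and only if its span is (by the remark after \cref{def:gradedSepSet}, a spanning set of a subspace is separating exactly when a basis of that subspace is), so $\min\{|E|-1 \mid E \subseteq \langle F\rangle \text{ separating}\} = \min\{\dim E'-1 \mid E' \subseteq S_1 \text{ a separating subspace}\}$. It therefore suffices to show, for every subspace $V' \subseteq S_1$, that $\varphi_{V'}\colon \Proj S \dashrightarrow \P(V'^*)$ is an injective morphism if and only if $V'$ is separating.

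By \cref{lem:injectiveOnAffineCones}, $\varphi_{V'}$ is an injective morphism if and only if the induced morphism of affine cones $\widehat\varphi_{V'}\colon \Spec S \to V'^*$ is injective. By the characterization of separating sets recalled after \cref{def:sepSet}, a basis $e_1,\ldots,e_t$ of $V'$ is separating if and only if the morphism $\gitFrac{V}{G} = \Spec\C[V^*]^G \to V'^*$ given by $(e_1,\ldots,e_t)$ is injective. These two morphisms sit in a factorization
\[\gitFrac{V}{G} = \Spec\C[V^*]^G \xrightarrow{\ \rho\ } \Spec\C[F] = \Spec S \xrightarrow{\ \widehat\varphi_{V'}\ } V'^*,\]
with $\rho$ induced by the inclusion $\C[F] \hookrightarrow \C[V^*]^G$ and with composite equal to the separating-set morphism. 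So the desired equivalence reduces to the assertion that $\widehat\varphi_{V'}$ is injective if and only if $\widehat\varphi_{V'}\circ\rho$ is, and this is immediate once $\rho$ is known to be a bijection on closed points (injectivity on the image of $\rho$, i.e.\ on all of $\Spec S$, being the same as injectivity of the composite).

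The crux is the bijectivity of $\rho$. Injectivity is exactly the separating hypothesis: two points of $\gitFrac{V}{G}$ coincide iff all invariants agree on (any lifts of) them, while they share a $\rho$-image iff all elements of $F$ agree, and these conditions coincide because $F$ separates. For surjectivity I would show that $\C[V^*]^G$ is module-finite over $\C[F]$, working with the standard grading inherited from $\C[V^*]$ (a different grading from the one above, cf.\ \cref{ex:gradingsOfSepSets}, but integrality is intrinsic to the ring extension). With respect to this grading both rings are finitely generated, nonnegatively graded with degree-zero part $\C$, so the extension is module-finite precisely when the common zero loci in $V$ of the positive-degree parts of $\C[F]$ and of $\C[V^*]^G$ agree. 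They do: one inclusion is forced by $\C[F] \subseteq \C[V^*]^G$, and if $v \in V$ kills every positive-degree element of $F$ then $f(v) = 0 = f(0)$ for all $f \in F$, whence by separation $g(v) = 0$ for every positive-degree $g \in \C[V^*]^G$. Hence $\rho$ is finite, so closed and dominant, so surjective. Feeding injectivity and surjectivity of $\rho$ into the factorization yields the claimed equivalence, the equality $s = \min\{|E|-1\}$, and with it the ``in particular'' as the identity $\min_E |E| = 1 + \injDim{\Proj\C[F], \O(1)}$. The main obstacle is exactly this surjectivity — i.e.\ integrality of $\C[V^*]^G$ over the separating subalgebra $\C[F]$ — and it is precisely here that reductivity of $G$ enters, through finite generation of $\C[V^*]^G$ and the separation of closed orbits by invariants.
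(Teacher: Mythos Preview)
Your argument is correct and follows essentially the same route as the paper: both reduce to showing that a subspace $V' \subseteq S_1 = \langle F\rangle$ is separating if and only if $\varphi_{V'}$ is an injective morphism, via the factorization through $\rho\colon \Spec\C[V^*]^G \to \Spec\C[F]$ and bijectivity of $\rho$. The one substantive difference is in surjectivity of $\rho$: the paper invokes \cite[Theorem~2.4.6]{DK15}, which identifies $\rho$ with the normalization map (using that $F$ consists of homogeneous polynomials), whereas you give a self-contained finiteness argument via the graded zero-locus criterion together with the separating hypothesis. Your version is more elementary and makes the use of reductivity (finite generation of $\C[V^*]^G$ and surjectivity of $V\to V/\!\!/G$) explicit.

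One small slip: the assertion $s = \injDim{\Proj S,\O(1)}$ is not justified, since the injection dimension minimizes over subspaces of $H^0(\Proj S,\O(1))$, which may strictly contain $S_1$ when $S=\C[F]$ is not normal. What your argument actually establishes is that $s$ and $\min\{|E|-1\}$ both equal $\min\{\dim V'-1 \mid V' \subseteq S_1,\ \varphi_{V'}\text{ an injective morphism}\}$; this yields the main equality, and the ``in particular'' then follows only as the inequality $\min_E|E| \geq 1+\injDim{\Proj\C[F],\O(1)}$ (which is exactly what the proposition claims), not as the identity you state at the end.
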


\begin{proof}
  Replacing $F$ with a linearly independent subset, we reduce to the case that $F$ is a basis for $\langle F \rangle \subseteq \C[V^*]^G$. Let $E = \{g_1,\ldots,g_k\} \subseteq \langle F \rangle$ be a finite set of linear combinations of $f_1,\ldots,f_m$ with $\dim\, \langle E \rangle = k$. Consider the morphism
  \[\psi \colon \gitFrac{V}{G} = \Spec \C[V^*]^G \to \Spec \C[F] \to \Spec \C[E] \xhookrightarrow{(g_1,\ldots,g_k)} \A^k\]
  and note that $E$ is a separating set if and only if $\psi$ is injective on the set-theoretic image of the quotient morphism $V \to \gitFrac{V}{G}$. For reductive groups, the latter quotient morphism is surjective, so $E$ is separating if and only if $\psi$ is injective. 
  
  Since $F$ is a separating set, this means in particular that $\Spec \C[V^*]^G \to \Spec \C[F]$ is injective.  By \cite[Theorem~2.4.6]{DK15}, the assumption that $F$ consists of homogeneous polynomials (with respect to the grading of $\C[V^*]$) implies that this morphism is the normalization of $\Spec \C[F]$ and hence also surjective. In particular, $\psi$ is injective if and only if $\widehat{\varphi}_{\langle E \rangle} \colon \Spec \C[F] \xrightarrowinline{(g_1,\ldots,g_k)} \A^k$ is injective.
  
  Considering the grading $\C[F] = \bigoplus_{d \geq 0} S_d$ induced by $\C[z_1,\ldots,z_m] \twoheadrightarrow \C[F]$, we have $\Proj \C[F] \cong V(\mathfrak a_F) \subseteq \P^{m-1}$ and $E \subseteq S_1$. Then the above $\widehat{\varphi}_{\langle E \rangle}$ is the morphism of affine cones over the rational map $\varphi_{\langle E \rangle} \colon \Proj \C[F] \dashrightarrow \P(\langle E \rangle^*) = \P^{k-1}$ given by $g_1,\ldots,g_k \in S_1 \subseteq H^0(\Proj \C[F], \O(1))$. By \cref{lem:injectiveOnAffineCones}, injectivity of $\widehat{\varphi}_{\langle E \rangle}$ is equivalent to $\varphi_{\langle E \rangle}$ being an injective morphism. To sum up, a linearly independent set $E \subseteq \langle F \rangle$ is separating if and only if 
    \[\varphi_{\langle E \rangle} \colon \Proj \C[F] \cong V(\mathfrak a_F) \subseteq \P^{m-1} = \P(\langle F \rangle^*) \dashrightarrow \P(\langle E \rangle^*)\]
  is an injective morphism, proving the claim.
\end{proof}

A classical fact about separating invariants \cite[Proposition~5.1.1]{Duf08} is that there always exists a separating set of size $2\dim \C[V^*]^G+1$. Note that in the presence of a graded separating set $F$, this bound can be improved by one, combining \cref{prop:connectionToSepInvSubsystem} and \cref{cor:TwiceDimPlusOne}. An example of a representation not admitting a graded separating set is $\Z_3 \to \GL(2,\C)$, $\xi \mapsto \diag(\xi,\xi^2)$: Its invariant ring $\C[x_1,x_2]^{\Z_3} = \C[x_1^3, x_1 x_2, x_2^3]$ contains no homogeneous polynomials separating orbits and satisfying homogeneous relations.

\begin{prop} \label{prop:connectionToSepInvComplete}
  Let $V$ be a finite-dimensional representation of a reductive algebraic group $G$. Assume that the invariant ring $S := \C[V^*]^G$ can be given a grading $S = \bigoplus_{d \geq 0} S_d$ with $S_0 = \C$ such that $S_1$ is a separating set. Then, with respect to this grading,
  \[\injDim{\Proj S,\O(1)} = \min\{|F| - 1 \mid F \subseteq S_1 \text{ finite separating set}\}.\]
\end{prop}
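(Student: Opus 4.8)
The plan is to reduce the projective statement to the affine dictionary between injective morphisms and separating sets provided by \cref{lem:injectiveOnAffineCones}, after first checking that every section of $\O(1)$ we need already lives in $S_1$. First I would establish the core equivalence for subspaces of $S_1$: given a finite-dimensional $W \subseteq S_1$ with basis $f_1,\ldots,f_k$, \cref{lem:injectiveOnAffineCones} tells us that $\varphi_W \colon \Proj S \dashrightarrow \P(W^*)$ is an injective morphism if and only if the cone map $\widehat{\varphi}_W \colon \Spec S \to W^*$ is injective. Under $W^* \cong \A^k$ this is exactly the map $(f_1,\ldots,f_k) \colon \Spec S \to \A^k$, and since $G$ is reductive the quotient $V \to \Spec S = \gitFrac{V}{G}$ is surjective, so by the characterisation of separating sets recalled after \cref{def:sepSet} the map $\widehat{\varphi}_W$ is injective precisely when $\{f_1,\ldots,f_k\}$ is a separating set. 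Using that a finite set is separating if and only if a basis of its span is (so minimal separating subsets of $S_1$ may be taken linearly independent), this yields
\[\min\{|F|-1 \mid F \subseteq S_1 \text{ finite separating set}\} = \inf\{\dim W - 1 \mid W \subseteq S_1,\ \varphi_W \text{ injective}\}.\]

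It then remains to identify the right-hand infimum with $\injDim{\Proj S,\O(1)}$, whose definition ranges over all subspaces of $H^0(\Proj S, \O(1))$ rather than of $S_1$. For this I would prove $S_1 = H^0(\Proj S, \O(1))$. By the Hochster--Roberts theorem the invariant ring $S = \C[V^*]^G$ of the linearly reductive group $G$ is Cohen--Macaulay, so with respect to the homogeneous maximal ideal $\mathfrak{m} = S_{\geq 1}$ of the given grading one has $\depth_\mathfrak{m} S = \dim S$. When $\dim S \geq 2$ this depth is at least $2$, forcing $H^0_\mathfrak{m}(S) = H^1_\mathfrak{m}(S) = 0$, and the standard local-cohomology sequence
\[0 \to H^0_\mathfrak{m}(S)_1 \to S_1 \to H^0(\Proj S, \O(1)) \to H^1_\mathfrak{m}(S)_1 \to 0\]
then gives $S_1 = H^0(\Proj S, \O(1))$. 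The degenerate cases $\dim S \leq 1$, where $\Proj S$ is empty or a finite set of points, can be treated directly. Combining this identification with the core equivalence of the first step proves the asserted equality.

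The main obstacle is precisely this second step: a priori the injection dimension permits sections of $\O(1)$ lying outside $S_1$, and one must rule out that such extra sections allow an injection into a strictly smaller projective space. Cohen--Macaulayness is the decisive input, as it guarantees $\depth_\mathfrak{m} S \geq 2$ and hence that the degree-one part of $S$ is already saturated. For a self-contained and uniform treatment of the edge cases I would alternatively phrase this through the saturation map $S \hookrightarrow \bigoplus_d H^0(\Proj S, \O(d))$, observing that under the depth hypothesis it is an isomorphism in degrees $\leq 1$.
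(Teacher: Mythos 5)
Your proof is correct and follows the same overall strategy as the paper: use \cref{lem:injectiveOnAffineCones} together with reductivity of $G$ to identify separating subsets of $S_1$ with injective morphisms $\varphi_{\langle F\rangle}$, and then show $S_1 = H^0(\Proj S,\O(1))$ so that every linear subsystem of $|\O(1)|$ already comes from a subspace of $S_1$. The one genuine difference is how you justify this last identity. You invoke the Hochster--Roberts theorem to get that $S$ is Cohen--Macaulay, hence $\depth_{\mathfrak m} S = \dim S \geq 2$ at the irrelevant maximal ideal and the vanishing of $H^0_{\mathfrak m}(S)$ and $H^1_{\mathfrak m}(S)$; the paper instead uses only that $S = \C[V^*]^G$ is a \emph{normal} ring (\cite[Proposition~2.4.4]{DK15}), an elementary fact since $S$ is the intersection of the normal ring $\C[V^*]$ with a subfield of its fraction field, and Serre's criterion then yields the same bound $\depth_{\mathfrak m} S \geq \min(2,\dim S)$ and hence the same degree-wise saturation. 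So your argument trades a soft input for a deep theorem that buys nothing extra here, but it is valid: your local cohomology sequence does hold for the given (possibly non-standard) grading, because $H^0(\Proj S,\widetilde{S(d)})$ computes the degree-$d$ part of the sections of the structure sheaf on $\Spec S \setminus V(\mathfrak m)$. One small point you omit and the paper does not: before speaking of $\injDim{\Proj S,\O(1)}$ one should check that $\O(1)$ is actually a line bundle, which follows because $S_1$ being a separating set forces $\varphi_{S_1}$ to be a morphism, i.e., $V(S_1)=\emptyset$ in $\Proj S$.
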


As before, we remark that the grading of $S = \C[V^*]^G$ in \cref{prop:connectionToSepInvComplete} need not agree with the grading induced from the polynomial ring $\C[V^*]$. Moreover, note that  \cref{prop:connectionToSepInvComplete} does not assume $S_1$ to be spanned by homogeneous polynomials with respect to the $\C[V^*]$-grading (in which case \cref{prop:connectionToSepInvComplete} is a consequence of \cref{prop:connectionToSepInvSubsystem}).

\begin{proof}[Proof of \cref{prop:connectionToSepInvComplete}]
  Since $G$ is reductive, the invariant ring $S = \C[V^*]^G$ is a finitely generated $\C$-algebra and, in particular, $S_1$ is a finite-dimensional vector space.
  Moreover, the reductivity of $G$ implies that a subset $F \subseteq S_1$ is separating if and only if the morphism $\widehat{\varphi}_{\langle F \rangle} \colon \Spec S \to \langle F \rangle^*$ is injective. By \cref{lem:injectiveOnAffineCones}, this is equivalent to $\varphi_{\langle F \rangle} \colon \Proj S \dashrightarrow \P(\langle F \rangle^*)$ being an injective morphism.
  
  By assumption, this is the case for the separating set $F = S_1$, so in particular the rational map $\varphi_{S_1} \colon \Proj S \dashrightarrow \P(S_1^*)$ is a morphism. This means that the vanishing set of the homogeneous ideal $(S_1) \subseteq S$ in $\Proj S$ is empty, hence the coherent sheaf $\O_{\Proj S}(1)$ is a line bundle. Its global sections are $H^0(\Proj S, \O(1)) = S_1$, since $S = \C[V^*]^G$ is a normal ring, see \cite[Proposition~2.4.4]{DK15}. In particular, every rational map from $\Proj S$ to a projective space given by a linear subsystem of $|\O(1)|$ is of the form $\varphi_{\langle F \rangle}$ for some $F \subseteq S_1$. Then the previous observation proves the claim.
\end{proof}

\begin{remark}
  \cref{prop:connectionToSepInvComplete} and its proof generalize verbatim to the setting that the finite-dimensional representation $V$ of $G$ is replaced by the action of a reductive algebraic group $G$ on a normal irreducible affine variety $X = \Spec R$, with the obvious generalization of \cref{def:sepSet} to this case, as in \cite[Definition~2.4.1]{DK15}.
\end{remark}

\cref{prop:connectionToSepInvSubsystem} and \cref{prop:connectionToSepInvComplete} show that graded separating sets have an interpretation as injective morphisms of projective varieties to projective spaces by subsystems of a fixed line bundle. Conversely, one can often interpret injections of a given projective variety as separating sets with respect to a suitable invariant ring. We highlight this in the setting of normal toric varieties:

\begin{thm} \label{prop:toricInjAsSepSet}
  Let $\Ell$ be an ample line bundle on a normal projective toric variety $X$.
  There is a finite-dimensional representation $V$ of 
  a diagonalizable group $G$ and a grading $S = \bigoplus_{d \geq 0} S_d$ of the invariant ring $S := \C[V^*]^G$ such that
  \[\injDim{X,\Ell} = \inf\{|F| - 1 \mid F \subseteq S_1 \text{ finite separating set of invariants}\}.\]
\end{thm}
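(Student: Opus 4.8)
The plan is to realize the ample line bundle $\Ell$ on the normal projective toric variety $X$ via a Cox-ring-type construction, so that $X$ becomes a GIT quotient of affine space by a diagonalizable group and $\Ell$-sections become invariants of a suitably graded invariant ring. First I would recall that for a normal projective toric variety $X$ with fan $\Sigma$, the Cox ring $R = \Cox(X) = \C[x_\rho \mid \rho \in \Sigma(1)]$ is a polynomial ring on which the diagonalizable group $G := \Hom(\Cl(X), \C^*)$ acts via the grading of $R$ by the class group $\Cl(X)$, and that $X$ is the GIT quotient $X \cong (\Spec R)^{ss} /\!\!/ G$ with respect to the character corresponding to $\Ell$. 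The key point is that global sections $H^0(X, \Ell)$ are exactly the degree-$[\Ell]$ graded piece $R_{[\Ell]}$ of the Cox ring, i.e., the monomials $x^a$ whose $\Cl(X)$-degree equals the class $[\Ell]$; these are precisely the $G$-semi-invariants of weight $[\Ell]$.

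The main maneuver is to convert semi-invariants of a fixed nontrivial weight into honest invariants of an enlarged group, and to produce the correct grading on the invariant ring. I would set $V := (\Spec R)^* $ — more precisely take $V$ to be the representation of $G$ on the dual of the affine space $\Spec R = V^*$ — and then enlarge $G$ by an extra copy of $\C^*$ (or a finite cyclic group, depending on whether $\Ell$ generates a free or torsion part) acting by scaling so that the weight-$[\Ell]$ semi-invariants of $G$ become genuine invariants of the enlarged diagonalizable group $\widetilde{G}$. Concretely, the enlarged invariant ring $S := \C[V^*]^{\widetilde G}$ should be the subring of $R$ generated by all degree-$[\Ell]$ monomials (together with their products in higher $\Ell$-degrees), i.e., $S = \bigoplus_{d \geq 0} R_{d[\Ell]}$, which is exactly the section ring $\bigoplus_{d\geq 0} H^0(X, \Ell^{\otimes d})$. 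Equipping $S$ with the grading $S_d := R_{d[\Ell]}$ gives $S_0 = \C$, and because $\Ell$ is ample this section ring satisfies $\Proj S \cong X$ with $\O_{\Proj S}(1) = \Ell$ and $H^0(\Proj S, \O(1)) = S_1 = H^0(X, \Ell)$.

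With this setup in hand, the conclusion follows by invoking the earlier results. I would verify that $\varphi_{S_1} = \varphi_\Ell$ is the natural evaluation morphism of $\Ell$, and since $\Ell$ is ample hence globally generated on the normal toric variety, $\varphi_\Ell$ is a morphism and $S_1$ is a separating set in the sense required. Then $S$ is a normal finitely generated graded $\C$-algebra with $S_0 = \C$ and $S_1$ a separating set, so \cref{prop:connectionToSepInvComplete} applies directly (or, if one wants the full generality including the remark after its proof, the generalization to reductive actions on normal affine varieties) and yields
\[\injDim{\Proj S, \O(1)} = \inf\{|F| - 1 \mid F \subseteq S_1 \text{ finite separating set}\}.\]
Since $\Proj S \cong X$ and $\O(1)$ corresponds to $\Ell$ under this isomorphism, the left-hand side is $\injDim{X, \Ell}$, completing the argument.

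The step I expect to be the main obstacle is constructing the diagonalizable group $\widetilde G$ and verifying that its invariant ring is \emph{exactly} the section ring $\bigoplus_{d} R_{d[\Ell]}$ with no extra invariants and no missing generators — in particular confirming that the $G$-semi-invariants of weight $[\Ell]$ become precisely the invariants of $\widetilde G$ after the scaling enlargement, and handling the torsion of $\Cl(X)$ correctly so that $\widetilde G$ remains diagonalizable. A secondary subtlety is checking that $S_1$ genuinely separates $G$-orbits in $V$ (equivalently, that $\varphi_\Ell$ is injective on the GIT-semistable locus modulo $G$), which uses ampleness of $\Ell$ and the normality statement $H^0(\Proj S, \O(1)) = S_1$ from \cite[Proposition~2.4.4]{DK15} cited in the proof of \cref{prop:connectionToSepInvComplete}.
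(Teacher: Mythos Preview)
Your overall strategy---realize the section ring $\bigoplus_{d\geq 0} H^0(X,\Ell^{\otimes d})$ as an invariant ring for a diagonalizable group acting on the Cox affine space, then invoke \cref{prop:connectionToSepInvComplete}---matches the paper exactly. But two concrete points are off.

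First, the group construction is backwards. Starting from $G_0 := \Hom(\Cl(X),\C^*)$ acting on $V = \C^{\Sigma(1)}$, the $G_0$-invariants are just the constants, and \emph{enlarging} $G_0$ can only shrink the invariant ring further. What you want is the \emph{subgroup}
\[G := \{\xi \in G_0 \mid \xi(\alpha)=1\} = \ker\big(\alpha \colon G_0 \to \C^*\big),\]
where $\alpha \in \Cl(X)$ is the class of $\Ell$. Then a homogeneous $f \in \Cox(X)_\beta$ is $G$-invariant iff $\beta \in \Z\alpha$, so $\C[V^*]^G = \bigoplus_{d\in\Z} \Cox(X)_{d\alpha} = \bigoplus_{d\geq 0} H^0(X,\Ell^{\otimes d})$, using ampleness for the last equality. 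This is exactly what the paper does, and it makes the ``main obstacle'' you flagged disappear: no enlargement, no extra variable, no torsion bookkeeping beyond the fact that subgroups of diagonalizable groups are diagonalizable.

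Second, your claim ``$\Ell$ ample hence globally generated, so $\varphi_\Ell$ is a morphism and $S_1$ is a separating set'' conflates two things. Global generation only makes $\varphi_\Ell$ a morphism; for $S_1$ to be separating you need $\varphi_\Ell$ to be \emph{injective}, and this can fail for ample line bundles on normal projective toric varieties (see \cref{ex:noSmallInjectionSingular}, where $\varphi_\Ell$ is finite but not injective). The paper handles this by noting that when $S_1$ is not separating, both sides of the claimed equality are $\infty$; you should do the same rather than assert separation outright.
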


\begin{proof}
  Let $X = X_\Sigma$ be the normal toric variety associated to a fan of rational polyhedral cones $\Sigma$. 
  Its Cox ring $\Cox(X_\Sigma)$ is the polynomial ring $\C[x_\rho \mid \rho \in \Sigma(1)]$ and it is graded by the class group $\Cl(X)$, which is a finitely generated abelian group, see \cite[§5.2]{CLS11}. If $\Ell \in \Pic(X)$ corresponds to $\alpha \in \Cl(X)$ under the inclusion $\Pic(X) \hookrightarrow \Cl(X)$, then the $\alpha$-graded piece of the Cox ring is $\Cox(X_\Sigma)_\alpha = H^0(X, \Ell)$ by \cite[Proposition~5.4.1]{CLS11}.
  
  The grading of the Cox ring by the class group corresponds to a linear action of the character group $G_0 := \Hom_\Z(\Cl(X),\C^*)$ on the vector space $V := \C^{\Sigma(1)} = \Spec \Cox(X_\Sigma)$, and the graded pieces of $\Cox(X_\Sigma)$ are the eigenspaces for the induced action of $G_0$ on $\C[V^*] = \Cox(X_\Sigma)$. Since $\Cl(X)$ is a finitely generated abelian group, $G_0$ is a diagonalizable group (i.e., the product of a torus and a finite abelian group).
  
  Consider the subgroup $G := \{\xi \in G_0 \mid \xi(\alpha) = 1\}$, which is again diagonalizable. Then a homogeneous element $f \in \Cox(X_\Sigma)_\beta$ is invariant under the action of $G$ if and only if $\xi(\beta) = 1$ for all $\xi \in G$. By \cite[Exercise~3.2.10.(4)]{Spr98}, this is only the case when $\beta$ lies in the subgroup of $\Cl(X)$ generated by $\alpha$. This means
    \[\C[V^*]^G = \bigoplus_{d \in \Z} \Cox(X_\Sigma)_{d\alpha} \cong \bigoplus_{d \in \Z} H^0(X,\Ell^{\otimes d}) = \bigoplus_{d \in \N} H^0(X,\Ell^{\otimes d}),\]
  where the last equality follows from the fact that $H^0(X,\Ell^{\otimes d}) = 0$ for all $d < 0$, since $\Ell$ is ample. Defining $S_d := H^0(X,\Ell^{\otimes d})$, this gives a grading $S = \bigoplus_{d \geq 0} S_d$ on the invariant ring $S := \C[V^*]^G$ with $S_1 = H^0(X,\Ell)$. Then the claim follows from \cref{prop:connectionToSepInvComplete}, using that $X \cong \Proj S$. Note that $\injDim{X,\Ell} = \infty$ if and only if $S_1 = H^0(X,\Ell)$ is not a separating set.
\end{proof}

\begin{ex} \label{ex:wpsAsSepInv}
  We exemplify \cref{prop:toricInjAsSepSet} in the case of a weighted projective space $X = \P(q_0,\ldots,q_n)$. Its Cox ring is the polynomial ring $\C[x_0,\ldots,x_n]$ as a $\Z$-graded ring with $\deg(x_i) = q_i$. This grading corresponds to the action of $G_0 = \C^*$ on $V = \C^{n+1}$ given by $\C^* \to \GL(n+1, \C)$, $t \mapsto \diag(t^{q_0},\ldots,t^{q_n})$. Every ample line bundle on $X$ is of the form $\Ell \cong \O(k)$ with $k > 0$ divisible by all $q_i$. Its section ring $\bigoplus_{d \in \N} H^0(X,\Ell^{\otimes d})$ is the $k$-th Veronese subring $\bigoplus_{d \geq 0} \C[x_0,\ldots,x_n]_{dk}$, which is the invariant ring for the action of the subgroup $G := \{\xi \in \C^* \mid \xi^k = 1\} \subseteq G_0$ on $V$.
  
  Hence, $\injDim{\P(q_0,\ldots,q_n),\O(k)}+1$ is the smallest size of a separating set of invariants $F \subseteq \C[x_0,\ldots,x_n]^{\Z_k}$ for the representation 
    \[\Z_k \to \GL(n+1,\C), \quad \xi \mapsto \diag(\xi^{q_0},\ldots,\xi^{q_n}),\]
  such that the polynomials in $F$ are homogeneous of $(q_0,\ldots,q_n)$-weighted degree $k$.
\end{ex}

\subsection{Segre--Veronese varieties and partially symmetric tensors} \label{ssec:SegreVeronese}

A major source of examples in the constructive parts of this article are products of projective spaces. Every ample line bundle on $X = \P^{n_1} \times \ldots \times \P^{n_r}$ is very ample, so \eqref{eq:lineBundleConditions} makes clear which line bundles give rise to injections: For $(d_1,\ldots,d_r) \in \Z^r$ we have
\[\injDim{X,\O(d_1,\ldots,d_r)} < \infty \qquad \Leftrightarrow \qquad d_1,\ldots,d_r > 0.\]

For $\Ell = \O(d_1,\ldots,d_r)$ with $d_i \geq 1$, the vector space $H^0(X, \Ell)$ can be identified with the space of multihomogeneous forms of degree $\dVec = (d_1,\ldots,d_r)$:
\[H^0(X,\Ell) = \bigotimes_{i=1}^r \Sym^{d_i} (\C^{n_i+1})^*.\]
Then the closed embedding $\varphi_\Ell \colon X \hookrightarrow \P(H^0(X, \Ell)^*)$ is the natural morphism
\begin{align*}
\varphi_\Ell \colon \P(\C^{n_1+1}) \times \ldots \times \P(\C^{n_r+1})  &\hookrightarrow \P\left(\bigotimes_{i=1}^r \Sym^{d_i} \C^{n_i+1}\right), \\
[v_1] \times \ldots \times [v_r] &\mapsto [v_1^{d_1} v_2^{d_2} \ldots v_r^{d_r}].
\end{align*}
Its image $Y := \im(\varphi_\Ell)$ is a subvariety of $\P(\bigotimes_{i=1}^r \Sym^{d_i} \C^{n_i+1})$ called the \defstyle{Segre--Veronese variety} of type $(n_1,\ldots,n_r;d_1,\ldots,d_r)$. The space $\P(\bigotimes_{i=1}^r \Sym^{d_i} \C^{n_i+1})$ consists of partially symmetric tensors up to scaling, and the subvariety $Y$ consists of the \emph{decomposable} (or \emph{rank~1}) partially symmetric tensors. Its secant locus $\sigma_2^{\circ}(Y) \subseteq \P(\bigotimes_{i=1}^r \Sym^{d_i} \C^{n_i+1})$ is the set of partially symmetric tensors (up to scaling) of \emph{rank at most~2}. 
We refer the reader to \cite[§9]{MS19} for a brief introduction to varieties of tensors, their ranks and secant loci. For in-depth background on the theory of (partially symmetric) tensors and their importance in applications, see \cite{Lan12} .
In this language, \cref{prop:secantAvoidance} gives the following reinterpretation of $\injDim{\P^{n_1} \times \ldots \times \P^{n_r},\O(d_1, \ldots, d_r)}$:

\begin{cor}
  Let $X = \P^{n_1} \times \ldots \times \P^{n_r}$ and $\Ell := \O(d_1, \ldots, d_r)$ for $n_i, d_i \geq 1$. Then
  \begin{align*}
  \injDim{X,\Ell} = \min\big\{\codim L - 1 \ \mid \ &L \subseteq \bigotimes_{i=1}^r \Sym^{d_i} \C^{n_i+1} \text{ subspace not containing any} \\ &\text{non-zero partially symmetric tensor of rank $\leq 2$}\big\}.
  \end{align*}
\end{cor}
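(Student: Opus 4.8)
The plan is to recognize this corollary as a direct translation of \cref{prop:secantAvoidance} into the language of partially symmetric tensors, using the explicit description of $H^0(X,\Ell)$ and of the Segre--Veronese variety established just above. First I would check that \cref{prop:secantAvoidance} is applicable, i.e.\ that $\injDim{X,\Ell} < \infty$: since $d_1,\ldots,d_r \geq 1$, the line bundle $\Ell = \O(d_1,\ldots,d_r)$ is ample, hence very ample on a product of projective spaces, so \eqref{eq:lineBundleConditions} gives finiteness. Thus \cref{prop:secantAvoidance} yields
\[\injDim{X,\Ell} = \min\{\codim L - 1 \mid L \subset \P(H^0(X,\Ell)^*) \text{ linear with } L \cap \sigma_2^{\circ}(Y) = \emptyset\},\]
where $Y = \varphi_\Ell(X)$ is the Segre--Veronese variety of type $(n_1,\ldots,n_r;d_1,\ldots,d_r)$.

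Next I would substitute the concrete descriptions. Dualizing the identification $H^0(X,\Ell) = \bigotimes_{i=1}^r \Sym^{d_i}(\C^{n_i+1})^*$ gives $H^0(X,\Ell)^* = \bigotimes_{i=1}^r \Sym^{d_i}\C^{n_i+1} =: W$, so that $\P(H^0(X,\Ell)^*) = \P(W)$ is exactly the projective space of partially symmetric tensors up to scaling. Under this identification, $Y$ is the locus of rank~$1$ tensors, and, as recalled in the preceding discussion, $\sigma_2^{\circ}(Y)$ is precisely the set of classes of nonzero partially symmetric tensors of rank at most~$2$.

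Finally I would match the two minima term by term. Every projective linear subspace $L \subset \P(W)$ is uniquely of the form $L = \P(\widetilde{L})$ for a linear subspace $\widetilde{L} \subseteq W$, and the condition $L \cap \sigma_2^{\circ}(Y) = \emptyset$ says exactly that $\widetilde{L}$ contains no class of a nonzero tensor of rank $\leq 2$, i.e.\ no nonzero partially symmetric tensor of rank $\leq 2$ (the zero tensor, always in $\widetilde{L}$, being excluded since $\P(W)$ parameterizes only nonzero tensors). Since projective and linear codimension agree, $\codim_{\P(W)} L = \codim_{W} \widetilde{L}$, the two minimization problems are identical, and the claim follows.

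The only step demanding care---rather than a genuine obstacle---is this codimension bookkeeping: one must confirm that passing from the projective subspace $L$ to its underlying vector subspace $\widetilde{L}$ preserves codimension, so that the shift ``$-1$'' inherited from \cref{prop:secantAvoidance} reproduces exactly the ``$-1$'' appearing in the statement. All the substantive geometric content---namely the identification of $\sigma_2^{\circ}(Y)$ with the rank-$\leq 2$ locus---has already been supplied in the discussion of Segre--Veronese varieties, so the proof is essentially a change of vocabulary.
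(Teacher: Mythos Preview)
Your proposal is correct and matches the paper's approach exactly: the paper states the corollary as an immediate reinterpretation of \cref{prop:secantAvoidance} in the tensor language set up in the preceding paragraphs, and you have simply spelled out that translation carefully (including the finiteness check via \eqref{eq:lineBundleConditions} and the codimension bookkeeping).
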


In other words, the search for a low-dimensional injection of $\P^{n_1} \times \ldots \times \P^{n_r}$ by polynomials of multidegree $(d_1,\ldots,d_r)$ is equivalent to finding a high-dimensional subspace $L \subseteq \bigotimes_{i=1}^r \Sym^{d_i} \C^{n_i+1}$ such that decomposable partially symmetric tensors stay identifiable under the quotient $\Sym^{d_i} \C^{n_i+1} \twoheadrightarrow \Sym^{d_i} \C^{n_i+1}/L$ (in the sense that any decomposable tensors can be uniquely reconstructed from its image under the quotient operation).

\medskip
By \cref{prop:toricInjAsSepSet}, this question also has a formulation in terms of separating invariants. We work this out carefully here, since an incorrect description in the literature gave rise to wrong lower bounds on injection dimensions \cite{DJ16}. We comment on this unfortunate flaw in the literature and its correction at the beginning of \cref{sec:lowerBounds}.

The Cox ring of $X = \prod_{i=1}^r \P^{n_i}$ is the polynomial ring $\C[V^*]$, where $V := \bigoplus_{i=1}^k \C^{n_i+1}$. Explicitly, denoting
the coordinates on $\C^{n_i+1}$ by $x_{i0}, x_{i1},\ldots,x_{in_i}$, this is
  \[\Cox(\P^{n_1} \times \ldots \times \P^{n_r}) = \C[x_{ij} \mid i \in \{1,\ldots,r\}, j \in \{0,1,\ldots,n_i\}],\]
equipped with a $\Z^r$-grading given by $\deg x_{ij} = e_i \in \Z^r$. This grading corresponds to the action of $G_0 = (\C^*)^r$ on $V := \bigoplus_{i=1}^r \C^{n_i+1}$ given by 
  \[(t_1, \ldots, t_r) \cdot (v_1,\ldots,v_r) := (t_1 v_1, \ldots, t_r v_r) \quad \text{for all } v = (v_1, \ldots, v_r) \in V.\]
Every ample line bundles on $X$ is of the form $\Ell \cong \O(d_1,\ldots,d_r)$ with $d_i > 0$. Its section ring $\bigoplus_{k \in \N} H^0(X,\Ell^{\otimes k})$ is the homogeneous coordinate ring of the Segre--Veronese variety of type $(n_1,\ldots,n_r; d_1,\ldots,d_r)$. It is the invariant ring for the action on $V$ of the subgroup 
  \[G := \{(t_1, \ldots, t_r) \in (\C^*)^r \mid t_1^{d_1} \cdots t_r^{d_r} = 1\} \subseteq G_0,\]
which is isomorphic to $(\C^*)^{r-1} \times \Z_k$, where $k := \gcd\{d_1,\ldots,d_r\}$. Explicitly, consider the action of $(\C^*)^{r-1} \times \Z_k$ on $V$ given by 
  \[((t_1,\ldots,t_{r-1}) \times \xi) \times e_{ij} \mapsto \begin{cases} t_i^{d_1 \cdots \widehat{d_i} \cdots d_r} \: \xi \: e_{ij} &\text{if } i \leq r-1 \\ (t_1 \cdots t_{r-1})^{-d_1 \cdots d_{r-1}} \: \xi \: e_{ij} &\text{if } i = r.\end{cases}\]
Its invariant ring is generated by all monomials of multidegree $(d_1,\ldots,d_r)$. A separating set of invariants consisting of $s$ linear combinations of these corresponds to an injection of $\P^{n_1} \times \ldots \times \P^{n_r} \injTo \P^{s-1}$ given by global sections of $\O(d_1,\ldots,d_r)$.

\section{Obstructions to low-dimensional injections} \label{sec:lowerBounds}
In this section, we provide lower bounds on injection dimensions due to topological obstructions: The first approach (\cref{cor:lowerBoundFromMorphism}) exploits that in projective spaces any two subvarieties of complementary dimension must intersect, while this is not necessarily the case for arbitrary projective varieties -- this discrepancy leads to lower bounds on injection dimensions, irrespective of the choice of a line bundle. We use this simple argument to improve previously known lower bounds on $\injDim{\P^{n_1} \times \ldots \times \P^{n_r}}$.

Secondly, a more sophisticated argument based on (dis-)connectedness properties for orbits of linear spaces under a finite group action bounds injection dimensions for line bundles which admit a root of some order (\cref{thm:lowerBound}) or are divisible in the class group (\cref{thm:lowerBoundReflexive}). We apply this to construct $n$-dimensional irreducible varieties of injection dimension $\geq 2n$ and comment on injection dimensions of weighted projective spaces.

Previous work from the perspective of separating invariants \cite{Duf08}, \cite{DJ14} established that $\injDim{\P^n, \O(d)} = 2n$ for all $d \geq 2$, whereas of course $\injDim{\P^n, \O(1)} = n$. There, the (slightly stronger) question of injecting the affine cones over Veronese varieties into affine spaces is studied with techniques from local cohomology in order to obtain lower bounds.

With a similar approach, the article \cite{DJ16} claims to prove for products of projective spaces $\P^{n_1} \times \ldots \times \P^{n_r}$ that the injection dimension with respect to a very ample line bundle $\O(d_1,\ldots,d_r)$ is bounded below by $2(\sum_{i=1}^r n_i)$, provided that not all $d_i$ are equal to $1$. Unfortunately, this is wrong -- we provide an explicit counterexample in \cref{prop:tangentialInjections} showing $\injDim{\P^1 \times \P^1, \O(1,2)} = 3$. The flaw in \cite{DJ16} is an incorrect description of a group action identifying the homogeneous coordinate ring of a Segre--Veronese variety with an invariant ring in the case that not all $d_i$ are equal. Correcting this with the description given in \cref{ssec:SegreVeronese}, a straightforward adaption of \cite[Proof of Theorem~5.4]{DJ16} shows $\injDim{\P^{n_1} \times \ldots \times \P^{n_r},\O(d_1,\ldots,d_r)} \geq 2 (\sum_{i=1}^r n_i)$ whenever $\gcd\{d_1,\ldots,d_r\} > 1$.

\cref{thm:lowerBound} and \cref{thm:lowerBoundReflexive} generalize these results from (products of) projective spaces to arbitrary projective varieties.

\subsection{Existence of fibrations}

We start out with an elementary observation giving lower bounds on injection dimensions:

\begin{lemma} \label{prop:boundByNonemptyIntersections}
  Let $X$ be a projective variety and let $Y,Z \subseteq X$ be disjoint closed subsets. Then
    \[\injDim{X} \geq \dim Y + \dim Z + 1.\]
\end{lemma}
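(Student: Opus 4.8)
The plan is to start from an arbitrary injective morphism $\varphi \colon X \injTo \P^s$—so that $s \geq \injDim{X}$, the case $\injDim{X} = \infty$ being vacuous—and to show $s \geq \dim Y + \dim Z + 1$ by transporting the disjointness of $Y$ and $Z$ into $\P^s$, where disjoint subvarieties are forced to have small total dimension. We may assume $Y$ and $Z$ are both nonempty, as otherwise the inequality is immediate from the elementary bound $\injDim{X} \geq \dim X$ (any injection $X \injTo \P^s$ is finite, so $\dim X = \dim \varphi(X) \leq s$).

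First I would record that $\varphi$ is finite: as a morphism from the proper $\C$-scheme $X$ to a separated scheme it is proper, and its injectivity makes every fibre finite, so $\varphi$ is finite. Hence for any closed subset $C \subseteq X$ the image $\varphi(C)$ is closed with $\dim \varphi(C) = \dim C$. Choosing top-dimensional irreducible components $Y_0 \subseteq Y$ and $Z_0 \subseteq Z$, with $\dim Y_0 = \dim Y$ and $\dim Z_0 = \dim Z$, produces two irreducible closed subvarieties $\varphi(Y_0), \varphi(Z_0) \subseteq \P^s$ of these same dimensions. Since $\varphi$ is injective and $Y \cap Z = \emptyset$, these two images are disjoint.

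The key step is the classical projective dimension theorem: two irreducible closed subvarieties $V, W \subseteq \P^s$ with $\dim V + \dim W \geq s$ necessarily meet. Applying its contrapositive to the disjoint pair $\varphi(Y_0), \varphi(Z_0)$ gives $\dim Y + \dim Z = \dim \varphi(Y_0) + \dim \varphi(Z_0) < s$, that is, $s \geq \dim Y + \dim Z + 1$. Since this holds for every injective morphism $X \injTo \P^s$, taking the infimum over admissible $s$ yields $\injDim{X} \geq \dim Y + \dim Z + 1$.

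I do not expect a genuine obstacle here: the only points needing care are that $Y$ and $Z$ are not assumed irreducible—handled by passing to top-dimensional components—and that the finiteness of the injective morphism $\varphi$ guarantees that images of closed sets remain closed and retain their dimension. The substance of the argument is exactly the contrast announced for this section, namely that in $\P^s$ subvarieties of complementary dimension must intersect, whereas the disjoint closed subsets $Y, Z \subseteq X$ witness that an arbitrary $X$ need not have this property.
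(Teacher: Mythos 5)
Your proof is correct and follows essentially the same route as the paper: push the disjoint closed sets forward under an injection $X \injTo \P^s$ (which is finite, hence preserves dimension and closedness) and invoke the projective dimension theorem to force $s \geq \dim Y + \dim Z + 1$. The only difference is that you spell out the reduction to top-dimensional irreducible components, which the paper leaves implicit.
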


\begin{proof}
  For any injection $\varphi \colon X \injTo \P^s$, we have $\varphi(Y) \cap \varphi(Z) = \varphi(Y \cap Z) = \emptyset$ and $\dim \varphi(Y) = \dim Y$, $\dim \varphi(Z) = \dim Z$. But two subvarieties of $\P^s$ can only be disjoint if their dimensions sum to at most $s-1$, hence $s \geq \dim Y + \dim Z + 1$.
\end{proof}

\begin{ex}
  Let $L \subseteq \P^n$ be a linear subspace of codimension~$2$ and consider the blow-up of $\P^n$ along $L$. Then the strict transforms of two distinct hyperplanes containing $L$ are disjoint effective divisors on $\Bl_L \P^n$. Hence, $\injDim{\Bl_L \P^n} \geq 2n-1$ by \cref{prop:boundByNonemptyIntersections}.
\end{ex}

A particular consequence of \cref{prop:boundByNonemptyIntersections} is that the existence of fibrations $X \to S$ is an obstruction to low-dimensional injections of $X$:

\begin{prop} \label{cor:lowerBoundFromMorphism}
  Let $X \to S$ be a surjective morphism of irreducible projective varieties with $\dim S \geq 1$. Then
    \[\injDim{X} \geq 2 \dim X - \dim S.\]
\end{prop}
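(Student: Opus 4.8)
The goal is to bound $\injDim{X}$ below for a surjective morphism $f \colon X \to S$ of irreducible projective varieties with $\dim S \geq 1$. The natural plan is to produce two disjoint closed subsets of $X$ whose dimensions add up to at least $2\dim X - \dim S - 1$, and then invoke \cref{prop:boundByNonemptyIntersections}. The two subsets should be fibers of $f$: if $F_1 = f^{-1}(s_1)$ and $F_2 = f^{-1}(s_2)$ for distinct points $s_1 \neq s_2 \in S$, then $F_1 \cap F_2 = \emptyset$ automatically, so the only task is to control the dimensions of these fibers from below.

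First I would set $n := \dim X$ and $m := \dim S$. The key input is the dimension formula for fibers of a dominant morphism of irreducible varieties: a general fiber of $f$ has dimension exactly $n - m$, and moreover by upper semicontinuity of fiber dimension every fiber has dimension \emph{at least} $n - m$ (this is the standard fact that all nonempty fibers of a dominant morphism between irreducible varieties have dimension $\geq \dim X - \dim S$; see e.g.\ the lower bound in the fiber dimension theorem). Since $f$ is surjective, every fiber is nonempty, so for any two distinct points $s_1, s_2 \in S$ we have $\dim F_1 \geq n - m$ and $\dim F_2 \geq n - m$. Because $\dim S = m \geq 1$, we may certainly choose two distinct points in $S$.

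Applying \cref{prop:boundByNonemptyIntersections} to the disjoint closed subsets $F_1, F_2 \subseteq X$ then yields
\[\injDim{X} \geq \dim F_1 + \dim F_2 + 1 \geq (n-m) + (n-m) + 1 = 2n - 2m + 1.\]
This is slightly stronger than the claimed $2n - m$ when $m \geq 2$, but matches it when $m = 1$; so I should double-check against the intended bound. The cleaner route giving exactly $2\dim X - \dim S$ is to take only \emph{one} full fiber together with a hyperplane-section-type subvariety of $S$ pulled back. Concretely: choose a point $s_2 \in S$ and a closed subvariety $T \subsetneq S$ of dimension $m-1$ not containing $s_2$ (possible since $\dim S \geq 1$); then set $Z := f^{-1}(s_2)$ and $Y := f^{-1}(T)$. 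These are disjoint (their images in $S$ are disjoint), $\dim Z \geq n - m$ by the fiber bound, and $\dim Y \geq n - 1$ since $f\colon Y \to T$ is surjective with fibers of dimension $\geq n - m$ over a base of dimension $m-1$. Then \cref{prop:boundByNonemptyIntersections} gives
\[\injDim{X} \geq \dim Y + \dim Z + 1 \geq (n-1) + (n-m) + 1 = 2n - m.\]

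The main obstacle, such as it is, lies in justifying the lower fiber-dimension bound $\dim f^{-1}(s) \geq \dim X - \dim S$ for \emph{every} point $s$ (not just general ones), and in producing the subvariety $T \subseteq S$ of the correct dimension avoiding a chosen point. The first is a standard consequence of Krull's principal ideal theorem applied locally, or of the theorem on dimension of fibers for dominant morphisms of irreducible varieties; I would cite this rather than reprove it. The second is elementary: cutting $S$ down by a general hyperplane section repeatedly (after choosing a projective embedding) produces an irreducible subvariety of dimension $m-1$, and a generic such section can be arranged to miss any fixed point, or one simply takes an irreducible component of dimension $m-1$ of a hyperplane section chosen to avoid $s_2$. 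I would present the argument via the $Y, Z$ construction to land exactly on the stated bound $2\dim X - \dim S$.
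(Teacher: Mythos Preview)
Your final argument is correct and essentially identical to the paper's: the paper chooses disjoint irreducible subvarieties $Y_0,Z_0\subseteq S$ with $\dim Y_0+\dim Z_0=\dim S-1$, pulls them back to $X$, and applies \cref{prop:boundByNonemptyIntersections} together with the fiber-dimension bound $\dim f^{-1}(Y_0)\geq \dim X-\dim S+\dim Y_0$; your choice $T$ of dimension $m-1$ and a point $s_2$ is exactly this with $(\dim Y_0,\dim Z_0)=(m-1,0)$.

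One slip to fix: your two-point-fiber bound $2n-2m+1$ is \emph{weaker} than $2n-m$ for $m\geq 2$, not stronger (check $m=2$: you get $2n-3<2n-2$). That is precisely why the second construction with a codimension-one $T$ is needed, so your instinct to pass to it was right, but the stated reason was backwards.
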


\begin{proof}
  We can find disjoint irreducible subvarieties $Y_0,Z_0 \subseteq S$ with $\dim Y_0 + \dim Z_0 = \dim S - 1$. Let $Y,Z \subseteq X$ be the fibers of $X \to S$ over them. Note that $Y \cap Z = \emptyset$, so
  \cref{prop:boundByNonemptyIntersections} implies
  \begin{align*}
    \injDim{X} &\geq \dim Y + \dim Z + 1 \geq (\dim X - \dim S + \dim Y_0) + (\dim X - \dim S + \dim Z_0)+1 \\&\geq 2\dim X - \dim S. \qedhere
  \end{align*}
\end{proof}

\begin{ex}
  An $n$-dimensional projective bundle over a curve has injection dimension at least $2n - 1$.
\end{ex}

In \cite[Proposition~5.6]{DJ16}, the following general bound for injection dimensions of products of projective spaces was derived with techniques from local cohomology:
  \[\injDim{\P^{n_1} \times \ldots \times \P^{n_r}} \geq 2\big(\sum_{i=1}^r n_i\big) - 2\min\{n_1, \ldots, n_r\}+1.\]
Our basic geometric observations improve this bound as follows:

\begin{cor} \label{cor:boundProductProjectiveSpacesGeneral}
  For all $n_1, \ldots, n_r \geq 1$, we have
  \[\injDim{\P^{n_1} \times \ldots \times \P^{n_r}} \geq 2\big(\sum_{i=1}^r n_i\big) - \min\{n_1, \ldots, n_r\}.\]
\end{cor}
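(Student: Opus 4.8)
The plan is to apply \cref{cor:lowerBoundFromMorphism} to a well-chosen surjective morphism from $X = \P^{n_1} \times \ldots \times \P^{n_r}$. Without loss of generality, I would assume the factors are ordered so that $n_r = \min\{n_1, \ldots, n_r\}$. The natural candidate for the fibration is the projection onto all factors except the last, namely
\[
  \pi \colon \P^{n_1} \times \ldots \times \P^{n_r} \longrightarrow S := \P^{n_1} \times \ldots \times \P^{n_{r-1}}.
\]
This is a surjective morphism of irreducible projective varieties, its target has $\dim S = \sum_{i=1}^{r-1} n_i = (\sum_{i=1}^r n_i) - n_r = (\sum_{i=1}^r n_i) - \min\{n_1,\ldots,n_r\}$, and provided $r \geq 2$ (so that $\dim S \geq n_1 \geq 1$) the hypotheses of \cref{cor:lowerBoundFromMorphism} are met.

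With this choice, \cref{cor:lowerBoundFromMorphism} directly yields
\[
  \injDim{X} \;\geq\; 2 \dim X - \dim S \;=\; 2\big({\textstyle\sum_{i=1}^r n_i}\big) - \Big(\big({\textstyle\sum_{i=1}^r n_i}\big) - \min\{n_1,\ldots,n_r\}\Big) \;=\; 2\big({\textstyle\sum_{i=1}^r n_i}\big) - \min\{n_1,\ldots,n_r\},
\]
which is exactly the desired bound. The point is that to optimize the estimate from \cref{cor:lowerBoundFromMorphism}, one wants $\dim S$ as small as possible among fibrations with $\dim S \geq 1$; among the coordinate projections forgetting a single factor, forgetting the largest factor $\P^{\max n_i}$ would leave $\dim S$ largest, whereas forgetting the \emph{smallest} factor minimizes $\dim S$, so projecting away the smallest $\P^{n_r}$ is the right move.

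It remains to handle the degenerate case $r = 1$, where no nontrivial projection exists and the inequality reads $\injDim{\P^{n_1}} \geq 2n_1 - n_1 = n_1$; this holds trivially since $\injDim{X} \geq \dim X$ for any variety (indeed $\P^{n_1}$ realizes equality via the identity). The main (and only real) obstacle is thus purely bookkeeping: confirming that the chosen projection $\pi$ is genuinely surjective with irreducible source and target and that $\dim S \geq 1$, so that \cref{cor:lowerBoundFromMorphism} applies; once that is in place the arithmetic is immediate. I expect no substantive difficulty beyond verifying these hypotheses and isolating the $r=1$ edge case.
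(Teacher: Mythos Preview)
Your argument contains an arithmetic slip that hides a genuine error in the choice of fibration. You set $\dim S = (\sum_i n_i) - \min_i n_i$ and then compute
\[
2\dim X - \dim S \;=\; 2\big({\textstyle\sum_i n_i}\big) - \Big(\big({\textstyle\sum_i n_i}\big) - \min_i n_i\Big),
\]
but this equals $(\sum_i n_i) + \min_i n_i$, not $2(\sum_i n_i) - \min_i n_i$. So the projection you chose does \emph{not} give the stated bound.

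The underlying issue is the direction of your optimization. In \cref{cor:lowerBoundFromMorphism} the bound $2\dim X - \dim S$ is best when $\dim S$ is \emph{smallest}, but forgetting the smallest factor leaves $\dim S$ as \emph{large} as possible among single-factor projections. The fix, and what the paper does, is to project onto a single factor $\P^{n_i}$, so that $\dim S = n_i$; this yields $\injDim{X} \geq 2(\sum_j n_j) - n_i$ for each $i$, and choosing $i$ with $n_i = \min_j n_j$ gives the claim. Your treatment of the $r=1$ edge case is fine, but the main step needs this correction.
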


\begin{proof}
  This follows from applying \cref{cor:lowerBoundFromMorphism} to the projections $\P^{n_1} \times \ldots \times \P^{n_r} \to \P^{n_i}$ for $i=1,\ldots,r$.
\end{proof}

\subsection{Divisibility in the Picard/class group}

Our main lower bound for injection dimension with respect to fixed line bundles follows. 
It is inspired by and vastly generalizes previous work for (products of) projective spaces in \cite{DJ14,DJ16}.

\begin{thm} \label{thm:lowerBound}
  Let $X$ be a projective variety and let $\Ell$ be a line bundle on $X$. Then $\injDim{X,\Ell^{\otimes k}} \geq 2\dim X$ for all $k \geq 2$.
\end{thm}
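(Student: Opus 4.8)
The plan is to pass to the affine cone, use the $k$-th root $\Ell$ of $M := \Ell^{\otimes k}$ to produce a \emph{free} action of the cyclic group $\Z_k$ on the punctured cone, and then play this freeness against a standard connectedness theorem. We may assume $\injDim{X,M} < \infty$ (otherwise there is nothing to prove), and, restricting to a top-dimensional irreducible component, that $X$ is irreducible with $n := \dim X \geq 1$ (the case $n=0$ being trivial). By \eqref{eq:lineBundleConditions}, $M$ is then ample and globally generated, and $\Ell$ is ample as well. Let $R := \bigoplus_{d \geq 0} H^0(X,\Ell^{\otimes d})$ be the section ring of $\Ell$, so that $C := \Spec R$ is the affine cone over $X$ with vertex $o$, of dimension $n+1$, carrying the scaling $\C^*$-action. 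Its Veronese subring $S := \bigoplus_{d\geq 0} H^0(X,M^{\otimes d})$ is exactly the invariant ring $R^{\Z_k}$ for $\Z_k = \{\xi \in \C^* \mid \xi^k = 1\} \subseteq \C^*$, and the quotient map $q \colon C \to \Spec S$ is surjective with fibres the $\Z_k$-orbits.

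Now fix a subspace $V \subseteq H^0(X,M) = S_1$ with $\varphi_V \colon X \injTo \P(V^*)$ injective and put $m := \dim V$; the goal is $m \geq 2n+1$. By \cref{lem:injectiveOnAffineCones}, the cone map $\widehat{\varphi}_V \colon \Spec S \to V^*$ is injective, so its pullback $\pi := \widehat{\varphi}_V \circ q \colon C \to V^*$, given by $m$ functions $\pi_1,\ldots,\pi_m \in V \subseteq R_k$ homogeneous of degree $k$, satisfies $\pi(p) = \pi(p')$ if and only if $p' \in \Z_k \cdot p$. Consider the $m$ functions $g_i := \pi_i \otimes 1 - 1 \otimes \pi_i$, each homogeneous of total degree $k$ on $C \times C = \Spec(R \otimes R)$. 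Their common zero locus is the set of pairs $(p,p')$ with $\pi(p) = \pi(p')$, which equals $\bigcup_{\xi \in \Z_k} \Gamma_\xi$, where $\Gamma_\xi := \{(p,\xi\cdot p) \mid p \in C\}$ is the graph of $\xi$.

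The crucial point -- and the reason the hypothesis $\Ell \in \Pic(X)$ is essential -- is that the $\C^*$-action, and hence the $\Z_k$-action, is \emph{free} on $C \setminus \{o\}$: away from the vertex, $C$ is the complement of the zero section in the total space of the honest line bundle $\Ell^{\vee}$, a principal $\C^*$-bundle over $X$ on which scaling acts freely. Consequently, for $\xi \neq \xi'$ the graphs meet only at the vertex, $\Gamma_\xi \cap \Gamma_{\xi'} = \{(o,o)\}$. Passing to $P := \Proj(R \otimes R)$ with the total grading -- an irreducible projective variety of dimension $2n+1$ -- the images $P_\xi := \Proj \Gamma_\xi \cong X$ are therefore pairwise disjoint, each of dimension $n \geq 1$, so $\bigsqcup_{\xi \in \Z_k} P_\xi$ is a \emph{disconnected} closed subset of $P$ with $k \geq 2$ components. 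On the other hand, this set is cut out in $P$ by the $m$ members $g_1,\ldots,g_m$ of the ample linear system $|\O_P(k)|$. By the connectedness theorem for intersections with ample divisors (an effective ample divisor on an irreducible projective variety of dimension $\geq 2$ is connected, applied inductively), the intersection of $P$ with $m$ such divisors is connected whenever $m \leq \dim P - 1 = 2n$. Since our intersection is disconnected, we conclude $m \geq 2n+1$, that is $\dim V \geq 2\dim X + 1$, whence $\injDim{X,\Ell^{\otimes k}} \geq 2\dim X$.

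The main obstacle is establishing and correctly exploiting the freeness of the $\Z_k$-action, which is precisely where honest line bundles behave better than arbitrary Weil divisor classes: for a class in $\Cl(X) \setminus \Pic(X)$ the cone acquires finite stabilizers, the graphs $\Gamma_\xi$ can meet away from the vertex, the $P_\xi$ may fuse into a connected set, and the argument collapses -- this is exactly the gap bridged (with a weaker conclusion) by \cref{thm:lowerBoundReflexive}. A secondary, routine technical point is to verify that $C = \Spec R$ is genuinely the contraction of the zero section of $\Ell^{\vee}$ to a single vertex, so that freeness persists on the affine cone, and that the relevant class $\O_P(k)$ is ample so that the connectedness theorem applies; both follow from ampleness of $\Ell$.
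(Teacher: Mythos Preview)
Your approach is the same as the paper's: form the section ring $R$ of $\Ell$, pass to $\Proj(R\otimes_\C R)$, show that the ``diagonal'' locus breaks into $k$ disjoint translates under the $\Z_k$-action, and conclude by a connectedness theorem. Two technical points are handled more loosely than in the paper.

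First, your justification of freeness via ``$C\setminus\{o\}$ is the complement of the zero section in the total space of $\Ell^\vee$'' is not immediate: that identification can fail when $R$ is not generated in degree one or $X$ is not normal. The paper instead argues directly from ampleness: for $d\gg 0$ the line bundle $\Ell^{\otimes d}$ is globally generated, so $\sqrt{(R_d)}\supseteq R_{>0}$; choosing such $d$ with $\xi^d\neq 1$ shows $Y\cap(1\times\xi)\cdot Y\subseteq V(R_d\otimes 1+1\otimes R_d)=\emptyset$ in $\Proj(R\otimes_\C R)$. This is the actual content of your ``routine technical point''.

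Second, the sentence ``an effective ample divisor on an irreducible projective variety of dimension $\geq 2$ is connected, applied inductively'' does not work as written: after one cut the divisor need not be irreducible, so the hypothesis is lost at the next step. There is also the prior question of whether $\O_P(k)$ is a line bundle on $P=\Proj(R\otimes_\C R)$, since $R\otimes_\C R$ is typically not generated in degree one. The paper sidesteps both issues by observing that the $2m$ elements $\pi_i\otimes 1$ and $1\otimes\pi_j$ in $(R\otimes_\C R)_k$ have no common zero on $P$ (because $\varphi_V$ is a morphism), hence define a genuine morphism $\psi\colon P\to\P(V^*\times V^*)\cong\P^{2m-1}$; your locus $V(g_1,\ldots,g_m)$ is then $\psi^{-1}(\Delta)$ for the diagonal linear subspace $\Delta\cong\P^{m-1}$, and a single application of the connectedness theorem for preimages of linear spaces (\cite[Theorem~3.3.3]{Laz04}) gives $m\geq\dim\psi(P)=2n+1$ from disconnectedness. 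This replaces your induction by one clean step.
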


\begin{proof}
  By restricting to a top-dimensional component, we may assume that $X$ is irreducible. Fix $k \geq 2$. We may assume that $\injDim{X,\Ell^{\otimes k}} < \infty$, since the claim is otherwise trivial. Then $\Ell$ is ample by \eqref{eq:lineBundleConditions}, so by \cite[Example~1.2.22]{Laz04}, its section ring $R := \bigoplus_{d \geq 0} H^0(X,\Ell^{\otimes d})$ is a finitely generated graded algebra over $R_0 = H^0(X,\O_X) = \C$, and we have $\Proj R \cong X$. The $\C$-algebra $S := R \otimes_\C R$ is then also finitely generated, and it inherits a grading $S = \bigoplus_{d \geq 0} S_d$ with graded pieces $S_d = \bigoplus_{i=0}^d R_i \otimes_\C R_{d-i}$.
  
  The Veronese subalgebra $R^\supind{k} := \bigoplus_{d \geq 0} R_{kd}$ is the section ring of the line bundle $\Ell^{\otimes k}$ and we have $\Proj R^\supind{k} \cong \Proj R \cong X$. Note that $R^\supind{k}$ is the invariant ring under the degree-preserving action of the cyclic group $\Z_k = \{\xi \in \C^* \mid \xi^k = 1\}$ on $R$ given by
  \[\Z_k \times R_d \to R_d, \qquad (\xi,f) \mapsto \xi^d f.\]

  Let $V \subseteq H^0(X,\Ell^{\otimes k}) = R_k$ be a non-zero subspace such that $\varphi_V \colon X \injTo \P(V^*)$ is an injective morphism. We aim to show that $\dim \P(V^*) \geq 2\dim X$.
  We consider the following commutative diagram:
  \begingroup \footnotesize
  \[\begin{tikzcd}[column sep=small]
  &
  \Spec R^\supind{k} \otimes_\C R^\supind{k}
  \arrow{dr}[swap]{\hat{\varphi}_V \times \hat{\varphi}_V}
  \arrow[dashed]{dl}
  & 
  &
  \Spec R \otimes_\C R
  \arrow[dashed]{dr}
  \arrow{dl}{\hat{\psi}}
  \arrow{ll}{/{(\Z_k \times \Z_k)}}[swap]{\pi}
  &
  \\ 
  \Proj R^\supind{k} \times \Proj R^\supind{k}
  \arrow{dr}{\varphi_V \times \varphi_V}
  & 
  & 
  V^* \times V^*
  \arrow[dashed]{dr} 
  \arrow[dashed]{dl}
  &
  & \Proj R \otimes_\C R
  \arrow{dl}[swap]{\psi}
  \\ 
  & 
  \P(V^*) \times \P(V^*)
  &
  &
  \P(V^* \times V^*).
  & 
  \end{tikzcd}\]
  \endgroup
  
  Injectivity of $\varphi_V$ means that the preimage of the diagonal in $\P(V^*) \times \P(V^*)$ under $\varphi_V \times \varphi_V$ is set-theoretically the diagonal in $X \times X$, i.e.,
  \[((\varphi_V \times \varphi_V)^{-1}\Delta_{\P(V^*) \times \P(V^*)})^{\text{red}} = \Delta_{\Proj R^\supind{k} \times \Proj R^\supind{k}}.\]
  On the level of affine cones, the morphism $\hat{\varphi}_V \colon \Spec R^\supind{k} \to V^*$ is injective by \cref{lem:injectiveOnAffineCones}, so this equality lifts to
    \[((\hat{\varphi}_V \times \hat{\varphi}_V)^{-1}\Delta_{V^* \times V^*})^{\text{red}} = \Delta_{\Spec R^\supind{k} \times \Spec R^\supind{k}}.\]
  The natural morphism $\pi \colon \Spec R \otimes_\C R \to \Spec R^\supind{k} \otimes_\C R^\supind{k}$ is the geometric quotient for the action of $\Z_k \times \Z_k$ on $\Spec S = \Spec R \times \Spec R$, so the preimage of the diagonal under $\hat{\psi} := (\hat{\varphi}_V \times \hat{\varphi}_V) \circ \pi$ is the $\Z_k \times \Z_k$-orbit of the diagonal in $\Spec R \times \Spec R$:
  \[(\hat{\psi}^{-1}\Delta_{V^* \times V^*})^{\text{red}} = (\Z_k \times \Z_k) \cdot \Delta_{\Spec R \times \Spec R}.\]
  
  Under projectivization of $V^* \times V^*$, the diagonal $\Delta_{V^* \times V^*}$ becomes a linear subspace $L \subseteq \P(V^* \times V^*)$ of dimension $\dim \P(V^*)$. Then the previous equality of sets becomes
  \[(\psi^{-1}L)^{\text{red}} = (\Z_k \times \Z_k) \cdot Y = \bigcup_{\xi \in \Z_k} (1\times \xi) \cdot Y,\]
  where $Y := V(f \otimes 1 - 1 \otimes f \mid f \in R) \subseteq \Proj R \otimes_\C R$.

  We claim that this is in fact a disjoint union, so that $\psi^{-1}(L)$ has $k \geq 2$ connected components. Then, by \cite[Theorem~3.3.3]{Laz04}, this disconnectedness forces
    \[\codim_{\P(V^* \times V^*)} L \geq \dim \im \psi.\]
  Note that $\codim_{\P(V^* \times V^*)} L = \dim \P(V^*)+1$. On the other hand, 
    \[\dim \im \psi = \dim \im \hat{\psi} -1 = \dim \im (\hat{\varphi}_V \times \hat{\varphi}_V) - 1 = \dim \im (\varphi_V \times \varphi_V) + 1 = 2 \dim X + 1,\]
  where the last equality holds by injectivity of $\varphi_V$. We conclude that $\P(V^*) \geq 2\dim X$.
  
  It remains to prove that $(1 \times \xi) \cdot Y$ and $(1 \times \xi') \cdot Y$ are disjoint for $\xi \neq \xi' \in \Z_k$. For this, we may assume that $\xi' = 1$. Note that $(1 \times \xi) \cdot Y = V(f \otimes 1 - 1 \otimes \xi^d f \mid f \in R_d, d \geq 0)$. Therefore,
    \[Y \cap (1 \times \xi) \cdot Y = Y \cap V(f \otimes 1,\: 1 \otimes f \;\mid\; f \in R_d,\: d \geq 0 \text{ with }\xi^d \neq 1).\]
  In particular, we have 
  \begin{equation} \label{eq:intersectionRelPrime}
    Y \cap (1 \times \xi) \cdot Y \subseteq V(R_d \otimes 1 + 1 \otimes R_d) \qquad \text{for all $d \geq 0$ with } (d,k)=1.
  \end{equation}
  
  For $d \gg 0$, the line bundle $\Ell^{\otimes d}$ is globally generated, so the vanishing locus of $R_d = H^0(X,\Ell^{\otimes d})$ inside $\Proj R \cong X$ is empty. This means that for $d\gg 0$, we have $\sqrt{(R_d)} \supseteq R_{\geq 1}$, which implies $\sqrt{(R_d \otimes 1 + 1 \otimes R_d)} \supseteq (R_{\geq 1} \otimes 1 + 1 \otimes R_{\geq 1}) = (R \otimes_\C R)_{\geq 1}$.
  Then \eqref{eq:intersectionRelPrime} shows $Y \cap (1 \times \xi) \cdot Y = \emptyset$, proving the claim.
\end{proof}

We use \cref{thm:lowerBound} to give an example indicating that we cannot do better than what we ask for in \cref{qu:injectIntoTwiceDim}, even as the dimension of the varieties increase:

\begin{ex} \label{ex:noSmallInjectionSingular}
  Let $(X,\Ell)$ be the polarized normal toric variety of dimension~$n \geq 3$ corresponding to the full-dimensional lattice polytope 
    \[P := \conv(0,e_1,\ldots,e_{n-1},e_1+e_2+\ldots+e_{n-1}+n e_n) \subseteq \R^n.\]
  Then $\Pic(X) \cong \Z$ is generated by $\Ell$, and the complete linear system $|\Ell|$ determines a non-injective finite morphism $\varphi_\Ell \colon X \to \P^n$. In particular, $\injDim{X,\Ell^{\otimes k}} = \infty$ for all $k \leq 1$. For $k \geq 2$, \cref{thm:lowerBound} shows that $\injDim{X,\Ell^{\otimes k}} \geq 2\dim X$. In particular, $\injDim{X} \geq 2\dim X$, i.e., $X$ cannot be injected to $\P^s$ for $s < 2 \dim X$. For smooth examples with the same property see \cref{ex:noSmallInjectionSmooth}.
\end{ex}

In the case of normal varieties, \cref{thm:lowerBound} can be sharpened for singular situations, replacing the assumption on divisibility in the Picard group by divisibility in the class group. Then we obtain the following bound:

\begin{thm} \label{thm:lowerBoundReflexive}
  Let $X$ be a normal projective variety, let $D$ be a Weil divisor on $X$, let $k \geq 2$ and assume that $kD$ is Cartier. Then
    \[\injDim{X,\O_X(kD)} \geq 2 \dim X - \delta,\]
  where 
    \[\delta := \min\{1+\dim\big(\bigcap_{q \nmid m} \Bs |mD|\big) \mid q \text{ prime power dividing } k\},\]
  using the convention $\dim \emptyset = -1$.
\end{thm}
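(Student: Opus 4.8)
The plan is to adapt the strategy of \cref{thm:lowerBound} to the reflexive setting, where the failure of $\mathcal{O}_X(mD)$ to be a line bundle for $k \nmid m$ is the only genuinely new complication. First I would reduce to $X$ irreducible and to the case $\injDim{X,\mathcal{O}_X(kD)} < \infty$, so that the divisorial section ring $R := \bigoplus_{d \geq 0} H^0(X,\mathcal{O}_X(dD))$ is a finitely generated normal graded $\mathbb{C}$-algebra with $\Proj R \cong X$, and $R^{(k)} = \bigoplus_{d \geq 0} R_{kd}$ is the section ring of $\mathcal{O}_X(kD)$, realized as the $\mathbb{Z}_k$-invariant subring of $R$ for the grading action $(\xi,f) \mapsto \xi^d f$ on $R_d$. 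I would then set up the identical square of affine cones, $\Proj R \otimes_{\mathbb{C}} R$, and the projection $\psi$ onto $\P(V^* \times V^*)$, for a subspace $V \subseteq R_k$ giving an injection $\varphi_V\colon X \injTo \P(V^*)$. Exactly as before, injectivity of $\varphi_V$ together with \cref{lem:injectiveOnAffineCones} lifts to the statement that $(\psi^{-1}L)^{\text{red}} = \bigcup_{\xi \in \mathbb{Z}_k} (1 \times \xi)\cdot Y$, where $L$ is the projectivized diagonal and $Y := V(f\otimes 1 - 1\otimes f \mid f \in R)$.

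The key new point is that this union need \emph{not} be disjoint, and the multiplicity of overlaps is controlled precisely by the base loci $\Bs|mD|$. For $\xi \neq 1$ of order $k/\gcd(k,j)$, the intersection $Y \cap (1\times\xi)\cdot Y$ sits inside $V(R_d \otimes 1 + 1\otimes R_d)$ for every $d$ with $\xi^d \neq 1$, and the vanishing locus of $R_d = H^0(X,\mathcal{O}_X(dD))$ in $\Proj R \cong X$ is exactly $\Bs|dD|$. Thus the obstruction to disjointness is governed by $\bigcap_{q \nmid m}\Bs|mD|$ as $q$ ranges over the prime powers dividing $k$: for a prime power $q \mid k$, the elements $\xi$ of order $q$ (together with their powers) force intersections supported on $\bigcap_{q\nmid m}\Bs|mD|$. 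I would argue that, after decomposing $\mathbb{Z}_k$ according to its prime-power structure, the number of connected components that remain separated, combined with the dimension of the unavoidable intersection strata, yields that $\psi^{-1}(L)$ fails to be connected in a controlled way: its components can be merged only along loci of dimension at most $\delta - 1$. Applying the connectedness theorem \cite[Theorem~3.3.3]{Laz04} in the form that bounds $\codim_{\P(V^*\times V^*)} L$ below by $\dim \im \psi$ minus the dimension of the locus where the fibre fails to be disconnected, I would obtain $\dim \P(V^*) + 1 = \codim L \geq \dim \im \psi - \delta = (2\dim X + 1) - \delta$, giving the stated bound.

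The main obstacle will be making the connectedness/codimension bookkeeping precise when the overlaps have positive dimension. In \cref{thm:lowerBound} the union was genuinely disjoint, so the $k \geq 2$ components gave a clean application of the connectedness theorem; here I must instead track \emph{how much} connectivity survives, i.e.\ show that removing the common locus (whose dimension is encoded by $\delta$) still disconnects $\psi^{-1}(L)$, and then feed this into a relative version of the connectedness estimate. The cleanest route is probably to restrict attention to a single well-chosen prime power $q \mid k$ realizing the minimum defining $\delta$: for that $q$, the order-$q$ elements $\xi$ produce copies of $Y$ that meet only over $\bigcap_{q\nmid m}\Bs|mD|$, a set of dimension $\delta - 1$, so that $\psi^{-1}(L)$ becomes disconnected after removing a set of that dimension. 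Verifying that the hypotheses of \cite[Theorem~3.3.3]{Laz04} apply to $\psi$ (properness of the relevant morphism and the correct reading of "disconnected modulo a small locus") is the delicate step, and I expect the bound to emerge exactly when $\delta$ measures the dimension of this unavoidable gluing locus.
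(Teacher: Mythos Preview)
Your plan matches the paper's approach closely: the same section ring $R=\bigoplus_{d\ge 0}H^0(X,\O_X(dD))$, the same $\Z_k$-action, the same diagram and identification of $(\psi^{-1}L)^{\mathrm{red}}$ as $\bigcup_{\xi\in\Z_k}(1\times\xi)\cdot Y$, and the same idea of fixing a single prime power $q\mid k$ to extract disconnectedness. Two points need sharpening before this becomes a proof.

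First, \cite[Theorem~3.3.3]{Laz04} only gives \emph{connectedness} of $\psi^{-1}(L)$, which is useless here since the translates of $Y$ may genuinely meet and $\psi^{-1}(L)$ may well be connected. What is required is that $\psi^{-1}(L)$ is \emph{connected in dimension} $\dim\Proj(R\otimes_\C R)-\dim L-2 = 2\dim X-\dim\P(V^*)-1$; the paper invokes \cite[Lemma~3.2.2]{FOV99} for precisely this refinement. Your ``relative version of the connectedness estimate'' is exactly this statement, but the reference you cite does not supply it, and there is no way to salvage the argument from plain connectedness.

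Second, focusing on ``order-$q$ elements $\xi$'' does not by itself disconnect the \emph{full} union $\bigcup_{\xi\in\Z_k}(1\times\xi)\cdot Y$: the other translates may bridge the gaps. The paper's device is to write $q=p^\ell$ and partition $\Z_k$ into its $p$ cosets modulo $p$, setting $Z_i:=\bigcup_j(1\times\zeta^{jp+i})\cdot Y$. For $\xi,\xi'$ in distinct cosets one has $q\mid\ord(\xi\xi'^{-1})$, and since $Y\cap(1\times\xi)\cdot Y$ is (the image of the diagonal of) $\bigcap_{\ord(\xi)\nmid m}\Bs|mD|$, the monotonicity $B_{rq}\subseteq B_q$ gives $\dim(Z_{i_1}\cap Z_{i_2})\le d_q$ for $i_1\neq i_2$. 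This shows the whole preimage is not connected in dimension $d_q+1$, and comparing with the FOV bound yields $\dim\P(V^*)\ge 2\dim X-\delta$.
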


\begin{proof}
  We proceed as in the proof of \cref{thm:lowerBound}, but replace the graded $\C$-algebra $\bigoplus_{d \geq 0} H^0(X,\Ell^{\otimes d})$ by $R := \bigoplus_{d \geq 0} H^0(X,\O_X(dD))$. As before, we only need to consider the case that $X$ is irreducible and that $\injDim{X,\O_X(kD)} < \infty$ (in particular, $D$ is ample).
  
  Consider a non-zero subspace $V \subseteq H^0(X,\O_X(kD)) = R_k$ inducing an injective morphism $X \injTo \P(V^*)$ %
  with $\dim \P(V^*) \leq 2\dim X$. With the same notations as in the previous proof, this injection gives rise to a morphism $\psi \colon \Proj R \otimes_\C R \to \P(V^* \times V^*)$ and a linear subspace $L \subseteq \P(V^* \times V^*)$ of dimension $\dim \P(V^*)$ such that
    \[(\psi^{-1} L)^{\text{red}} = \bigcup_{\xi \in \Z_k} (1\times \xi) \cdot Y,\]
  where $Y := V(f \otimes 1 - 1 \otimes f \mid f \in R) \subseteq \Proj R \otimes_\C R$.
  
  In this setting, it remains no longer true that $(1\times \xi) \cdot Y$ and $(1\times \xi') \cdot Y$ are disjoint for $\xi \neq \xi' \in \Z_k$. Instead, we show that
  \begin{equation} \label{eq:intersectTranslates}
    \dim \big((1\times \xi) \cdot Y \cap (1\times \xi') \cdot Y \big) = d_{\ord(\xi {\xi'}^{-1})},
  \end{equation}
  where for each $r \in \N$, we denote by $d_r$ the dimension of $B_r := \bigcap_{r \nmid m} \Bs |mD| \subseteq X$.
  Denoting $\delta := \min\{1+d_q \mid q \text{ prime power dividing } k\}$, we then show that \eqref{eq:intersectTranslates} implies that
  \begin{equation} \label{eq:connectedInDimension}
    \bigcup_{\xi \in \Z_k} (1\times \xi) \cdot Y \text{ is not connected in dimension } \delta.
  \end{equation}
  But on the other hand, it follows from classical connectedness theorems, in particular \cite[Lemma~3.2.2]{FOV99}, that the preimage of the linear subspace $L$ under the finite morphism $\psi \colon \Proj R \otimes_\C R \to \P(V^* \times V^*)$ is connected in dimension
    \[\dim \Proj R \otimes_\C R - \dim L - 2 = 2 \dim X - \dim \P(V^*) - 1,\]
  so we deduce that $\dim \P(V^*) \geq 2 \dim X - \delta$.
  
  It remains to prove \eqref{eq:intersectTranslates} and \eqref{eq:connectedInDimension}. For \eqref{eq:intersectTranslates}, we may restrict to the case $\xi' = 1$ and we denote $r := \ord(\xi)$. Since $(1 \times \xi) \cdot Y = V(f \otimes 1 - 1 \otimes \xi^m f \mid f \in R_m, m \geq 0)$, we have
    \[Y \cap (1 \times \xi) \cdot Y = Y \cap V(R_m \otimes_\C 1 + 1 \otimes_\C R_m \mid m \geq 0 \text{ with } \ord(\xi) \nmid m).\]
  Consider the commutative diagram
  \begingroup \footnotesize
  \[\begin{tikzcd}[column sep=small]
  &
  \Spec R \otimes_\C R
  \arrow[dashed]{dl}
  \arrow[dashed]{dr}
  \arrow{d}{\hat{\psi}}
  &
  \\
  \Proj R \times \Proj R
  \arrow{d}[swap]{\varphi_V \times \varphi_V}
  &
  V^* \times V^*
  \arrow[dashed]{dl}
  \arrow[dashed]{dr}
  &
  \Proj R \otimes_\C R
  \arrow{d}{\psi}
  \\
  \P(V^*) \times \P(V^*)
  &
  &
  \P(V^* \times V^*).
  \end{tikzcd}\]
  \endgroup
  Note that in $\Proj R \cong X$, we have $V(R_m) = \Bs |mD|$. Hence, the affine cone over $Y \cap (1 \times \xi) \cdot Y$ in $\Spec R \otimes_\C R$ is the diagonal $\Delta_{\hat{B_r} \times \hat{B_r}} \subseteq \hat{B_r} \times \hat{B_r}$, where $\hat{B_r} \subseteq \Spec R$ is the affine cone over $B_r \subseteq \Proj R$. In particular, $\dim Y \cap (1 \times \xi) \cdot Y = d_r$, proving \eqref{eq:intersectTranslates}.
  
  In order to show \eqref{eq:connectedInDimension}, let $q = p^\ell$ be a prime power dividing $k$. 
  Let $\zeta$ be a primitive $k$-th root of unity. For $i \in \{1,\ldots,p\}$ consider the set $Z_i := \bigcup_{j = 1}^{k/p} (1 \times \zeta^{jp+i}) \cdot Y \subseteq \Proj R \otimes_\C R$. Then we have the following equalities of sets:
    \[\bigcup_{\xi \in \Z_k} (1\times \xi) \cdot Y = \bigcup_{i=1}^p Z_i
    \quad \text{and} \quad 
     Z_{i_1} \cap Z_{i_2} = \bigcup_{j_1 = 0}^{k/p} \bigcup_{j_2 = 0}^{k/p} \big((1 \times \zeta^{j_1p+i_1}) \cdot Y \cap (1 \times \zeta^{j_2p+i_2}) \cdot Y\big).\]
  Note that for $i_1 \neq i_2 \in \{1,\ldots,p\}$, the order of $\zeta^{(j_1-j_2)p+(i_1-i_2)}$ is divisible by $q$. Since $d_{rq} \leq d_q$ for all $r \geq 1$, we conclude from \eqref{eq:intersectTranslates} that $\dim Z_{i_1} \cap Z_{i_2} \leq d_q$ for all $i_1 \neq i_2$. Hence, $\bigcup_{\xi \in \Z_k} (1\times \xi) \cdot Y$ is not connected in dimension $d_q + 1$. This establishes \eqref{eq:connectedInDimension} and concludes the proof.
\end{proof}

Our situation in the proof of \cref{thm:lowerBoundReflexive} is very close to the setting in \cite{Rei18}, where small separating sets of invariants for finite group actions on affine varieties are investigated. In fact, we remark that \cref{thm:lowerBoundReflexive} could also be obtained as a consequence of \cite[Theorem~4.5]{Rei18}, if we additionally assumed that the section ring $R = \bigoplus_{d \geq 0} H^0(X,\O_X(dD))$ is integrally closed.

As an example, we apply \cref{thm:lowerBoundReflexive} to weighted projective spaces. With the link between injection dimensions of weighted projective spaces and separating invariants of finite cyclic groups (see \cref{ex:wpsAsSepInv}), the following bound on injection dimensions could also be proved on the basis of \cite[Theorem~3.4]{DJ14}. For the purpose of illustration, we choose to base our proof on \cref{thm:lowerBoundReflexive}.

\begin{cor} \label{cor:weightedProjectiveBounds}
  Consider a weighted projective space $\P(q_0,\ldots,q_n)$ that is well-formed, i.e., $\gcd(q_0, \ldots, \widehat{q_i}, \ldots, q_n) = 1$ for all $i$. Let $\ell \geq 2$ be minimal such that
  \[\lcm(q_{i_1},\ldots,q_{i_\ell}) = \lcm(q_0,\ldots,q_n) \qquad \text{holds for all  $i_1, \ldots, i_\ell$ distinct}.\]
  Let $\Ell$ be the ample line bundle generating the Picard group. Then
  \[\injDim{\P(q_0,\ldots,q_n),\Ell^{\otimes k}} \geq
  \begin{cases} 
  n + \ell - 2 &\text{if } k = 1, \\ 2n &\text{if } k \geq 2. \end{cases}\]
\end{cor}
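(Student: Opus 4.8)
The plan is to apply \cref{thm:lowerBoundReflexive} with the Weil divisor $D$ generating the class group $\Cl(X) \cong \Z$ of the well-formed weighted projective space $X = \P(q_0,\ldots,q_n)$. Recall from \cref{ex:wpsAsSepInv} that the section ring $R = \bigoplus_{d \geq 0} H^0(X, \O_X(dD))$ is identified with $\C[x_0,\ldots,x_n]$ graded by $\deg x_i = q_i$, so that $R_m$ is spanned by the monomials $x_0^{a_0}\cdots x_n^{a_n}$ with $\sum_i a_i q_i = m$. The ample generator $\Ell$ of $\Pic(X)$ corresponds to $\O_X(kD)$ with $k = \lcm(q_0,\ldots,q_n)$, since a line bundle on a weighted projective space is Cartier precisely when its degree is divisible by every $q_i$. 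So the case $k \geq 2$ of the statement is really \cref{thm:lowerBoundReflexive} applied with multiplier $k$ replaced by any $k' = (\text{that } k) \cdot (\text{the exponent})$, and the heart of the matter is to compute the defect $\delta$ defined there.

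\textbf{Computing the base loci.} The key combinatorial step is to identify $\Bs |mD| \subseteq X$ for each $m$. Writing $[p] = [a_0 : \cdots : a_n] \in X$, a point lies in $\Bs|mD|$ exactly when every monomial of weighted degree $m$ vanishes at it, i.e.\ when $p$ lies in the coordinate subspace indexed by $\{i : q_i \nmid m\}$ in a suitable sense. More precisely, $x_0^{a_0}\cdots x_n^{a_n}$ has degree $m$ only if it uses variables whose weights can sum to $m$; the subvariety $\Bs|mD|$ is the union of coordinate strata $\{x_i = 0 \text{ for } i \notin I\}$ taken over index sets $I$ for which $\lcm\{q_i : i \in I\}$ (or the relevant numerical semigroup condition) fails to reach $m$. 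The plan is to show that for $m$ with $q \nmid m$ (where $q$ is a prime power dividing $k = \lcm(q_i)$), the intersection $\bigcap_{q \nmid m} \Bs|mD|$ consists of those coordinate points $[e_j]$ whose weight $q_j$ is not divisible by $q$, together with coordinate subspaces spanned by indices $i$ with $q \nmid q_i$. Then $\delta = \min_q \big(1 + \dim \bigcap_{q \nmid m} \Bs|mD|\big)$, and the definition of $\ell$ as the minimal size of an index set achieving the full $\lcm$ is designed precisely so that this minimal base-locus dimension equals $n - \ell$, giving $\delta = n - \ell + 1$.

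\textbf{Assembling the bound.} Granting $\delta = n-\ell+1$, \cref{thm:lowerBoundReflexive} yields $\injDim{X,\Ell^{\otimes k}} \geq 2\dim X - \delta = 2n - (n-\ell+1) = n + \ell - 1$ for $k \geq 2$ — but the claim for $k\geq 2$ is only $2n$, which is the \emph{stronger} bound coming from \cref{thm:lowerBound} (divisibility in $\Pic$, not merely $\Cl$), so for $k \geq 2$ one simply invokes \cref{thm:lowerBound} directly since $\Ell^{\otimes k}$ is the $k$-th power of the Cartier bundle $\Ell$. The reflexive bound \cref{thm:lowerBoundReflexive} is instead what produces the $k=1$ case: here $\Ell = \O_X(kD)$ with $k = \lcm(q_i)$ is itself a nontrivial power of $D$ in the class group, and applying the theorem gives exactly $\injDim{X,\Ell} \geq 2n - \delta = n+\ell-2$ once $\delta = n-\ell+2$ is established.

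\textbf{The main obstacle} will be the precise base-locus computation and pinning down $\delta$: I must translate the $\lcm$-condition defining $\ell$ into the correct dimension count for $\bigcap_{q \nmid m} \Bs|mD|$, being careful about which prime power $q \mid k$ is the minimizer and about the $+1$ shift (and the $\dim\emptyset = -1$ convention) between the $k=1$ and $k\geq 2$ formulas. The well-formedness hypothesis $\gcd(q_0,\ldots,\widehat{q_i},\ldots,q_n)=1$ is what guarantees $\Cl(X)\cong\Z$ with $D$ the generator and prevents degenerate base loci, and verifying that it forces the base loci to be exactly the expected coordinate strata is the delicate point I would check most carefully.
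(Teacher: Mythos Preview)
Your overall strategy matches the paper's exactly: invoke \cref{thm:lowerBound} for $k\geq 2$ and \cref{thm:lowerBoundReflexive} for $k=1$ with $D$ the generating Weil divisor and $a=\lcm(q_0,\ldots,q_n)$. However, your plan to compute the base loci $\Bs|mD|$ \emph{exactly} is both unnecessary and the source of the wavering between $\delta=n-\ell+1$ and $\delta=n-\ell+2$ in your write-up. The paper never computes $\delta$ exactly; since $\delta$ is a \emph{minimum} over prime powers $q\mid a$, it suffices to exhibit a single prime power for which the quantity $1+\dim\bigl(\bigcap_{q\nmid m}\Bs|mD|\bigr)$ is small. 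By minimality of $\ell$, there are indices $i_1,\ldots,i_{\ell-1}$ with $\lcm(q_{i_1},\ldots,q_{i_{\ell-1}})<a$, so some prime power $p^r\mid a$ divides none of $q_{i_1},\ldots,q_{i_{\ell-1}}$. Then each $x_{i_j}$ lies in $R_{q_{i_j}}$ with $p^r\nmid q_{i_j}$, giving immediately
\[
\bigcap_{p^r\nmid m}\Bs|mD| \;=\; V\bigl(R_m : p^r\nmid m\bigr)\;\subseteq\; V(x_{i_1},\ldots,x_{i_{\ell-1}}),
\]
which has dimension $n-\ell+1$. Hence $\delta\leq n-\ell+2$ and the bound $2n-\delta\geq n+\ell-2$ follows. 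Your description of $\bigcap_{q\nmid m}\Bs|mD|$ as consisting of coordinate subspaces indexed by $\{i: q\nmid q_i\}$ is not literally correct (the structure involves numerical semigroup conditions, as you note yourself), and attempting that route would be the ``delicate point'' you anticipate---but it is entirely avoidable.
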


\begin{proof}
  The weighted projective space $X := \P(q_0,\ldots,q_n)$ is $\Proj R$, where $R = \C[x_0,\ldots,x_n]$ is graded via $\deg(x_i) = q_i$. The twisting sheaf $\O_{\Proj R}(1)$ is the reflexive sheaf $\O_X(D)$ on $X$ corresponding to a Weil divisor $D$ generating the class group. For all $m \in \Z$, we have $H^0(X,\O_X(mD)) = R_m$. The ample line bundle generating $\Pic(X)$ is $\Ell = \O_{\Proj R}(a) = \O_X(aD)$, where $a := \lcm\{q_0,\ldots,q_n\}$.
  
  \cref{thm:lowerBound} shows that $\injDim{X,\Ell^{\otimes k}} \geq 2n$ for all $k \geq 2$. We are therefore only concerned with establishing a lower bound for $\injDim{X,\Ell}$ based on \cref{thm:lowerBoundReflexive}. 
  We may assume that not all weights $q_i$ are equal to $1$ (otherwise, $X = \P^n$ and the claim $\injDim{\P^n,\O(1)} \geq n$ is trivial). Then $a \geq 2$ and, by minimality of $\ell$, there exists a prime power $p^r$ dividing $a$ and weights $q_{i_1}, \ldots, q_{i_{\ell-1}}$ not divisible by $p^r$. Note that
  \[\bigcap_{p^r \nmid m} \Bs |mD| = V(R_m \mid p^r\!\nmid\!m) \subseteq V(x_{i_1},\ldots,x_{i_{\ell-1}}).\]
  We conclude with \cref{thm:lowerBoundReflexive} that
  \[\injDim{X,\O_X(aD)} \geq 2n-1-\dim \big(\bigcap_{p^r \nmid m} \Bs |mD|\big) \geq n+\ell-2. \qedhere\]
\end{proof}

\begin{ex} \label{ex:minNotVeryAmple}
  The weighted projective space $\P(1,6,10,15)$ injects to $\P^4$ (see \cref{ex:injNotVeryAmple} and \cref{thm:weightedProjectiveConstr}), but not by linear projections from any embedding. This follows from \cref{thm:lowerBound} because the ample line bundle $\O(30)$ generating the Picard group is not very ample (see \cref{ex:injNotVeryAmple}), whence all very ample line bundles admit a root of some order in $\Pic(X) \cong \Z$.
\end{ex}

The case $\ell = 2$ in \cref{cor:weightedProjectiveBounds} is the case of (non-weighted) projective spaces, for which explicit constructions in the context of separating invariants \cite[Proposition~5.2.2]{Duf08} show that the above bound is sharp. In the next case, $\ell = 3$, we show in \cref{thm:weightedProjectiveConstr} that the bound in \cref{cor:weightedProjectiveBounds} remains sharp when one of the weights is $1$. However, the next example shows that the latter assumption cannot be dropped.

\begin{ex} \label{ex:wpsBoundNotSharp}
  \cref{cor:weightedProjectiveBounds} shows that $\injDim{\P(2,2,3,3),\O(6)} \geq 4$. We show that actually $\injDim{\P(2,2,3,3),\O(6)} = 5$ holds. Indeed, 
  note that the global sections of $\O(6)$ are just $\Sym^3 (\C^2)^* \oplus \Sym^2 (\C^2)^*$ and the secant locus of $Y := \im(\varphi_{\O(6)}) \subseteq \P(\Sym^3 \C^2 \oplus \Sym^2 \C^2)$
  is
  \[\sigma_2^\circ(Y) = \{[v\oplus w] \mid (v = 0 \text{ or } [v] \in \sigma_2^\circ(C_3)) \text{ and } (w = 0 \text{ or } [w] \in \sigma_2^\circ(C_2))\},\]
  where $C_d \subseteq \P \Sym^d \C^2$ for $d\in\{2,3\}$ denotes the $d$-th rational normal curve. The secant locus of the plane conic $C_2$ fills its ambient space, while the secant locus of the twisted cubic curve $C_3$ consists of all points that cannot be written as $[v_1 v_2^2] \in \P(\Sym^3 \C^2)$ with $\{v_1, v_2\}$ a basis of $\C^2$. Hence, a linear subspace $L \subseteq \P(\Sym^3 \C^2 \oplus \Sym^2 \C^2)$ does not meet $\sigma_2^\circ(Y)$ if and only if it does not meet the center of the projection $\pi_1 \colon \P(\Sym^3 \C^2 \oplus \Sym^2 \C^2) \dashrightarrow \P(\Sym^3 \C^2)$ and $\pi_1(L) \cap \sigma_2^\circ(C_3) = \emptyset$. Every line in $\P(\Sym^3 \C^2)$ intersects $\sigma_2^\circ(C_3)$ by \cref{prop:secantAvoidance} and \cref{rem:dimPlusOne}, so $\pi_1(L)$ and hence $L$ needs to be a point. In fact, $L$ can be chosen to be the point $[v_1 v_2^2 \oplus 0] \notin \sigma_2^\circ(Y)$ (for some basis $\{v_1, v_2\}$ of $\C^2$). By \cref{prop:secantAvoidance}, this shows $\injDim{\P(2,2,3,3),\O(6)} = 5$.
\end{ex}

\cref{ex:wpsBoundNotSharp} generalizes easily to show that an $n$-dimensional weighed projective space of the form $\P(2,\ldots,2,d,d)$ with $n,d \geq 3$ and $d$ odd cannot be injected to $\P^{2n-2}$, while the bound from \cref{cor:weightedProjectiveBounds} only established that injections to $\P^{2n-3}$ are impossible. In fact, we always expect the following:

\begin{conjecture} \label{conj:twoWeightsWPS}
  Let $d,e \geq 2$ be relatively prime and let $r,s \geq 2$. Then 
    \[\injDim{\P(\underbrace{d,\ldots,d}_{r},\underbrace{e,\ldots,e}_{s})} = 2(r+s-1)-1.\]
\end{conjecture}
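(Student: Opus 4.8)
The plan is to reduce the computation to the single ample generator $\Ell := \O(de)$ of $\Pic(X) \cong \Z$ and then bound $\injDim{X,\Ell}$ from both sides via the secant geometry of \cref{prop:secantAvoidance}. Write $X = \P(d,\dots,d,e,\dots,e)$ (with $r$ copies of $d$ and $s$ of $e$), so $\dim X = r+s-1$. Since $X$ is well-formed and $\gcd(d,e)=1$ forces $\lcm = de$, every line bundle admitting an injection is a positive power of $\Ell$; and \cref{thm:lowerBound} gives $\injDim{X,\Ell^{\otimes k}} \geq 2\dim X > 2(r+s-1)-1$ for all $k \geq 2$, so it suffices to prove $\injDim{X,\Ell} = 2(r+s-1)-1$. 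A degree count in the Cox ring $\C[x_1,\dots,x_r,y_1,\dots,y_s]$ (with $\deg x_i = d$, $\deg y_j = e$) identifies $H^0(X,\Ell) = \Sym^e (\C^r)^* \oplus \Sym^d (\C^s)^*$: the monomials of weighted degree $de$ are exactly the degree-$e$ monomials in the $x$'s and the degree-$d$ monomials in the $y$'s. Thus $\varphi_\Ell$ maps $X$ into $\P(A \oplus B)$ with $A = \Sym^e \C^r$, $B = \Sym^d \C^s$, via $[x:y] \mapsto [\nu_e(x)\oplus \nu_d(y)]$, where $\nu_e,\nu_d$ are the Veronese (power) maps; let $V_e = \nu_e(\P^{r-1}) \subseteq \P(A)$ and $V_d = \nu_d(\P^{s-1}) \subseteq \P(B)$ be the associated Veronese varieties and $Y := \varphi_\Ell(X)$.

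First I would establish the secant structure, generalizing \cref{ex:wpsBoundNotSharp}:
\[\sigma_2^{\circ}(Y) = \{[v\oplus w] \mid (v = 0 \text{ or } [v]\in\sigma_2^{\circ}(V_e)) \text{ and } (w = 0 \text{ or } [w]\in\sigma_2^{\circ}(V_d))\}.\]
The inclusion ``$\subseteq$'' is immediate from the $\oplus$-decomposition of a secant combination $\alpha(\nu_e(x)\oplus\nu_d(y)) + \beta(\nu_e(x')\oplus\nu_d(y'))$. For ``$\supseteq$'', given $v = \nu_e(x_1)+\nu_e(x_2)$ and $w = \nu_d(y_1)+\nu_d(y_2)$ one checks that $[v\oplus w]$ lies on the secant line through $[\nu_e(x_1)\oplus\nu_d(y_1)]$ and $[\nu_e(x_2)\oplus\nu_d(y_2)]$; here coprimality is what lets one absorb the two scalars independently into the Veronese arguments of each summand, decoupling the two factors. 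Equivalently, the affine cone over $\sigma_2^{\circ}(Y)$ is the product $C_e \times C_d \subseteq A \oplus B$ of the affine cones $C_e, C_d$ over $\sigma_2^{\circ}(V_e)$ and $\sigma_2^{\circ}(V_d)$.

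For the upper bound I would concatenate optimal injections of the two factors. Since there exist injections $\P^{r-1}\injTo\P^{2(r-1)}$ and $\P^{s-1}\injTo\P^{2(s-1)}$ by forms of degree $e$, resp.\ $d$ (\cite[Proposition~5.2.2]{Duf08}), choose such forms $f_1,\dots,f_{2r-1}$ and $g_1,\dots,g_{2s-1}$. Then
\[X \to \P^{2(r+s-1)-1}, \qquad [x:y]\mapsto [f_1(x):\dots:f_{2r-1}(x):g_1(y):\dots:g_{2s-1}(y)]\]
is a well-defined morphism (the $f$'s, resp.\ $g$'s, have no common zero away from $x=0$, resp.\ $y=0$) and it is injective: if two points agree, the $f$'s recover $x$ up to a scalar $\nu$ and the $g$'s recover $y$ up to a scalar $\rho$ with $\nu^e=\rho^d$, and coprimality (via $1=ad+be$) produces $t$ with $t^d=\nu$, $t^e=\rho$, so that $(x,y)\sim(t^dx,t^ey)$ are equal in $X$. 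This gives $\injDim{X,\Ell} \leq 2(r+s-1)-1$; equivalently, un-mixed concatenation of separating sets for the two factor actions already suffices.

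The matching lower bound is the main obstacle, and it is precisely the assertion that mixing the two groups of coordinates yields no savings. By \cref{prop:secantAvoidance} I must show that every linear $L = \P(W)$ with $L\cap\sigma_2^{\circ}(Y)=\emptyset$ satisfies $\codim L \geq 2(r+s-1)$. The difficulty is that the secant \emph{variety} $\sigma_2(Y)$ is the join of $\sigma_2(V_e)$ and $\sigma_2(V_d)$, of projective dimension at most $2(r+s)-1$, hence strictly larger than the secant \emph{locus} $\sigma_2^{\circ}(Y)$: a pure dimension count only forces $L$ to meet the closure $\sigma_2(Y)$ once $\codim L \leq 2(r+s)-1$, and the remaining gap of one or two down to the target $2(r+s-1)$ must be bridged by showing $L$ cannot be squeezed entirely into the boundary $\partial := \sigma_2(Y)\setminus\sigma_2^{\circ}(Y)$. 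This boundary is governed by the tangential varieties of $V_e$ and $V_d$ (the rank-$3$, border-rank-$2$ loci), and the bookkeeping is further complicated by the secant-defectivity of the quadratic Veronese when $e=2$ or $d=2$, so that $\dim\sigma_2(V_e)$ must be tracked case by case. For $s=2$ the factor $B=\Sym^d\C^2$ is a space of binary forms, and one concludes by projecting onto $\P(B)$ and using that every line meets the secant locus of a rational normal curve (as in \cref{ex:wpsBoundNotSharp} and \cref{rem:dimPlusOne}); this is the content of the remark that the example generalizes to $\P(2,\dots,2,d,d)$. For general $r,s\geq 2$ I would attempt a two-step reduction: first bound the degenerate strata $W\cap(A\oplus 0)$ and $W\cap(0\oplus B)$ through the sharp values $\injDim{\P^{r-1},\O(e)}=2(r-1)$ and $\injDim{\P^{s-1},\O(d)}=2(s-1)$ coming from \cref{thm:lowerBound}, then control the genuinely mixed part by an induction on $r+s$ or a connectedness/Terracini refinement forbidding a linear space of codimension $<2(r+s-1)$ from lying inside the tangential boundary. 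Making this final step robust against the coupling between the two factors is exactly where the argument is presently incomplete.
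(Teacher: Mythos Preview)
The statement you are attempting is a \emph{conjecture} in the paper, not a theorem: the paper does not provide a proof. What the paper does give is exactly the easy direction you also establish --- the upper bound $\injDim{X} \leq 2(r+s-1)-1$ via concatenation of the two Veronese injections from \cite[Proposition~5.2.2]{Duf08} --- and then explicitly states that ``the result in question in \cref{conj:twoWeightsWPS} is therefore the lower bound improving the one obtained from \cref{cor:weightedProjectiveBounds}.'' So your upper-bound argument and reduction to $\Ell = \O(de)$ match the paper's discussion, and your secant-locus description correctly generalizes \cref{ex:wpsBoundNotSharp}.

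Your honest admission that the lower-bound step is ``presently incomplete'' is therefore not a gap relative to the paper --- it is the point of the conjecture. The paper only verifies the lower bound in the case $s=2$ (via the argument you sketch: projecting to the binary-forms factor and invoking \cref{rem:dimPlusOne}), and leaves the general case open. Your proposed inductive/connectedness strategy for general $r,s$ goes beyond anything the paper attempts, but as you recognize, it does not yet constitute a proof.
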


From $\injDim{\P^{r-1},\O(e)} = 2(r-1)$ and $\injDim{\P^{s-1},\O(d)} = 2(s-1)$, see \cite[Proposition~5.2.2]{Duf08}, the existence of an injection $\P(d,\ldots,d,e,\ldots,e) \injTo \P^{2(r+s-1)-1}$ is clear. The result in question in \cref{conj:twoWeightsWPS} is therefore the lower bound improving the one obtained from \cref{cor:weightedProjectiveBounds}.

We finish this section by applying our lower bounds on injection dimensions also to a non-toric example.

\begin{ex} \label{ex:noSmallInjectionSmooth}
  Let $n \geq 3$ and let $q_0,\ldots,q_{n+1} \geq 2$ be pairwise relatively prime. Let $X \subseteq \P(q_0,\ldots,q_{n+1})$ be a general hypersurface of weighted degree $d := q_0 q_1 \cdots q_{n+1}$. Then $X$ is an $n$-dimensional connected smooth projective variety with $\injDim{X} \geq 2n$. Indeed, \cite[Theorem~1]{RS06} implies that the restriction morphism of class groups $\Cl(\P(q_0,\ldots,q_n)) \to \Cl(X)$ is an isomorphism. Since $\P(q_0,\ldots,q_n)$ has only finitely many singular points, $X$ is smooth, so we conclude that $\Pic(X) = \Cl(X)$ is generated by the restriction of the reflexive sheaf $\O(1)$ on $\P(q_0,\ldots,q_{n+1})$ to $X$. On the other hand, by generality of $X$, the homomorphism $H^0(\P(q_0,\ldots,q_n), \O(1)) \to H^0(X,\restr{\O(1)}{X})$ is surjective, so the line bundle $\restr{\O(1)}{X}$ has no global sections. In particular, it cannot give rise to injective morphisms. Every other line bundle is a power of $\restr{\O(1)}{X}$, so \cref{thm:lowerBound} implies that $\injDim{X} \geq 2 \dim X$.
\end{ex}

\section{Explicit constructions of injections}
In this section, we give three specific approaches for explicitly constructing injective morphisms $X \to \P^{2 \dim X}$ with a focus on products of projective spaces and weighted projective spaces.

\subsection{Constructions from tangential varieties} \label{ssec:TangConstr}

The following approach is helpful for producing injections of $\P^m \times \ldots \times \P^m$ for $m \geq 1$:
Consider $\dVec = (d_1,\ldots,d_r) \in \Z_{>0}^r$ and let $D := \sum_{i=1}^r d_i$. The image of the morphism
\begin{align*}
  \psi_{m,\dVec} \colon \P(\C^{m+1}) \times \ldots \times \P(\C^{m+1}) &\to \P(\Sym^{D} \C^{m+1}), \\
  [v_1] \times \ldots \times [v_r] &\mapsto  [v_1^{d_1} \cdots v_r^{d_r}]
\end{align*}
is the \defstyle{Chow--Veronese variety} of type $(m,\dVec)$. In the case $m = 1$, this is also called the \defstyle{coincident root locus} of type $\dVec$, and its understanding is of major interest from the viewpoint of practical applications, see for example \cite{LS16}. We observe that injections of these varieties for suitable $\dVec$ give rise to injections of products of projective spaces:
  
\begin{lemma} \label{lem:ChowVeroneseParam}
  Let $r, m \in \Z_{>0}$ and let $\dVec \in \Z_{>0}^r$ be such that
  \begin{equation} \label{eq:distinctDAssumption}
    \sum_{i \in I} d_i \neq \sum_{j \in J} d_j \qquad \text{for all } I, J \subseteq [r] \text{ with }I \cap J = \emptyset.
  \end{equation}
  Then the morphism $\psi_{m,\dVec}$ is injective. In particular, if $Y \subseteq \P(\Sym^D \C^{m+1})$ denotes the Chow--Veronese variety of type $(m,\dVec)$, then
    \[\injDim{\P^m \times \ldots \times \P^m,\O(k \dVec)} \leq \injDim{Y,\O_Y(k)} \qquad \text{for all } k > 0.\]
\end{lemma}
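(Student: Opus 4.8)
The plan is to treat the two assertions separately, with the injectivity of $\psi_{m,\dVec}$ being the substantive part. The first thing I would do is recast the hypothesis \eqref{eq:distinctDAssumption}: it is equivalent to demanding that the $2^r$ subset sums $\sum_{i \in A} d_i$, for $A \subseteq [r]$, be pairwise distinct. Indeed, if $\sum_{i\in A} d_i = \sum_{i \in B} d_i$, then cancelling the contribution of $A \cap B$ gives $\sum_{i \in A\setminus B} d_i = \sum_{i \in B \setminus A} d_i$ with $A\setminus B$ and $B\setminus A$ disjoint; by \eqref{eq:distinctDAssumption} these two sets must coincide, and being disjoint they are both empty, so $A = B$. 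The converse is immediate. I would record this reformulation as the key combinatorial input.

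For injectivity I would work in $\Sym^\bullet \C^{m+1} \cong \C[x_0,\ldots,x_m]$, a UFD whose degree-one irreducibles are exactly the nonzero linear forms, two of them being associate precisely when they determine the same point of $\P(\C^{m+1})$. Given a tuple $([v_1],\ldots,[v_r])$, I group the indices by the fibers of $i \mapsto [v_i]$: writing the distinct classes as $[u_1],\ldots,[u_p]$ with blocks $S_a := \{i : [v_i] = [u_a]\}$, the image factors as $v_1^{d_1}\cdots v_r^{d_r} = \prod_a u_a^{e_a}$ with $e_a = \sum_{i\in S_a} d_i$. Unique factorization shows that the point $[v_1^{d_1}\cdots v_r^{d_r}] \in \P(\Sym^D\C^{m+1})$ is determined by, and determines, the multiset $\{([u_a], e_a)\}$. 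Since the blocks $S_a$ are disjoint, the reformulated hypothesis makes the exponents $e_a$ pairwise distinct and, moreover, lets each value $e_a$ single out $S_a$ as the \emph{unique} subset of $[r]$ with that subset sum. Hence from the factorization one recovers the partition $\{S_a\}$ and the class $[u_a]$ attached to each block, and therefore the ordered tuple via $[v_i] = [u_a]$ for $i \in S_a$. Two tuples with the same image thus agree, so $\psi_{m,\dVec}$ is injective.

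For the inequality, set $X := \P^m \times \cdots \times \P^m$ and $s := \injDim{Y,\O_Y(k)}$; I may assume $s < \infty$. I would choose $W \subseteq H^0(Y,\O_Y(k))$ with $\dim W = s+1$ and $\varphi_W \colon Y \injTo \P(W^*)$ injective, and compose with $\psi_{m,\dVec} \colon X \to Y$ (injective by the first part) to get an injective morphism $\Phi := \varphi_W \circ \psi_{m,\dVec} \colon X \to \P(W^*)$. Since $\psi_{m,\dVec}$ is given by forms of multidegree $\dVec$, one has $\psi_{m,\dVec}^*\,\O_Y(1) = \O_X(\dVec)$ and hence $\psi_{m,\dVec}^*\,\O_Y(k) = \O_X(k\dVec)$; as $\psi_{m,\dVec}$ is surjective onto the reduced variety $Y$, the pullback of sections $\psi_{m,\dVec}^* \colon H^0(Y,\O_Y(k)) \to H^0(X,\O_X(k\dVec))$ is injective. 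Thus $W' := \psi_{m,\dVec}^*(W) \subseteq H^0(X,\O_X(k\dVec))$ has dimension $s+1$ and $\Phi = \varphi_{W'}$ is an injective morphism into $\P(W'^*) \cong \P^s$, exhibiting an injection by a linear subsystem of $|\O_X(k\dVec)|$ and giving $\injDim{X,\O_X(k\dVec)} \leq s$.

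The main obstacle is the injectivity step, precisely the bookkeeping when several of the points $[v_i]$ coincide, so that multiplicities merge: one must guarantee that the merged data still reconstruct the \emph{ordered} tuple. The reformulation of \eqref{eq:distinctDAssumption} as ``all subset sums distinct'' is exactly what makes each recovered exponent $e_a$ pin down a unique block of indices, and this is the point I would state with the most care.
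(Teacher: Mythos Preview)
Your proof is correct and follows essentially the same approach as the paper's: both arguments use unique factorization in $\Sym^\bullet \C^{m+1}$ to recover the multiset of linear factors with multiplicities, and then invoke the subset-sum condition to reconstruct the ordered tuple. Your version is somewhat more explicit---you spell out the reformulation of \eqref{eq:distinctDAssumption} as ``all subset sums distinct'' and give a fuller account of the pullback argument for the inequality, whereas the paper dispatches the latter in one line via $\psi_{m,\dVec}^*(\O_Y(1)) = \O(\dVec)$---but the structure and key ideas are identical.
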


\begin{proof}
  Let $p \in Y$, then $p = [z]$ for $z \in \Sym^{D} \C^{m+1} \setminus \{0\}$ of the form $z = v_1^{d_1} \cdots v_r^{d_r}$ for some $v_i \in \C^{m+1} \setminus \{0\}$. To show injectivity of $\psi_{m,\dVec}$, we need to see that each $v_i$ is uniquely determined up to scaling. But $\Sym^{\bullet} \C^{m+1} \cong \C[x_0,\ldots,x_m]$ is a unique factorization domain, so $z$ uniquely determines the set $\{[v_1],\ldots,[v_r]\} \subseteq \P \C^{m+1}$ as the linear factors of $z$ up to scaling. Moreover, for each $i$, the factor $v_i \in \C^{m+1}$ appears in $z$ with multiplicity $m_i := \sum_{j \in J_i} d_j$, where $J_i := \{j \mid v_j = v_i\}$. By assumption \eqref{eq:distinctDAssumption}, the integer $m_i$ uniquely determines the set $J_i$. Therefore, each $v_i$ is uniquely determined up to scaling from $z$. Hence, $\psi_{m,\dVec}$ is injective. The second claim follows from  $\psi_{m,\dVec}^*(\O_Y(1)) = \O(\dVec)$.
\end{proof}

In the case $r = 2$, $\dVec = (1,d-1)$ with $d \geq 3$, the Chow--Veronese variety of type $(m,\dVec)$ is 
the tangential variety of the $d$-th Veronese variety $\nu_d(\P^m)\subseteq  \P(\Sym^d \C^{m+1})$. As a simple consequence of \cref{lem:ChowVeroneseParam}, we construct two explicit injections from the cases in which this tangential variety has a secant variety of small dimension (as classified in \cite{CCG02} and \cite{AV18}).

\begin{prop} \label{prop:tangentialInjections}
  The two morphisms
  \begin{align*}
    \P^1 \times \P^1 &\to \P^3 \\
    \scriptstyle [x_0:x_1]\times[y_0:y_1] &
    \scriptstyle \; \mapsto \;
    [x_0 y_0^2 \: : \: x_1 y_0^2+2x_0 y_0 y_1 \: : \: 2 x_1 y_0 y_1 + x_0 y_1^2 \: : \: x_1 y_1^2], \\[0.5em]
    \P^2 \times \P^2 &\to \P^8, \\
    \scriptstyle
    [x_0:x_1:x_2]\times[y_0:y_1:y_2] &
    \scriptstyle \; \mapsto \;
    [x_0y_0^2 \: : \:
     x_1 y_1^2 \: : \:
     x_2y_2^2 \: : \:     
     x_0y_1^2+2x_1y_0y_1 \: : \:
     x_1y_2^2+2x_2y_1y_2 \: : \:
     x_2y_0^2+2x_0y_0y_2 \: : \:
     2x_0y_1y_2+2x_1y_0y_2+2x_2y_0y_1 \: : \:
    \\
    &\phantom{\scriptstyle \; \mapsto \; [}
    \scriptstyle
    x_1y_0^2-x_2y_1^2+2x_0y_0y_1-2x_1y_1y_2 \: : \:
    x_1y_0^2-x_0y_2^2+2x_0y_0y_1-2x_2y_0y_2]
  \end{align*}
  are injective. In particular, $\injDim{\P^1 \times \P^1, \O(1,2)} = 3$ and $\injDim{\P^2 \times \P^2, \O(1,2)} \leq 8$.
\end{prop}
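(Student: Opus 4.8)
The plan is to realize both morphisms as the Chow--Veronese parametrization $\psi_{m,\dVec}$ of \cref{lem:ChowVeroneseParam} with $\dVec = (1,2)$ (so $D = 3$), post-composed with a linear projection, and then to invoke the secant-avoidance criterion of \cref{lem:secantAvoidance}. First I would record that $\dVec = (1,2)$ satisfies \eqref{eq:distinctDAssumption}: the disjoint subsets of $[2]$ have pairwise distinct partial sums among $\{0,1,2,3\}$. Hence \cref{lem:ChowVeroneseParam} already shows that $\psi_{m,(1,2)}$ is injective for every $m$, with image the tangential variety $Y_m := \tau(\nu_3(\P^m)) \subseteq \P(\Sym^3 \C^{m+1})$.

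For $m = 1$, expanding $\psi_{1,(1,2)}([v_1] \times [v_2]) = [v_1 v_2^2]$ in the monomial basis of $\Sym^3 \C^2 \cong \P^3$ reproduces exactly the four coordinates of the first morphism, so it is injective and $\injDim{\P^1 \times \P^1, \O(1,2)} \leq 3$. Since $\P^1 \times \P^1$ is not a projective space, \cref{lem:dimPlusOne} gives $\injDim{\P^1 \times \P^1} \geq \dim(\P^1 \times \P^1)+1 = 3$; as $\injDim{\P^1 \times \P^1, \O(1,2)} \geq \injDim{\P^1 \times \P^1}$, the equality $\injDim{\P^1 \times \P^1, \O(1,2)} = 3$ follows.

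For $m = 2$, I would expand $\psi_{2,(1,2)}([v_1] \times [v_2]) = [v_1 v_2^2]$ in the ten monomials of $\Sym^3 \C^3 \cong \P^9$ and check that the nine coordinates of the second morphism are nine linearly independent linear forms in these monomials, whose common zero locus is the single point $p := [x^2 y + y^2 z + z^2 x]$ (writing $x,y,z$ for the coordinates on $\C^3$). Thus the second morphism equals $\pi_p \circ \psi_{2,(1,2)}$, where $\pi_p \colon \P^9 \dashrightarrow \P^8$ is the projection away from $p$. By injectivity of $\psi_{2,(1,2)}$ and \cref{lem:secantAvoidance}, the composite is injective if and only if $p \notin \sigma_2^{\circ}(Y_2)$; granting this, \cref{prop:secantAvoidance} together with \cref{lem:ChowVeroneseParam} yields $\injDim{\P^2 \times \P^2, \O(1,2)} \leq \codim_{\P^9}\{p\} - 1 = 8$.

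The crux, and the step I expect to be the main obstacle, is verifying $p \notin \sigma_2^{\circ}(Y_2)$, i.e.\ that $f := x^2 y + y^2 z + z^2 x$ is not a scalar multiple of a sum $\ell_1 m_1^2 + \ell_2 m_2^2$ of two decomposable cubics. Rather than invoking the defectivity of $Y_2$ from \cite{CCG02, AV18} (which only certifies that a \emph{generic} point is avoided), I would argue via a smoothness dichotomy for the associated plane cubic. Any element $\ell_1 m_1^2 + \ell_2 m_2^2$ of $\sigma_2^{\circ}(Y_2)$ defines a singular cubic curve: if $m_1, m_2$ are linearly independent, all first partials vanish at the point $\{m_1 = m_2 = 0\}$, while in the degenerate cases the cubic reduces to the form $\ell m^2$ and is reducible or non-reduced. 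On the other hand, a direct computation of the partials shows $V(f) \subseteq \P^2$ is smooth: one sees $f_x + f_y + f_z = (x+y+z)^2$, so any singular point lies on $\{x+y+z=0\}$, and solving the Jacobian system there forces $x = y = z = 0$. A smooth plane cubic is irreducible and reduced, so $p$ cannot lie on any secant line of $Y_2$, completing the argument. The only remaining work is routine: the coordinate bookkeeping that identifies $p$ and the verification that $V(f)$ is smooth.
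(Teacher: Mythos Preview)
Your proposal is correct and follows essentially the same route as the paper: realize both morphisms as the Chow--Veronese map $\psi_{m,(1,2)}$ from \cref{lem:ChowVeroneseParam}, then for $m=2$ compose with the projection from the point $p = [T_0^2T_1+T_1^2T_2+T_2^2T_0]$ and verify $p \notin \sigma_2^\circ(Y_2)$.

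The one genuine difference is in how the secant-avoidance at $p$ is certified. The paper first cites \cite{CCG02} for the defectivity $\dim \sigma_2(Y_2) = 8$ (so that some point works), then for the explicit $p$ simply writes ``one checks'' that it lies off every secant line. You instead give a self-contained argument: any cubic of the form $\ell_1 m_1^2 + \ell_2 m_2^2$ defines a singular plane curve (a common zero of $m_1,m_2$ when they are independent, a double line otherwise), whereas $V(x^2y+y^2z+z^2x)$ is smooth via the identity $f_x+f_y+f_z=(x+y+z)^2$. This is a cleaner and more informative verification than the paper's, and it removes the dependence on \cite{CCG02}. You also make the lower bound $\injDim{\P^1\times\P^1,\O(1,2)} \geq 3$ explicit via \cref{lem:dimPlusOne}, which the paper leaves implicit.
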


\begin{proof}
  By \cref{lem:ChowVeroneseParam}, the morphism $\psi_{1,(1,2)} \colon \P \Sym^1 \C^2 \times \P \Sym^1 \C^2 \to \P \Sym^3 \C^2$ is injective and its image is the tangential surface of the twisted cubic curve in $\P \Sym^3 \C^2$. Picking a basis $\{T_0,T_1\}$ of $\C^2$, we consider the  $\{T_0^3,T_0^2T_1,T_0T_1^2,T_1^3\}$ as a basis of $\Sym^3 \C^2$. In these bases, the morphism $\psi_{1,(1,2)} \colon \P^1 \times \P^1 \injTo \P^3$ maps $[x_0:x_1] \times [y_0:y_1]$ to the point whose coordinates are the coefficients in $T_0$ and $T_1$ of the expression $(x_0T_0+x_1T_1)(y_0T_0 + y_1T_1)^2$. Explicitly, this is the injection written out above. Note that it is given by global sections of $\O(1,2)$.
  
  Similarly, the morphism $\psi_{2,(1,2)}$ injectively maps to the tangential variety of the Veronese surface and it is known that the secant variety of this tangential fourfold is of dimension~$8$ only, see \cite[Proposition~3.2]{CCG02}. By \cref{lem:secantAvoidance}, a projection of the image of $\psi_{2,(1,2)}$ from a general linear space of codimension~$9$ gives an injection $\P^2 \times \P^2 \injTo \P^8$ given by global sections of $\O(1,2)$.
  
  Explicitly, with respect to a basis $\{T_0,T_1,T_2\} \subseteq \C^3$ and the corresponding basis $\{T_i T_j T_k \mid i,j,k\} \subseteq \Sym^3 \C^3$,
  the injection $\psi_{2,(1,2)}$ is described as $\P^2 \times \P^2 \injTo \P^9$, mapping $[x_0:x_1:x_2] \times [y_0:y_1:y_2]$ to the $10$~coefficients in $T_0,T_1,T_2$ of the expression  $(x_0T_0+x_1T_1+x_2T_2)(y_0T_0+y_1T_1+y_2T_2)^2$. The secant variety of its image in $\P \Sym^3 \C^3$ does not fill the entire $9$-dimensional projective space. Explicitly, one checks that $p := [T_0^2 T_1+T_1^2 T_2+T_2^2 T_1] \in \P(\Sym^3 \C^3)$ does not lie on a secant line. In particular, the composition of $\psi_{2,(1,2)}$ with the projection from $p$ gives an injective morphism $\P^2 \times \P^2 \injTo \P^8$. This is the morphism written out above.
\end{proof}

Another application of \cref{lem:ChowVeroneseParam} is the following explicit construction:

\begin{prop} \label{prop:injectP1P1}
  Let $d \geq 3$. Then the following morphism is injective:
    \begin{align*}
      \P^1 \times \P^1 &\injTo \P^4, \\
      \scriptstyle [x_0:x_1] \times [y_0:y_1] & \ \mapsto\  \scriptstyle [x_0 y_0^d \: : \: dx_0 y_0^d y_1+ x_1 y_0^d \: : \: \binom{d}{2} x_0 y_0^{d-1} y_1^2 + d x_1 y_0^d y_1 \: : \: x_0 y_1^d + d x_1 y_0 y_1^{d-1} \: : \: x_1 y_1^d].
    \end{align*}
  In particular, $\injDim{\P^1 \times \P^1, \O(1,d)} \leq 4$.
\end{prop}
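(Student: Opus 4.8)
The plan is to recognize this morphism as a linear projection of the injective Chow--Veronese parametrization $\psi_{1,(1,d)}$ and then to verify, via secant avoidance, that the projection preserves injectivity.

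First I would expand $(x_0 T_0 + x_1 T_1)(y_0 T_0 + y_1 T_1)^d$ in the basis $\{T_0^{d+1-j}T_1^j\}_{j=0}^{d+1}$ of $\Sym^{d+1}\C^2$; its coefficients are $c_j = \binom{d}{j} x_0 y_0^{d-j}y_1^j + \binom{d}{j-1} x_1 y_0^{d-j+1}y_1^{j-1}$. The five entries of the morphism in the proposition are exactly $c_0, c_1, c_2, c_d, c_{d+1}$, so the morphism factors as $\psi_{1,(1,d)}$ followed by the linear projection $\pi$ recording these five coefficients. Since $\dVec = (1,d)$ satisfies hypothesis \eqref{eq:distinctDAssumption} for $d \geq 2$, \cref{lem:ChowVeroneseParam} gives that $\psi_{1,(1,d)}$ is injective with image the Chow--Veronese variety $Y \subseteq \P(\Sym^{d+1}\C^2)$. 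The center of $\pi$ is $\P(W)$ with $W = \{c_0 = c_1 = c_2 = c_d = c_{d+1} = 0\}$, which is precisely the space of forms divisible by $T_0^2 T_1^3$.

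By \cref{lem:secantAvoidance} (equivalently \cref{prop:secantAvoidance}), $\pi|_Y$ is injective if and only if $\P(W) \cap \sigma_2^{\circ}(Y) = \emptyset$. As $\sigma_2^{\circ}(Y)$ consists of the classes of forms $\lambda\, \ell_1 m_1^d + \mu\, \ell_2 m_2^d$ with $\ell_i, m_i$ linear, the task reduces to the purely algebraic claim that no nonzero form $G = \ell_1 m_1^d + \ell_2 m_2^d$ is divisible by $T_0^2 T_1^3$. Such divisibility would force $G$ to vanish to order $\geq 2$ at $[T_0]$ and to order $\geq 3$ at the distinct point $[T_1]$. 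For $d = 3$ this is impossible by degree, since $2 + 3 > d + 1 = 4$. If $m_1, m_2$ are proportional then $G$ collapses to a single term $\ell m^d$, whose only root of multiplicity $\geq 2$ is $[m]$, so it cannot have repeated roots at two distinct points.

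The remaining case, $d \geq 4$ with $m_1, m_2$ linearly independent, is the heart of the argument. Here I would change coordinates to $U = m_1$, $V = m_2$, in which $G = \alpha_1 U^{d+1} + \beta_1 U^d V + \alpha_2 U V^d + \beta_2 V^{d+1}$ is supported only on the four extremal monomials; dehomogenizing with $\tau = V/U$ yields the lacunary polynomial $\tilde p(\tau) = \beta_2\tau^{d+1} + \alpha_2\tau^d + \beta_1\tau + \alpha_1$. A triple root at $\tau_2 \neq 0$ means $\tilde p(\tau_2) = \tilde p'(\tau_2) = \tilde p''(\tau_2) = 0$; since $\tilde p''(\tau) = d\tau^{d-2}[(d+1)\beta_2\tau + (d-1)\alpha_2]$, these three equations determine $\tilde p$ up to scale, and after the rescaling $\tau \mapsto \tau_2 \rho$ (which preserves the extremal support) one is reduced to the single anti-palindromic polynomial $\hat R(\rho) = \rho^{d+1} - \tfrac{d+1}{d-1}\rho^d + \tfrac{d+1}{d-1}\rho - 1$ with its triple root at $\rho = 1$. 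The crux is then to show $\hat R$ has no other repeated root: taking any common root $\rho^\ast$ of $\hat R$ and $\hat R'$ and eliminating the high powers $\rho^{\ast d}, \rho^{\ast d-1}$ by means of $\hat R'(\rho^\ast) = 0$ collapses the two relations to $(d-1)(\rho^\ast - 1)^2 = 0$, forcing $\rho^\ast = 1$. Hence a form with a triple root has all other roots simple and cannot simultaneously carry a double root at a second point, which is the desired contradiction. I expect this elimination, together with cleanly dispatching the degenerate sub-cases where $[T_0]$ or $[T_1]$ coincides with $[m_1]$ or $[m_2]$ (i.e.\ $\tau_2 \in \{0, \infty\}$), to be the main obstacle; the rest is formal. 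Finally, injectivity of the proposition's map follows, and $\psi_{1,(1,d)}^\ast \O_Y(1) = \O(1,d)$ gives $\injDim{\P^1 \times \P^1, \O(1,d)} \leq 4$.
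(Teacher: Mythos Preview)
Your setup is identical to the paper's: both factor the map through the Chow--Veronese parametrization $\psi_{1,(1,d)}$, identify the projection center as $W=\langle T_0^{d-2}T_1^3,\dots,T_0^2T_1^{d-1}\rangle$, and reduce via secant avoidance to showing that no nonzero $G=\ell_1 m_1^d+\ell_2 m_2^d$ lies in $W$. The verification, however, is genuinely different. The paper stays in the $(T_0,T_1)$-coordinates: after ruling out the cases where one of $f,f',g,g'$ is proportional to $T_0$ or $T_1$, it writes $f=T_0-\alpha T_1$, $g=T_0-\beta T_1$ (and primed versions), extracts four coefficient equations, and algebraically massages them into the three quantities $\beta^{-1}+d\alpha^{-1}$, $\beta+d\alpha$, $\beta^2+d\alpha^2$; the argument then concludes via a self-contained auxiliary lemma that $(x,y)\mapsto(x^{-1}+dy^{-1},\,x+dy,\,x^2+dy^2)$ is injective on $(\C^*)^2$.

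Your route instead changes basis to $U=m_1$, $V=m_2$, so the secant structure forces $G$ to be supported on the four extremal monomials $U^{d+1},U^dV,UV^d,V^{d+1}$, while the projection-center condition becomes ``$G$ has a double and a triple root at two prescribed distinct points''. The elimination you sketch is correct: from $\hat R=\hat R'=0$ one obtains (after two substitutions) $\rho^{d-1}=(d-1)\rho-(d-2)$ and $\rho^d=d\rho-(d-1)$, and $\rho$ times the first minus the second gives exactly $(d-1)(\rho-1)^2=0$. The degenerate cases ($\tau_2\in\{0,\infty\}$, or $\beta_2=0$, or the double root at $0$ or $\infty$) all collapse $G$ to $U^d(\text{linear})$ or $V^d(\text{linear})$, which visibly has only one repeated root. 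Your approach is slightly more conceptual---it explains the obstruction as a root-multiplicity constraint on a lacunary family---at the cost of a longer case analysis; the paper's approach is more mechanical but packages the computation into a clean standalone lemma.
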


\begin{proof}
  We use \cref{lem:ChowVeroneseParam} and construct an injection of the tangential variety of the rational normal curve of degree~$d+1$: The morphism $\psi_{1,(1,d)} \colon \P^1 \times \P^1 \to \P \Sym^{d+1} \C^2$ is injective. Fixing a basis $\{T_0,T_1\} \subseteq \C^2$ and the corresponding basis $\{T_{i_0}\cdots T_{i_d}\} \subseteq \Sym^{d+1} \C^2$, it is explicitly given by mapping $[x_0:x_1] \times [y_0:y_1]$ to the $d+1$ coefficients in $T_0,T_1$ of the expression $(x_0T_0+x_1T_1)^d (y_0T_0+y_1 T_1)$. The morphism written out above only maps to the point in $\P^4$ whose coordinates are the coefficients of the monomials $T_0^{d+1}$, $T_0^d T_1$, $T_0^{d-1} T_1^2$, $T_0 T_1^d$ and $T_1^{d+1}$. This is the composition of $\psi_{1,(1,d)}$ with the projection $\P(\Sym^{d+1} \C^2) \to \P(\Sym^{d+1} \C^2/W)$, where $W := \langle T_0^{d-2} T_1^3, \ldots, T_0^2 T_1^{d-1} \rangle$.
  
  By \cref{lem:secantAvoidance}, we need to see that $\P(W)$ does not meet the secant locus of the tangential variety of the rational normal curve of degree~$d+1$. We check this explicitly: Assume that $f^d g - {f'}^d g' \in W \setminus \{0\}$ for some $f,f',g,g' \in \C^2 \setminus \{0\}$. Since every element of $W$ is divisible by $T_0^2 T_1^2$, we note that $f$ is proportional to $T_0$ (resp.\ $T_1$) if and only if $f'$ is. But this cannot be the case, since no nonzero element of $W$ is divisible by $T_0^d$ (resp.\ $T_1^d$). Similarly, $g$ is proportional to $T_0$ (resp.\ $T_1$) if and only if $g'$ is. Since $W_0 := \langle T_0^{d-2} T_1^2, \ldots, T_0^2 T_1^{d-2}, T_0 T_1^{d-1} \rangle \subseteq \Sym^d \C^2$ defines a linear space $\P(W_0)$ not intersecting the secant locus of the rational normal curve $\nu_d(\P^1) \subseteq \P \Sym^d \C^2$, no nonzero element of $W \subseteq T_0 W_0 \cap T_1 W_0$ can be written as $T_0(f^d-{f'}^d)$ or $T_1(f^d-{f'}^d)$.
  
  We may therefore now assume that $f,f',g,g'$ are not proportional to $T_0, T_1$, so we can write (after rescaling):
    \[f = T_0-\alpha T_1, \qquad g = T_0-\beta T_1, \qquad f' = T_0 - \alpha' T_1, \qquad g' = T_0 - \beta' T_1\]
  for non-zero $\alpha, \alpha', \beta, \beta' \in \C$.
  Since $f^d g - {f'}^d g' \in W$, it has zero coefficients for $T_0^{d+1}$, $T_0^d T_1$, $T_0^{d-1} T_1^2$, $T_0 T_1^d$ and $T_1^{d+1}$, i.e.\ 
    \[d\alpha+\beta = d\alpha'+\beta',
    \ {\textstyle \binom{d}{2} }\alpha^2 + d \alpha \beta = {\textstyle \binom{d}{2} }{\alpha'}^2 + d \alpha \beta',
    \ \alpha^d + d\alpha^{d-1} \beta = {\alpha'}^d + d{\alpha'}^{d-1} \beta',
    \ \alpha^d \beta = {\alpha'}^d \beta'\]
  We wish to conclude $\alpha = \alpha'$ and $\beta = \beta'$. As
    \[d\alpha^2+\beta^2 = (d\alpha+\beta)^2 - 2\left({\textstyle \binom{d}{2} }\alpha^2 + d \alpha \beta\right)
    \quad \text{and} \quad
    \frac{d}{\alpha} + \frac{1}{\beta} = \frac{\alpha^d + d\alpha^{d-1} \beta}{\alpha^d \beta},\]
  this follows from the following lemma.
\end{proof}

\begin{lemma}
  For $d \geq 2$, the morphism $(\C^*)^2 \to \C^3$, $(x,y) \mapsto (x^{-1}+dy^{-1},x+dy,x^2+dy^2)$ is injective.
\end{lemma}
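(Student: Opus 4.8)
The plan is to show that a pair $(x,y) \in (\C^*)^2$ can be uniquely recovered from the triple $(a,b,c) := (x^{-1} + dy^{-1},\, x + dy,\, x^2 + dy^2)$. Suppose $(x,y)$ and $(x',y')$ both map to the same $(a,b,c)$; the goal is to conclude $(x,y) = (x',y')$. The strategy is to first exploit only the two polynomial coordinates $b,c$ to confine $x$ and $x'$ to the roots of a single quadratic, and then use the rational coordinate $a$ to rule out the possibility that they are the two \emph{distinct} roots.

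First I would use the coordinates $b$ and $c$ alone. Eliminating $y = (b-x)/d$ from $c = x^2 + dy^2$ yields $(d+1)x^2 - 2bx + (b^2 - dc) = 0$, so both $x$ and $x'$ are roots of one fixed quadratic (with coefficients depending only on $b,c$). If $x = x'$, then $y = (b-x)/d = y'$ and we are done, so I may assume $x \neq x'$; then $x,x'$ are precisely its two distinct roots, and Vieta's formulas give $x + x' = \frac{2b}{d+1}$ and $xx' = \frac{b^2 - dc}{d+1}$.

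Next I would bring in the coordinate $a$. Using $y = (b-x)/d$ once more, rewrite $a = \frac{1}{x} + \frac{d^2}{b-x}$, noting that all denominators are nonzero since $x, x', b - x = dy, b - x' = dy'$ all lie in $\C^*$. Equating the expressions for $a$ in terms of $x$ and of $x'$ and clearing the common factor $x' - x \neq 0$ collapses the identity to $(b-x)(b-x') = d^2\, xx'$. Substituting the two Vieta relations turns this into $d(b^2 - c) = d^2(b^2 - dc)$, which (dividing by $d$ and factoring out $1 - d \neq 0$, valid since $d \geq 2$) is equivalent to the single relation $b^2 = (d+1)c$.

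The final step is to observe that $b^2 = (d+1)c$ is not extra freedom but forces a degeneracy: expanding directly gives $b^2 - (d+1)c = -d(x-y)^2$, so $b^2 = (d+1)c$ holds exactly when $x = y$. In that case $b = (d+1)x = (d+1)x'$ forces $x = x'$, contradicting $x \neq x'$. Hence the case $x \neq x'$ cannot occur, and injectivity follows. I expect the only real subtlety to be the bookkeeping of nonvanishing: one must keep track of which quantities are guaranteed to lie in $\C^*$ so that passing to $a = \frac{1}{x} + \frac{d^2}{b-x}$ and cancelling $x' - x$ are both legitimate. The algebra itself is short once the reduction to the fixed quadratic in $x$ is in place.
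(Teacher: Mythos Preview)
Your argument is correct and complete. The one small expository point is that in the last step you should make explicit that the identity $b^2 - (d+1)c = -d(x-y)^2$ applies equally to $(x',y')$ (since they map to the same $(b,c)$), so that $b^2 = (d+1)c$ forces both $x = y$ and $x' = y'$; you implicitly use both when writing $b = (d+1)x = (d+1)x'$. This is immediate, but worth a clause.

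Your route is genuinely different from the paper's. The paper argues on the level of the ideal $(f,g,h) \subseteq \C[x^{\pm 1},y^{\pm 1}]$: it exhibits a specific linear combination $(d+1)xyf + a(x+y)g - ah$ which is \emph{linear} in $x,y$, and observes that if the fiber were positive-dimensional this linear form would have to be proportional to $g = x + dy - b$, forcing $ab = (d+1)^2$; a second ideal manipulation then yields $(ay - d - 1)^2 \in (f,g)$ and pins down the unique preimage. You instead eliminate $y$ via $y = (b-x)/d$ to obtain a single quadratic in $x$, invoke Vieta for the hypothetical two roots $x,x'$, and use the rational coordinate $a$ to extract the constraint $(b-x)(b-x') = d^2 xx'$, which with Vieta collapses to $b^2 = (d+1)c$ and hence $x = y$. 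Your approach is more elementary and self-contained (no clever ideal membership identities to guess), and it makes transparent exactly where the hypothesis $d \geq 2$ is used (dividing by $1-d$). The paper's approach, by contrast, treats the problem uniformly as a question about the zero locus of an ideal and would generalize more readily to situations where a clean elimination of one variable is not available.
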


\begin{proof}
  The fiber over a point $(a,b,c) \in \C^3$ is given by the vanishing of 
    \[f := x^{-1}+dy^{-1} - a,\qquad g := x+dy-b \qquad \text{and} \qquad h := x^2+dy^2 - c.\]
  The ideal $(f,g,h) \subseteq \C[x^{\pm 1},y^{\pm 1}]$ contains
    \[(d+1)xyf + a(x+y)g - ah = (d(d+1)-ab)x+(d+1-ab)y+ac.\]
  If the fiber consisted of more than one point, then this linear polynomial needs to be proportional to the linear polynomial $g$. Then in particular $d+1-ab = d(d(d+1)-ab)$, i.e., $ab = (d+1)^2$. In this case, 
    \[axyf + (a^2y-ad) g 
     = a^2dy^2+(a-ad^2-a^2b)y-abd
      = d(ay-d-1)^2,
    \]
  hence $ay-d-1 = 0$ for all points in $(\C^*)^2$ on which $f,g,h$ vanish. Then the unique point mapping to $(a,b,c)$ is $(\frac{d+1}{a},\frac{d+1}{a})$.
\end{proof}

\subsection{Inductive constructions} \label{ssec:IndConstr}

In the following, we provide explicit injections of some weighted projective spaces matching the lower bounds on injection dimensions in \cref{cor:weightedProjectiveBounds}. This generalizes the case of (non-weighted) projective spaces worked out in \cite[Proposition~5.2.2]{Duf08}.

\begin{thm} \label{thm:weightedProjectiveConstr}
  Consider a weighted projective space $\P(q_0,q_1,\ldots,q_n)$ with $q_0 = 1$ and $\lcm(q_i,q_j,q_k) = d$ for all distinct $i,j,k$, where $d := \lcm(q_1,\ldots,q_n)$. Let $a_i := d/q_i$ and $b_{ij} := \lcm(q_i,q_j)/q_i$ for all $i,j \in \{0,1,\ldots,n\}$. Then the following are injective morphisms:
  \begin{align*}
    \varphi_1 \colon \P(q_0,q_1,\ldots,q_n) &\to \P^{n+1}, \\
    [x_0:\ldots:x_n]&\mapsto [x_0^d : x_0^{d-q_1} x_1 : x_1^{a_1} + x_0^{d-q_2} x_2 : x_2^{a_2} + x_0^{d-q_3} x_3 : \\
    &\hspace{9em} \ldots : x_{n-1}^{a_{n-1}} + x_0^{d-q_n} x_n : x_n^{a_n}], \\
  \text{for }k \geq 2: \quad \varphi_k \colon \P(q_0,q_1,\ldots,q_n) &\to \P^{2n}, \\
  [x_0:\ldots:x_n] &\mapsto \big[\sum_{\substack{i+j=\ell \\ i \leq j}} x_i^{ka_i-b_{ij}} x_j^{b_{ji}} \mid \ell=0,1,\ldots,2n\big].
  \end{align*}
  In particular, the injection dimensions for these weighted projective spaces are as follows:
    \[\injDim{\P(q_0,q_1,\ldots,q_n), \O(kd)} = 
    \begin{cases}
      \infty  &\text{if } k \leq 0, \\
      n       &\text{if } k = 1 \text{ and } q_1=\ldots=q_n, \\
      n+1     &\text{if } k = 1 \text{ and $q_1,\ldots,q_n$ not all equal}, \\
      2n      &\text{if } k \geq 2.
    \end{cases}\]
\end{thm}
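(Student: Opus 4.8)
The plan is to reduce the injectivity of $\varphi_1$ and $\varphi_k$ to separation of orbits of a finite cyclic group via \cref{lem:injectiveOnAffineCones}. Passing to the Veronese regrading that turns $\O(kd)$ into $\O(1)$ (as in \cref{ex:wpsAsSepInv}), the morphism $\varphi_k$ is injective if and only if its defining forms separate the orbits of $\mu_{kd}=\{\zeta:\zeta^{kd}=1\}$ acting on $\C^{n+1}$ by $\zeta\cdot x_i=\zeta^{q_i}x_i$, and likewise $\varphi_1$ corresponds to $\mu_d$. Equivalently, since all defining forms have the same degree, given two points of $\P(q_0,\ldots,q_n)$ with the same image I may rescale one of their representatives by an element of $\C^*$ so that all forms agree on $x,x'\in\C^{n+1}$, and it then suffices to produce $\zeta$ with $x'=\zeta\cdot x$. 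The single combinatorial fact driving everything is that $q_0=1$ together with $\lcm(q_i,q_j,q_k)=d$ for all distinct triples forces $\lcm(q_i,q_j)=d$ for all distinct $i,j\in\{1,\ldots,n\}$ (apply the triple condition with third index $0$), so the integers $a_i=d/q_i$ are \emph{pairwise coprime}; I will also use the resulting simplifications $b_{ij}=a_i$ for distinct $i,j\geq 1$, together with $b_{0j}=q_j$ and $b_{ii}=1$.

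For $\varphi_1$ I split on $x_0$. If $x_0\neq 0$, then $f_0=x_0^d$ lets me rescale $x'$ by an element of $\mu_d$ to arrange $x_0=x_0'$, after which the triangular shape of the remaining forms (each $f_m=x_0^{d-q_m}x_m$ plus a polynomial in $x_0,\ldots,x_{m-1}$) recovers $x_1=x_1',\ldots,x_n=x_n'$ by induction, so $x=x'$. If $x_0=0$, then $x_0'=0$ and the surviving equations read $x_j^{a_j}={x_j'}^{a_j}$, hence $x_j'=\eta_j x_j$ with $\eta_j\in\mu_{a_j}$ on the support $\{j:x_j\neq 0\}$. Pairwise coprimality enters here: because $\prod_j a_j\mid d$, the reduction map $\mu_d\to\prod_j\mu_{a_j}$, $\zeta\mapsto(\zeta^{q_j})_j$, is the Chinese-remainder isomorphism, so there is a common $\zeta\in\mu_d$ with $\zeta^{q_j}=\eta_j$ for all $j$ in the support, giving $x'=\zeta\cdot x$.

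For $\varphi_k$ with $k\geq 2$, the case $x_0\neq 0$ is entirely analogous, using $g_0=x_0^{kd}$ to normalise $x_0=x_0'$ and the triangular forms $g_m=x_0^{kd-q_m}x_m+(\text{terms in }x_0,\ldots,x_{m-1})$ for $1\le m\le n$ to recover $x=x'$. The genuinely new difficulty, which I expect to be the main obstacle, is the case $x_0=0$. Substituting $u_i:=x_i^{a_i}$ and using $b_{ij}=a_i$, the forms restrict to $g_\ell=\sum_{i<j,\,i+j=\ell}u_i^{k-1}u_j$ (plus a term $u_{\ell/2}^k$ when $\ell$ is even), a system homogeneous of degree $k$ in $u$. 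Letting $m$ be the least index with $u_m\neq 0$, the lowest surviving equation gives $g_{2m}=u_m^k$, so $u_m'=\omega u_m$ for some $\omega\in\mu_k$; an induction up the index set then propagates $u_j'=\omega u_j$ for all $j$, since in $g_{m+j}$ the only monomial involving $u_m$ is exactly $u_m^{k-1}u_j$ while every other monomial has $u$-degree $k$ in variables $u_i$ with $m<i<j$ and is thus $\mu_k$-invariant. Thus $u'=\omega u$ for a single $\omega\in\mu_k$. Writing $x_j'=\eta_j x_j$ with $\eta_j^{a_j}=\omega$, the common value $\omega$ forces the $\eta_j$ to be compatible modulo $k$, so by the Chinese Remainder Theorem (using pairwise coprimality and $k\prod_j a_j\mid kd$) there is $\zeta\in\mu_{kd}$ with $\zeta^{q_j}=\eta_j$ on the support, whence $x'=\zeta\cdot x$. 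The delicate point throughout is the two-layer bookkeeping of roots of unity --- the $\mu_k$-ambiguity coming from the Veronese power interacting with the $\mu_{a_j}$-ambiguity coming from the weights --- which is precisely where pairwise coprimality of the $a_i$ is indispensable. (That the defining forms have no common zero, so that $\varphi_1$ and $\varphi_k$ are genuine morphisms, follows from the same triangular and diagonal structure.)

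Finally I read off the injection dimensions. For $k\leq 0$ the sheaf $\O(kd)$ has no nonzero sections (if $k<0$) or is trivial (if $k=0$), so by \eqref{eq:lineBundleConditions} no injective morphism exists and the value is $\infty$. For $k\geq 2$ the injectivity of $\varphi_k$ gives $\injDim{X,\O(kd)}\leq 2n$, while \cref{thm:lowerBound} applied to $\Ell=\O(d)$ gives $\injDim{X,\O(kd)}\geq 2n$. For $k=1$: if all the $q_i$ are equal then well-formedness forces them to equal $1$, so $X\cong\P^n$ and $\injDim{\P^n,\O(1)}=n$; otherwise one checks $\ell=3$ in \cref{cor:weightedProjectiveBounds}, which yields the lower bound $n+\ell-2=n+1$, matched by the injection $\varphi_1\colon X\hookrightarrow\P^{n+1}$. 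This covers all four cases.
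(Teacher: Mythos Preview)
Your argument is correct and takes a route that is close in spirit to the paper's but organised differently. Both proofs split on whether $x_0$ vanishes; on $D(x_0)$ both exploit the triangular shape of the forms in the same way. The difference lies on $V(x_0)$.

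The paper first reduces to $\gcd(q_1,\ldots,q_n)=1$ via the well-forming isomorphism \eqref{eq:wellFormingWPS} and then argues geometrically: the map $[x_1:\ldots:x_n]\mapsto[x_1^{a_1}:\ldots:x_n^{a_n}]$ is an \emph{isomorphism} $\P(q_1,\ldots,q_n)\xrightarrow{\sim}\P^{n-1}$, which immediately handles $\varphi_1$ on $V(x_0)$ and reduces $\varphi_k$ on $V(x_0)$ to the classical injection $\psi\colon\P^{n-1}\to\P^{2(n-1)}$, proved by induction on~$n$. You instead stay on the affine cone and argue arithmetically: the substitution $u_i=x_i^{a_i}$ leads to the same system, but you track the root-of-unity ambiguities explicitly, obtaining $u'=\omega u$ for a single $\omega\in\mu_k$ by induction on the index, and then lift to $x'=\zeta\cdot x$ via the Chinese Remainder Theorem using pairwise coprimality of the $a_i$. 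This makes the role of the triple-lcm hypothesis (equivalently, pairwise coprimality of the $a_i$) more visible, and it works without first passing to the well-formed model.

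One small point to tighten: when you invoke \cref{cor:weightedProjectiveBounds} for the lower bound at $k=1$, that corollary assumes well-formedness, which the present theorem does not. Either insert the reduction via \eqref{eq:wellFormingWPS} (noting that it carries $\O(d)$ to the Picard generator of the well-formed model and preserves the triple-lcm hypothesis), or rephrase the clause ``well-formedness forces them to equal $1$'': as stated, $\P(1,q,\ldots,q)$ with $q>1$ is permitted, but the conclusion $\injDim{X,\O(d)}=n$ still holds since this space is isomorphic to $\P^n$ with $\O(d)$ corresponding to $\O_{\P^n}(1)$.
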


\begin{proof}
  Recall that for any $r_0,\ldots,r_m \in \Z_{>0}$ and $c \in \Z_{>0}$ with $(r_0,c)=1$, there is an isomorphism
  \begin{align}
    \P(r_0,c r_1,\ldots,c r_n) &\cong \P(r_0,r_1,\ldots,r_m) , \quad [x_0:x_1:\ldots:x_m] \mapsto [x_0^c:x_1:\ldots:x_m].
    \label{eq:wellFormingWPS} 
  \end{align}
  In particular, we may assume that $\gcd(q_1, \ldots, q_n) = 1$, noting that the descriptions of $\varphi_1$ and $\varphi_k$ do not change under composition with the isomorphism \eqref{eq:wellFormingWPS}.
  
  The restriction of $\varphi_1$ to $V(x_0) \cong \P(q_1,\ldots,q_n)$ is given by $\varphi_1([0:x_1:\ldots:x_n]) = [0:0: x_1^{a_1}: x_2^{a_2} : \ldots : x_n^{a_n}]$. By assumption, $\lcm(q_i,q_j) = \lcm(q_0,q_i,q_j) = d$ for all positive $i \neq j$. In particular, if $p^r$ is a prime power in $d$ not dividing $q_i$ for some $i > 0$, then $p^r$ must divide all $q_j$ for $j>0$, $j \neq i$. Composing the corresponding isomorphisms \eqref{eq:wellFormingWPS}, we see that $\P(q_1,\ldots,q_n) \to \P^n$, $[x_1:\ldots:x_n] \mapsto [x_1^{a_1}: x_2^{a_2} : \ldots : x_n^{a_n}]$ is an isomorphism. In particular, $\restr{\varphi_1}{V(x_0)}$ is injective.
  
  The restriction of $\varphi_1$ to the affine open $D(x_0)$ can be checked to be injective by setting $x_0 = 1$.
  We have
  \begin{align*}
    \restr{\varphi_1}{x_0=1} \colon \A^n &\to \A^{n+1} \\ 
    (x_1,\ldots,x_n) &\mapsto (x_1, x_2+x_1^{a_1}, x_3+x_2^{a_2}, \ldots, x_n+x_{n-1}^{a_{n-1}},x_n^{a_n}),
  \end{align*}
  which is a closed embedding, since it is of triangular shape.
  Note that for points in $\varphi(V(x_0))$ the first coordinate is zero, while for points in $\varphi(D(x_0))$ it does not. Hence, the images of $\restr{\varphi}{V(x_0)}$ and $\restr{\varphi}{D(x_0)}$ do not intersect. We conclude that $\varphi$ is injective.
  
  For $\varphi_k$ with $k \geq 2$, we also consider the affine open $D(x_0)$ by setting $x_0=1$, giving
  \begin{align*}
    \restr{\varphi_k}{x_0=1} \colon \A^n &\to \A^{2n} \\ 
    (x_1,\ldots,x_n) &\mapsto (x_1, x_2+x_1^{ka_1}, x_3+x_1^{ka_1-b_{12}}x_2^{b_{21}}, x_4 + x_1^{ka_1-b_{13}}x_3^{b_{31}} + x_2^{ka_2}, \ldots),
  \end{align*}
  which is of triangular shape and therefore a closed embedding. The first coordinate for points in $\varphi_k(V(x_0))$ vanishes, while this is not the case for points in $\varphi_k(D(x_0))$. Hence, with the above, is enough to show that the restriction of $\varphi_k$ to $V(x_0)$ is injective. As above, using \cref{eq:wellFormingWPS}, we have the isomorphism $\restr{\varphi_1}{V(x_0)} \colon V(x_0) \xrightarrow{\cong} \P^n$. Composing with this isomorphism, it only remains to show that
    \[\psi \colon \P^{n-1} \to \P^{2(n-1)}, \qquad [x_1:\ldots:x_n] \mapsto \big[\sum_{\substack{i+j=\ell \\ i \leq j}} x_i^{k-1} x_j \mid \ell=2,3\ldots,2n\big] \]
  is injective. This was originally proved in \cite[Proposition~5.2.2]{Duf08}. It follows by induction on $n$ as follows: The case $n=1$ is trivial. Let $n \geq 2$. The restriction of $\psi$ to $V(x_1) \cong \P^{n-2}$ is injective by the induction hypothesis. The restriction of $\psi$ to the affine open $D(x_1)$ can be checked to be injective by setting $x_1 = 1$. We have
    \begin{align*}
      \restr{\varphi}{x_1=1} \colon \A^{n-1} &\to \A^{2(n-1)} \\ 
      (x_2,\ldots,x_n) &\mapsto (x_2, x_3+x_2^k, x_4+x_2^{k-1}x_3, x_5+\ldots, \ldots),
    \end{align*}
  which is a closed embedding. Since $\psi(V(x_1)) \cap \psi(D(x_1)) = \emptyset$, we conclude that $\psi$ is injective.
\end{proof}

We employ a similar technique to construct an injective morphism $\P^1 \times \P^n \injTo \P^{2(n+1)}$ given by global sections of $\O(d,1)$:

\begin{thm} \label{thm:injectP1Pn}
  Let $n, d \geq 1$. The following is an injective morphism:
  \begin{align*}
  \P^1 \times \P^n {}\injTo{} &\P^{2(n+1)}, \\
  [x_0:x_1] \times [y_0:\ldots:y_n] \mapsto{} 
  & [ x_0^d y_0: x_0^d y_1: \ldots : x_0^d y_{n-1}:  x_0^d y_n : \quad x_1^d y_n : dx_0 x_1^{d-1} y_0 :   \\
  & \phantom{[ }x_1^d y_0+dx_0 x_1^{d-1} y_1 :  x_1^d y_1+dx_0 x_1^{d-1} y_2 : \ldots : x_1^d y_{n-1}+dx_0 x_1^{d-1} y_n].
  \end{align*}
  In particular, $\injDim{\P^1 \times \P^n, \O(d,1)} \leq 2(n+1)$.
\end{thm}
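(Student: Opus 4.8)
The plan is to mimic the proof of \cref{thm:weightedProjectiveConstr}: I would split $\P^1 \times \P^n$ into the divisor $V(x_0) = \{[0:1]\} \times \P^n$ and its complement $D(x_0)$, show that the displayed morphism $\varphi$ is injective on each of the two pieces separately, and finally observe that the two images are disjoint. The disjointness is immediate here, because the first block of coordinates $x_0^d y_0, \ldots, x_0^d y_n$ vanishes simultaneously precisely when $x_0 = 0$ (as $[y_0 : \ldots : y_n]$ never vanishes entirely), so whether these $n+1$ coordinates of an image point are all zero detects membership in $V(x_0)$ versus $D(x_0)$. The same computation shows that $\varphi$ has no base points, hence is a genuine morphism, and once injectivity is established the bound $\injDim{\P^1 \times \P^n, \O(d,1)} \leq 2(n+1)$ follows directly from the definition.

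On $V(x_0)$ (set $x_0 = 0$, $x_1 = 1$) the morphism collapses to the linear reembedding $[y_0 : \ldots : y_n] \mapsto [0 : \ldots : 0 : y_n : 0 : y_0 : y_1 : \ldots : y_{n-1}]$, which is manifestly injective. The substance of the argument lies in the restriction to $D(x_0)$, which I would parametrize by $t := x_1/x_0 \in \A^1$ and $[y] = [y_0 : \ldots : y_n] \in \P^n$ (setting $x_0 = 1$, $x_1 = t$). Unlike the weighted projective space setting, $D(x_0) \cong \A^1 \times \P^n$ is \emph{not} affine, so I cannot simply read off a triangular closed embedding from $\A^n$; instead I argue projectively in the $y$-variables. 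The first $n+1$ coordinates recover $[y]$, and since every coordinate function of $\varphi$ is linear in the $y_i$, I may normalize two preimages $(t, y)$ and $(t', y')$ with the same image so that $y' = y$ and the projective scale factor equals $1$.

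With this normalization the coincidence of images reduces to the relations $\Delta s \cdot y_n = 0$, $\Delta u \cdot y_0 = 0$ and $\Delta s \cdot y_j + \Delta u \cdot y_{j+1} = 0$ for $j = 0, \ldots, n-1$, where $\Delta s := t^d - t'^d$ and $\Delta u := d(t^{d-1} - t'^{d-1})$. The conclusion $t = t'$ then comes from a short case distinction, which I expect to be the main (though modest) obstacle. If $\Delta u \neq 0$, then $y_0 = 0$, and the relations $\Delta s \cdot y_j + \Delta u \cdot y_{j+1} = 0$ propagate this in a cascade to force $y_1 = \ldots = y_n = 0$, contradicting $y \neq 0$. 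If $\Delta u = 0$, all the relations collapse to $\Delta s \cdot y_j = 0$ for every $j$, whence $\Delta s = 0$; then $t^d = t'^d$ together with $t^{d-1} = t'^{d-1}$ forces $t = t'$ (dividing when $t, t' \neq 0$, checking the zero case directly for $d \geq 2$, while for $d = 1$ the equality $t^d = t'^d$ already reads $t = t'$). This establishes injectivity on $D(x_0)$ and completes the proof.
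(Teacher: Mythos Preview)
Your argument is correct and shares the same high-level decomposition as the paper: split $\P^1 \times \P^n$ into $V(x_0)$ and $D(x_0)$, observe that the first $n+1$ coordinates detect which piece you are on, and treat each piece separately. On $V(x_0)$ both arguments are identical. On $D(x_0)$, however, the two proofs diverge. The paper interprets the coordinates as the $T^{d+1}$- and $TS^d$-coefficients of the polynomials $f_i = (x_0T+x_1S)^d(y_iT+y_{i+1}S)$, reads off from the zero pattern of the image the minimal index $k$ and maximal index $m$ with $y_k, y_m \neq 0$, and then recovers $x_1$ explicitly via the ratio
\[
  x_1 = d \cdot \frac{x_0^d y_k}{x_0^d y_m} \cdot \frac{x_1^d y_m + d x_0 x_1^{d-1} y_{m+1}}{x_1^d y_{k-1} + d x_0 x_1^{d-1} y_k}.
\]
Your approach instead normalizes so that $y' = y$ and the scale factor is $1$, reducing to the linear system $\Delta s \, y_n = 0$, $\Delta u \, y_0 = 0$, $\Delta s \, y_j + \Delta u \, y_{j+1} = 0$, and then runs a clean cascade in the case $\Delta u \neq 0$ and a $\gcd$-of-exponents argument when $\Delta u = 0$. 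The paper's route has the advantage of producing an explicit inverse (you see the preimage, not just its uniqueness), while yours is more streamlined and avoids the somewhat ad~hoc choice of indices $k$ and $m$.
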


\begin{proof}
  The morphism $\varphi \colon \P^1 \times \P^n \injTo \P^{2n+2}$ written out above maps $[x_0:x_1] \times [y_0:\ldots:y_n]$ to the point in $\P^{2n+2}$ whose coordinates are the coefficients of $T^{d+1}$ and $TS^d$ in the expressions $\{f_i := (x_0 T+x_1 S)^d (y_i T + y_{i+1}S) \mid i=-1,0,1,\ldots,n\}$, where $y_{-1} := y_{n+1} := 0$. Restricting $\varphi$ to $[0:1] \times \P^n$, we obtain the closed embedding
    \[[0:1] \times \P^n \hookrightarrow \P^{2n+2}, \quad [0:1] \times [y_0:\ldots:y_n] \mapsto [0:\ldots:0:y_n:0:y_0:y_1:\ldots:y_{n-1}].\]
  Note that the points in the image have the first $n+1$ coordinates zero, hence the image of $[0:1] \times \P^n$ does not intersect $\varphi(D(x_0) \times \P^n)$. In particular, it is sufficient to show injectivity of the restriction of $\varphi$ to $D(x_0) \times \P^n$, for which we may simply set $x_0=1$. The first of the expressions $f_i = (x_0T+x_1S)^d (y_i T + y_{i+1}S)$ with a non-zero coefficient of $T^{d+1}$ uniquely determines the minimal $k$ such that $y_k \neq 0$. Similarly, the last expression in which $TS^d$ appears with non-zero coefficient determines the maximal $m$ with $y_m \neq 0$. Hence, for a point in the image of
    \[\restr{\varphi}{x_0=1} \colon \A^1 \times \P^n \to \P^{2n+2}, \ [1:x_1] \times [y_0: \ldots : y_n] \mapsto [y_0:\ldots : y_n: x_1^d y_n : d x_1^{d-1} y_0:\ldots ],\]
  the values $k$ and $m$ can be read off the zero-pattern of its coordinates. Note also that $y_0, \ldots, y_n$ are determined by the first $n+1$ coordinates. Finally, $x_1$ can be reconstructed from the coordinates as
    \[x_1 = d \: \frac{y_k}{y_m} \: \frac{x_1^d y_m}{dx_1^{d-1} y_k} = d \: \frac{x_0^d y_k}{x_0^d y_m} \: \frac{x_1^d y_m + dx_0 x_1^{d-1} y_{m+1}}{x_1^d y_{k-1} + dx_0 x_1^{d-1} y_{k}}.\]
  This shows that any point in the image of $\restr{\varphi}{x_0=1}$ determines a unique preimage, proving injectivity.
\end{proof}

Note that for $n = 1$, \cref{thm:injectP1Pn} gives another injection of $\P^1 \times \P^1 \injTo \P^4$ via global sections of $\O(d,1)$, structurally different from the one constructed before in \cref{prop:injectP1P1}.

\subsection{Graph-theoretic constructions} \label{ssec:GraphConstr}

In this section, we give a combinatorial construction of an injection $\P^1 \times \P^1 \times \P^m \hookrightarrow \P^{2m+4}$ by multilinear forms, showing that $\injDim{\P^1 \times \P^1 \times \P^m,\O(1,1,1)} \leq 2m+4$.

The complete linear system $|\O(1,1,1)|$ embeds $\P^1 \times \P^1 \times \P^m$ by the Segre embedding 
  \[\P(\C^2) \times \P(\C^2) \times \P(\C^{m+1}) \hookrightarrow \P(\C^2 \otimes \C^2 \otimes \C^{m+1}), \quad [u] \times [v] \times [w] \mapsto [u \otimes v \otimes w].\]
We denote its image by $Y \subseteq \P(\C^2 \otimes \C^2 \otimes \C^{m+1})$. For any $k \geq 1$, we denote the standard basis vectors of $\C^{k+1}$ by $e_0, e_1, \ldots, e_k$, and we write $e_0^*,\ldots,e_k^* \in (\C^{k+1})^*$ for the dual basis.

Contrary to the approach in \cref{ssec:TangConstr} and \cref{ssec:IndConstr}, here, we do not construct the injection by writing out explicit polynomials defining the morphism and proving injectivity by exploiting their structure. Instead, we explicitly describe a linear subspace $L \subseteq \P(\C^2 \otimes \C^2 \otimes \C^{m+1})$ of codimension $2m+3$ not intersecting the secant locus $\sigma_2^{\circ}(Y)$. Then, by \cref{prop:secantAvoidance}, the projection of $Y$ from $L$ gives an injection to $\P^{2m+4}$.

Necessarily, a codimension $2m+3$ linear space must intersect the $(2m+3)$-dimensional secant variety $\sigma_2(Y)$, and we need to ensure that this intersection does not meet the secant locus $\sigma_2^{\circ}(Y)$. In fact, we construct a linear subspace whose intersection with the secant variety is of much higher than expected dimension:

\begin{thm} \label{thm:graphConstr}
  Consider the Segre variety $Y \subseteq \P(\C^2 \otimes \C^2 \otimes \C^{m+1})$. There exists a linear subspace $L \subseteq \P(\C^2 \otimes \C^2 \otimes \C^{m+1})$ of codimension $2m+3$ not meeting $\sigma_2^{\circ}(Y)$ such that 
    \[L \cap \sigma_2(Y) = L_1 \sqcup L_2,\]
  where $L_1,L_2 \subseteq \sigma_2(Y) \setminus \sigma_2^{\circ}(Y)$ are disjoint linear spaces spanning $L$.
\end{thm}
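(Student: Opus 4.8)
The plan is to translate everything into linear algebra on the third flattening. Writing a tensor $T \in \C^2\otimes\C^2\otimes\C^{m+1}$ through its four ``coordinate vectors'' $a,b,c,d\in\C^{m+1}$ (the entries of the $2\times 2$ matrix obtained from the first two factors), one has $T\in\sigma_2(Y)$ if and only if the slices $M_0,\dots,M_m\in\C^2\otimes\C^2$ span a subspace $U_T$ of dimension $\le 2$, equivalently $\dim\langle a,b,c,d\rangle\le 2$; indeed a $2\times 2\times 2$ tensor always has border rank $\le 2$, so this flattening condition is not only necessary but sufficient. I would first record this together with the finer dichotomy inside $\sigma_2(Y)$: when $\dim U_T=2$, the line $\P(U_T)\subseteq\P(\C^2\otimes\C^2)=\P^3$ meets the Segre quadric $Q$ (the rank-$1$ locus) either in two points, in which case $T\in\sigma_2^{\circ}(Y)$ has rank $\le 2$, or in a single double point, in which case $T$ lies in the boundary $\sigma_2(Y)\setminus\sigma_2^{\circ}(Y)$ and has rank exactly $3$. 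This reduces the whole problem to producing, inside the determinantal cone $\{\dim\langle a,b,c,d\rangle\le 2\}$, two linear spaces whose slice-lines are \emph{tangent} to $Q$.

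The construction I would use is as follows. Fix two tangent lines $U,U'$ to $Q$ with $U\oplus U'=\C^2\otimes\C^2$ (two disjoint tangent lines in $\P^3$; an explicit complementary pair is easy to write down). Fix subspaces $\mathcal W_1\subseteq\C^{2\times(m+1)}$ of dimension $m$ and $\mathcal W_2$ of dimension $m-1$, each consisting---away from $0$---entirely of rank-$2$ matrices. Let $W_1=C\,\mathcal W_1$ and $W_2=C'\,\mathcal W_2$, where $C,C'$ are $4\times 2$ matrices with column spans $U,U'$, so that $W_i$ is the space of tensors whose slice matrix lies in $U$ (resp.\ $U'$). Put $W:=W_1\oplus W_2$, $L:=\P(W)$ and $L_i:=\P(W_i)$. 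Since $U\cap U'=0$, a nonzero tensor cannot have its slices lie in both $U$ and $U'$, so $W_1\cap W_2=0$ automatically; thus $\dim W=2m-1$, the $L_i$ are disjoint and span $L$, and $L$ has exactly the codimension making the projection from it land in $\P^{2m+4}$.

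The verification then splits into three points. Each nonzero element of $W_i$ has slice-span equal to the full tangent line $U$ (resp.\ $U'$), because $\mathcal W_i$ consists of rank-$2$ matrices; hence by the dichotomy above it is a boundary point of rank $3$, giving $L_i\subseteq\sigma_2(Y)\setminus\sigma_2^{\circ}(Y)$. For a mixed element $T=T_1+T_2$ with $T_i\in W_i\setminus\{0\}$, writing $T_1=Cw$, $T_2=C'w'$ and using $U\oplus U'=\C^2\otimes\C^2$, the dimension of the slice-span of $T$ equals $\dim\big(R(w)+R(w')\big)$, where $R(w),R(w')\subseteq\C^{m+1}$ are the (two-dimensional) row spaces of $w,w'$; this is $\ge 3$ as soon as $R(w)\ne R(w')$, forcing $T\notin\sigma_2(Y)$. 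Granting that the two families of $2$-planes $\{R(w):w\in\mathcal W_1\}$ and $\{R(w'):w'\in\mathcal W_2\}$ are disjoint in $\Gr(2,m+1)$, I obtain $W\cap\{\text{border rank}\le 2\}=W_1\cup W_2$ exactly, which projectivizes via \cref{lem:secantAvoidance} and \cref{prop:secantAvoidance} to $L\cap\sigma_2(Y)=L_1\sqcup L_2$ and in particular $L\cap\sigma_2^{\circ}(Y)=\emptyset$.

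The crux---and the combinatorial heart of the argument---is the granted statement: choosing $\mathcal W_1,\mathcal W_2$ that are simultaneously of constant rank $2$ and have disjoint families of row spaces. For mere existence this is a dimension count: the rank-$\le 1$ locus in $\C^{2\times(m+1)}$ has codimension $m$, so generic $\mathcal W_1,\mathcal W_2$ of dimensions $m,m-1$ avoid it, and their row-space families have dimensions at most $m-1$ and $m-2$, summing to $2m-3<2m-2=\dim\Gr(2,m+1)$, whence generic choices are disjoint; the same count shows $\dim W=2m-1$ is the largest one can hope for, in agreement with the lower bound of \cref{cor:boundProductProjectiveSpacesGeneral}. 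To obtain the \emph{explicit} injection promised in this section I would instead take $\mathcal W_1,\mathcal W_2$ to be shift (Hankel-type) spaces encoded by a path on the vertex set $\{0,\dots,m\}$, where constant rank $2$ is immediate from the banded support and the disjointness of row spaces becomes a transparent combinatorial check. I expect this explicit bookkeeping---guaranteeing constant rank together with disjoint row spaces---to be the only genuinely delicate step, the geometric reductions being routine once the slice/flattening dictionary is in place.
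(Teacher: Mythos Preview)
Your framework is correct and is, in fact, precisely the geometric skeleton underlying the paper's construction. The paper also chooses two complementary tangent lines $U,U'\subseteq\C^2\otimes\C^2$ to the Segre quadric (after a change of basis $\psi$, these become the coordinate $2$-planes $\langle e_0,e_1\rangle$ and $\langle e_2,e_3\rangle$), and its spaces $W_1,W_2$ are exactly of the form $C\mathcal W_1$ and $C'\mathcal W_2$ for shift spaces $\mathcal W_1,\mathcal W_2$ encoded by the two directed paths $E_1$ and $E_2$ on $\{0,\dots,m\}$. Your flattening criterion and the tangent/secant dichotomy are the same reductions the paper uses (its Lemma on $\dim Z_w\le2$), and your observation that the slice-span of a mixed tensor has dimension $\dim(R(w)+R(w'))$ is equivalent to the paper's identification $Z_w=\psi(\text{slice-span})$.

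Where you diverge from the paper is in the verification. Your genericity argument for existence is sound: the incidence count you sketch shows that the bad locus in $\Gr(m,2m+2)\times\Gr(m-1,2m+2)$ has positive codimension, so generic $(\mathcal W_1,\mathcal W_2)$ satisfy both constant rank~$2$ and disjoint row-space families. This alone proves the theorem as stated, and is arguably cleaner than the paper's route. The paper, however, insists on an explicit $L$ (in order to write down the morphism of \cref{cor:P1P1Pm}), and it takes $\mathcal W_1$ to be the shift space along the linear path $0\to1\to\dots\to m$ and $\mathcal W_2$ along the zigzag path $0\to m\to 1\to m-1\to\dots$.

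The one point where you are too optimistic is calling the disjointness of row spaces for this explicit choice a ``transparent combinatorial check''. It is not: the paper needs a genuine induction on $m$, with separate arguments for $m$ even and $m$ odd, tracking how the zigzag path interacts with the linear one near the central vertex $\lceil m/2\rceil$. What makes this delicate is that one must rule out \emph{every} coincidence $R(w)=R(w')$, and the two paths share all vertices, so naive support arguments do not suffice. The paper's trick is to rephrase the condition as ``$Z_w$ lies in a coordinate hyperplane of $\C^4$'', which allows a vertex-by-vertex analysis exploiting that each vertex has at most one incoming and one outgoing edge from each path. If you want the explicit version, expect to reproduce that induction; if you are content with existence, your dimension count already does the job.
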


We recall that $Y$ consists of the points corresponding to rank~$1$ tensors in $\C^2 \otimes \C^2 \otimes \C^{m+1}$ and $\sigma_2^{\circ}(Y)$ is the set of rank~$\leq 2$ tensors. The closure of the latter is the secant variety $\sigma_2(Y)$ parameterizing tensors of \emph{border rank} at most~$2$. Then \cref{thm:graphConstr} states the existence of two disjoint large-dimensional linear subspaces consisting of border rank~$2$ tensors only, whose common span does not contain any rank~$2$ tensor.

In order to constructively prove \cref{thm:graphConstr}, we first introduce combinatorial objects encoding in a useful way the tensor subspaces which we will consider.

Throughout, we fix $m\in \Z_{>0}$. Let $\Gamma$ be the directed graph with vertex set $\{0,1,\ldots,m\}$ and edges $E := E_1 \sqcup E_2$, where
\begin{align*}
  E_1 &:= \{(0,1),(1,2),(2,3),\ldots,(m-1,m)\} \qquad \text{and} \\
  E_2 &:= \{(i,m-i) \mid 0 \leq i < \lfloor m/2 \rfloor\} \cup \{(m-i,i+1) \mid 0 \leq i < \lfloor (m-1)/2 \rfloor\} \\
&\phantom{:}= \{(0,m),(m,1),(1,m-1),(m-1,2), \ldots\}.
\end{align*}
See \cref{fig:Graph} for an illustration. Note that $E_1$ and $E_2$ each form a directed path in $\Gamma$.
\begin{figure}
\definecolor{qqzzqq}{rgb}{0.,0.6,0.}
\definecolor{ffqqqq}{rgb}{1.,0.,0.}
\definecolor{ffffff}{rgb}{1.,1.,1.}
\begin{tikzpicture}[line cap=round,line join=round,>=triangle 45,x=1.0cm,y=1.0cm,scale=0.45]
\clip(-4.,-1.) rectangle (7.,9.);
\draw [line width=1.pt,color=ffffff] (0.,0.)-- (3.096192290249435,0.);
\draw [line width=1.pt,color=ffffff] (3.096192290249435,0.)-- (5.026636607813593,2.4207006082917997);
\draw [line width=1.pt,color=ffffff] (5.026636607813593,2.4207006082917997)-- (4.337669007678949,5.439264893538139);
\draw [line width=1.pt,color=ffffff] (4.337669007678949,5.439264893538139)-- (1.548096145124718,6.7826523814585205);
\draw [line width=1.pt,color=ffffff] (1.548096145124718,6.7826523814585205)-- (-1.2414767174295134,5.439264893538141);
\draw [line width=1.pt,color=ffffff] (-1.2414767174295134,5.439264893538141)-- (-1.9304443175641588,2.420700608291801);
\draw [line width=1.pt,color=ffffff] (-1.9304443175641588,2.420700608291801)-- (0.,0.);
\draw [line width=1.pt,color=ffqqqq] (0.,0.)-- (-1.9304443175641588,2.420700608291801);
\draw [line width=1.pt,color=ffqqqq] (-1.0894196307414492,1.3660890080209276) -- (-0.7783357141320475,1.359387270497145);
\draw [line width=1.pt,color=ffqqqq] (-1.0894196307414492,1.3660890080209276) -- (-1.1521086034321104,1.0613133377946569);
\draw [line width=1.pt,color=ffqqqq] (-1.9304443175641588,2.420700608291801)-- (-1.2414767174295134,5.439264893538141);
\draw [line width=1.pt,color=ffqqqq] (-1.5416349529518785,4.124185738156523) -- (-1.352916932806974,3.876792073461022);
\draw [line width=1.pt,color=ffqqqq] (-1.5416349529518785,4.124185738156523) -- (-1.819004102186699,3.983173428368921);
\draw [line width=1.pt,color=ffqqqq] (-1.2414767174295134,5.439264893538141)-- (1.548096145124718,6.7826523814585205);
\draw [line width=1.pt,color=ffqqqq] (0.33278026071779765,6.1973870976945244) -- (0.25702386608303446,5.895593981254097);
\draw [line width=1.pt,color=ffqqqq] (0.33278026071779765,6.1973870976945244) -- (0.04959556161217151,6.326323293742565);
\draw [line width=1.pt,color=ffqqqq] (1.548096145124718,6.7826523814585205)-- (4.337669007678949,5.439264893538139);
\draw [line width=1.pt,color=ffqqqq] (3.12235312327203,6.024530177302137) -- (2.8391684241664037,5.8955939812540965);
\draw [line width=1.pt,color=ffqqqq] (3.12235312327203,6.024530177302137) -- (3.0465967286372666,6.326323293742564);
\draw [line width=1.pt,color=ffqqqq] (4.337669007678949,5.439264893538139)-- (5.026636607813593,2.4207006082917997);
\draw [line width=1.pt,color=ffqqqq] (4.726478372291228,3.7357797636734174) -- (4.449109223056409,3.8767920734610204);
\draw [line width=1.pt,color=ffqqqq] (4.726478372291228,3.7357797636734174) -- (4.915196392436133,3.9831734283689184);
\draw [line width=1.pt,color=ffqqqq] (5.026636607813593,2.4207006082917997)-- (3.096192290249435,0.);
\draw [line width=1.pt,color=ffqqqq] (3.937216977072143,1.0546116002708743) -- (3.874528004381481,1.359387270497145);
\draw [line width=1.pt,color=ffqqqq] (3.937216977072143,1.0546116002708743) -- (4.248300893681544,1.0613133377946569);
\draw [line width=1.pt,color=qqzzqq] (0.,0.)-- (3.096192290249435,0.);
\draw [line width=1.pt,color=qqzzqq] (1.7472934240362819,0.) -- (1.548096145124717,-0.23903673469387746);
\draw [line width=1.pt,color=qqzzqq] (1.7472934240362819,0.) -- (1.548096145124717,0.23903673469387746);
\draw [line width=1.pt,color=qqzzqq] (3.096192290249435,0.)-- (-1.9304443175641588,2.420700608291801);
\draw [line width=1.pt,color=qqzzqq] (0.4034034394724439,1.2967787643420936) -- (0.68658813857807,1.4257149603901347);
\draw [line width=1.pt,color=qqzzqq] (0.4034034394724439,1.2967787643420936) -- (0.4791598341072071,0.9949856479016672);
\draw [line width=1.pt,color=qqzzqq] (-1.9304443175641588,2.420700608291801)-- (5.026636607813593,2.4207006082917997);
\draw [line width=1.pt,color=qqzzqq] (1.7472934240362819,2.420700608291802) -- (1.548096145124717,2.1816638735979246);
\draw [line width=1.pt,color=qqzzqq] (1.7472934240362819,2.420700608291802) -- (1.548096145124717,2.659737342985679);
\draw [line width=1.pt,color=qqzzqq] (5.026636607813593,2.4207006082917997)-- (-1.2414767174295134,5.439264893538141);
\draw [line width=1.pt,color=qqzzqq] (1.7131093983218442,4.016411211111164) -- (1.9962940974274703,4.145347407159205);
\draw [line width=1.pt,color=qqzzqq] (1.7131093983218442,4.016411211111164) -- (1.7888657929566074,3.7146180946707372);
\draw [line width=1.pt,color=qqzzqq] (-1.2414767174295134,5.439264893538141)-- (4.337669007678949,5.439264893538139);
\draw [line width=1.pt,color=qqzzqq] (1.7472934240362834,5.43926489353814) -- (1.5480961451247186,5.200228158844263);
\draw [line width=1.pt,color=qqzzqq] (1.7472934240362834,5.43926489353814) -- (1.5480961451247186,5.678301628232018);
\begin{scriptsize}
\draw [fill=black] (0.,0.) circle (2.0pt);
\draw[color=black] (-0.22376235827664415,-0.535619954648519) node {$0$};
\draw [fill=black] (3.096192290249435,0.) circle (2.5pt);
\draw[color=black] (3.468027210884356,-0.5621795918367276) node {$6$};
\draw [fill=black] (5.026636607813593,2.4207006082917997) circle (2.0pt);
\draw[color=black] (5.353761451247169,2.1734630385487588) node {$5$};
\draw [fill=black] (4.337669007678949,5.439264893538139) circle (2.0pt);
\draw[color=black] (4.689770521541953,5.387179138322001) node {$4$};
\draw [fill=black] (1.548096145124718,6.7826523814585205) circle (2.0pt);
\draw[color=black] (1.582292970521543,7.086995918367351) node {$3$};
\draw [fill=black] (-1.2414767174295134,5.439264893538141) circle (2.0pt);
\draw[color=black] (-1.3658267573696152,5.652775510204086) node {$2$};
\draw [fill=black] (-1.9304443175641588,2.420700608291801) circle (2.0pt);
\draw[color=black] (-2.268854421768709,2.359380498866219) node {$1$};
\end{scriptsize}
\end{tikzpicture}   %
\definecolor{qqzzqq}{rgb}{0.,0.6,0.}
\definecolor{ffqqqq}{rgb}{1.,0.,0.}
\definecolor{ffffff}{rgb}{1.,1.,1.}
\begin{tikzpicture}[line cap=round,line join=round,>=triangle 45,x=1cm,y=1cm,scale=0.4]
\clip(-4.,-1.) rectangle (7.,9.);

\draw [line width=1.pt,color=ffqqqq] (0.,0.)-- (-2.189338564292883,2.1893385642928838);
\draw [line width=1.pt,color=ffqqqq] (-1.2355230288587165,1.2355230288587167) -- (-0.9256447860917102,1.263693778201172);
\draw [line width=1.pt,color=ffqqqq] (-1.2355230288587165,1.2355230288587167) -- (-1.2636937782011717,0.9256447860917114);
\draw [line width=1.pt,color=ffqqqq] (-2.189338564292883,2.1893385642928838)-- (-2.189338564292882,5.285530854542318);
\draw [line width=1.pt,color=ffqqqq] (-2.189338564292882,3.936631988329166) -- (-1.9503018295990042,3.737434709417601);
\draw [line width=1.pt,color=ffqqqq] (-2.189338564292882,3.936631988329166) -- (-2.4283752989867597,3.737434709417601);
\draw [line width=1.pt,color=ffqqqq] (-2.189338564292882,5.285530854542318)-- (0.,7.474869418835201);
\draw [line width=1.pt,color=ffqqqq] (-0.9538155354341654,6.521053883401034) -- (-0.9256447860917102,6.211175640634029);
\draw [line width=1.pt,color=ffqqqq] (-0.9538155354341654,6.521053883401034) -- (-1.2636937782011717,6.549224632743489);
\draw [line width=1.pt,color=ffqqqq] (0.,7.474869418835201)-- (3.0961922902494354,7.4748694188352);
\draw [line width=1.pt,color=ffqqqq] (1.7472934240362834,7.474869418835201) -- (1.5480961451247186,7.235832684141323);
\draw [line width=1.pt,color=ffqqqq] (1.7472934240362834,7.474869418835201) -- (1.5480961451247186,7.713906153529078);
\draw [line width=1.pt,color=ffqqqq] (3.0961922902494354,7.4748694188352)-- (5.285530854542318,5.285530854542317);
\draw [line width=1.pt,color=ffqqqq] (4.331715319108151,6.239346389976483) -- (4.0218370763411455,6.211175640634029);
\draw [line width=1.pt,color=ffqqqq] (4.331715319108151,6.239346389976483) -- (4.3598860684506064,6.549224632743489);
\draw [line width=1.pt,color=ffqqqq] (5.285530854542318,5.285530854542317)-- (5.285530854542317,2.1893385642928824);
\draw [line width=1.pt,color=ffqqqq] (5.285530854542317,3.538237430506036) -- (5.04649411984844,3.7374347094176006);
\draw [line width=1.pt,color=ffqqqq] (5.285530854542317,3.538237430506036) -- (5.524567589236195,3.7374347094176006);
\draw [line width=1.pt,color=ffqqqq] (5.285530854542317,2.1893385642928824)-- (3.096192290249435,0.);
\draw [line width=1.pt,color=ffqqqq] (4.050007825683601,0.9538155354341668) -- (4.0218370763411455,1.263693778201172);
\draw [line width=1.pt,color=ffqqqq] (4.050007825683601,0.9538155354341668) -- (4.3598860684506064,0.9256447860917114);
\draw [line width=1.pt,color=qqzzqq] (0.,0.)-- (3.096192290249435,0.);
\draw [line width=1.pt,color=qqzzqq] (1.7472934240362819,0.) -- (1.548096145124717,-0.23903673469387746);
\draw [line width=1.pt,color=qqzzqq] (1.7472934240362819,0.) -- (1.548096145124717,0.23903673469387746);
\draw [line width=1.pt,color=qqzzqq] (3.096192290249435,0.)-- (-2.189338564292883,2.1893385642928838);
\draw [line width=1.pt,color=qqzzqq] (0.2693925740599394,1.170898780558106) -- (0.5449022610722728,1.315510428848446);
\draw [line width=1.pt,color=qqzzqq] (0.2693925740599394,1.170898780558106) -- (0.3619514648842795,0.8738281354444388);
\draw [line width=1.pt,color=qqzzqq] (-2.189338564292883,2.1893385642928838)-- (5.285530854542317,2.1893385642928824);
\draw [line width=1.pt,color=qqzzqq] (1.7472934240362819,2.1893385642928833) -- (1.548096145124717,1.950301829599006);
\draw [line width=1.pt,color=qqzzqq] (1.7472934240362819,2.1893385642928833) -- (1.548096145124717,2.428375298986761);
\draw [line width=1.pt,color=qqzzqq] (5.285530854542317,2.1893385642928824)-- (-2.189338564292882,5.285530854542318);
\draw [line width=1.pt,color=qqzzqq] (1.3640618562063815,3.8136642078292646) -- (1.639571543218715,3.958275856119605);
\draw [line width=1.pt,color=qqzzqq] (1.3640618562063815,3.8136642078292646) -- (1.4566207470307224,3.5165935627155966);
\draw [line width=1.pt,color=qqzzqq] (-2.189338564292882,5.285530854542318)-- (5.285530854542318,5.285530854542317);
\draw [line width=1.pt,color=qqzzqq] (1.7472934240362834,5.285530854542317) -- (1.5480961451247186,5.04649411984844);
\draw [line width=1.pt,color=qqzzqq] (1.7472934240362834,5.285530854542317) -- (1.5480961451247186,5.524567589236195);
\draw [line width=1.pt,color=qqzzqq] (5.285530854542318,5.285530854542317)-- (0.,7.474869418835201);
\draw [line width=1.pt,color=qqzzqq] (2.4587311383528228,6.456429635100423) -- (2.7342408253651564,6.601041283390763);
\draw [line width=1.pt,color=qqzzqq] (2.4587311383528228,6.456429635100423) -- (2.5512900291771636,6.159358989986755);
\begin{scriptsize}
\draw [fill=black] (0.,0.) circle (2.0pt);
\draw[color=black] (-0.06440453514739236,-0.6949777777777706) node {$0$};
\draw [fill=black] (3.096192290249435,0.) circle (2.5pt);
\draw[color=black] (3.4149079365079387,-0.7215374149659792) node {$7$};
\draw [fill=black] (5.285530854542317,2.1893385642928824) circle (2.0pt);
\draw[color=black] (5.805275283446716,1.8813070294784642) node {$6$};
\draw [fill=black] (5.285530854542318,5.285530854542317) circle (2.0pt);
\draw[color=black] (5.645917460317464,5.387179138322001) node {$5$};
\draw [fill=black] (3.0961922902494354,7.4748694188352) circle (2.0pt);
\draw[color=black] (3.468027210884356,7.67130793650794) node {$4$};
\draw [fill=black] (0.,7.474869418835201) circle (2.0pt);
\draw[color=black] (-0.14408344671201823,7.830665759637192) node {$3$};
\draw [fill=black] (-2.189338564292882,5.285530854542318) circle (2.0pt);
\draw[color=black] (-2.534450793650795,5.334059863945583) node {$2$};
\draw [fill=black] (-2.189338564292883,2.1893385642928838) circle (2.0pt);
\draw[color=black] (-2.5610104308390036,1.8281877551020471) node {$1$};
\end{scriptsize}
\end{tikzpicture}   \caption{The graph $\Gamma_m$ for $m=6$ and $m=7$. The edges in $E_1$ are marked in red, the edges of $E_2$ in green.}
  \label{fig:Graph}
\end{figure}
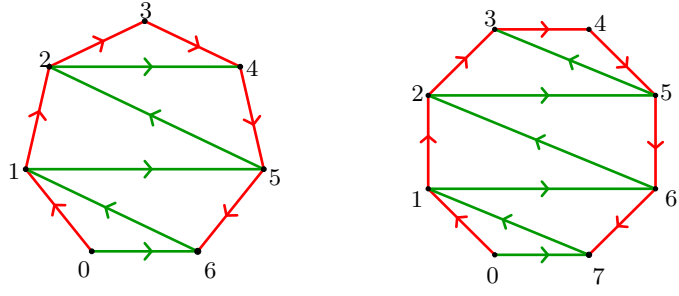

Consider the vector space $\C^E = \{w \colon E \to \C\}$ of complex weight functions on the edges of $\Gamma$. For each vertex $k \in \{0,1,\ldots,m\}$, consider the linear map
\begin{align*}
  \Psi_k \colon \C^E &\to \C^4, \\
  w &\mapsto \big(\sum_{(i,k) \in E_1} w(i,k), \sum_{(k,i) \in E_1} w(k,i), \sum_{(i,k) \in E_2} w(i,k), \sum_{(k,i) \in E_2} w(k,i)\big),
\end{align*}
extracting from a weight function the total weights of incoming and outgoing edges at vertex $k$ from $E_1$ and $E_2$, respectively. For every $w \in \C^E$, let $Z_w \subseteq \C^4$ denote the vector space spanned by $\Psi_0(w),\ldots,\Psi_m(w)$.

We now define the linear space $L$ which we will check to satisfy the properties of \cref{thm:graphConstr}: Let $W_1 \subseteq \C^2 \otimes \C^2 \otimes \C^{m+1}$ be the $m$-dimensional vector space with the basis 
  \[u_{i,j} := e_0 \otimes e_0 \otimes e_j + e_0 \otimes e_1 \otimes e_i + e_1 \otimes e_0 \otimes e_i \qquad \text{for all } (i,j) \in E_1\]
Similarly, let $W_2 \subseteq \C^2 \otimes \C^2 \otimes \C^{m+1}$ be the $(m-1)$-dimensional vector space with the basis
  \[v_{i,j} := e_1 \otimes e_0 \otimes e_j + e_1 \otimes e_1 \otimes e_i + e_0 \otimes e_0 \otimes e_i \qquad \text{for all } (i,j) \in E_2\]
Define $W := W_1+W_2$, which is a vector space of dimension $2m-1$. Denote by $L_1,L_2,L \subseteq \P(\C^2 \otimes \C^2 \otimes \C^{m+1})$ the corresponding linear subspaces $L_i := \P W_i$ and $L := \P W$.

We may identify elements of $W$ with elements of $\C^E$ under the linear isomorphism
  \[\Phi \colon \C^E \xrightarrow{\cong} W, \qquad 
  w \mapsto \sum_{(i,j) \in E_1} w(i,j) \: u_{i,j} + \sum_{(i,j) \in E_2} w(i,j) \: v_{i,j}.\]
This allows for a combinatorial reformulation of the condition that a tensor in $W$ is of border rank $\leq 2$:

\begin{lemma} \label{lem:combinatorialRank2}
  A tensor $t \in W \subseteq \C^2 \otimes \C^2 \otimes \C^{m+1}$ is of border rank~$2$ if and only if $w := \Phi^{-1}(t) \in \C^E$ satisfies $\dim Z_w \leq 2$.
\end{lemma}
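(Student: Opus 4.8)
The plan is to reduce the statement to the (already noted) fact that the secant variety of the Segre threefold $\P^1\times\P^1\times\P^1$ fills its ambient $\P^7$, by showing that for $t=\Phi(w)\in W$ the number $\dim Z_w$ is exactly the rank of the flattening of $t$ along the last factor $\C^{m+1}$.

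First I would expand $t=\Phi(w)$ in the standard basis as $t=\sum_{a,b\in\{0,1\}} e_a\otimes e_b\otimes s_{ab}$ with slices $s_{ab}\in\C^{m+1}$, and read off the four slices directly from the definitions of $u_{i,j}$ and $v_{i,j}$. Writing $\Psi_k(w)=(A_k,B_k,C_k,D_k)$ for the four coordinates of $\Psi_k$ (the incoming/outgoing $E_1$- and $E_2$-weights at the vertex $k$), a short bookkeeping gives the $k$-th entries
\[
\big((s_{00})_k,\,(s_{01})_k,\,(s_{10})_k,\,(s_{11})_k\big)=\big(A_k+D_k,\ B_k,\ B_k+C_k,\ D_k\big),
\]
so that the $k$-th column of the $4\times(m+1)$ flattening matrix $M$ of $t$ (rows indexed by $\C^2\otimes\C^2$, columns by $\C^{m+1}$) equals $T\,\Psi_k(w)$, where
\[
T=\begin{pmatrix}1&0&0&1\\0&1&0&0\\0&1&1&0\\0&0&0&1\end{pmatrix}.
\]
Since $T$ is invertible, the column space of $M$ is $T(Z_w)$, whence $\operatorname{rank} M=\dim Z_w$. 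Thus the lemma is equivalent to the assertion that $t$ has border rank $\le 2$ if and only if $\operatorname{rank} M\le 2$.

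It then remains to prove this border-rank criterion for the $2\times 2\times(m+1)$ format. One implication is formal: every flattening rank of a tensor is bounded above by its border rank, so border rank $\le 2$ forces $\operatorname{rank} M\le 2$. For the converse, $\operatorname{rank} M\le 2$ means $t$ lies in $\C^2\otimes\C^2\otimes U$ for a subspace $U\subseteq\C^{m+1}$ with $\dim U\le 2$; choosing a linear embedding $U\hookrightarrow\C^2$, the tensor $t$ becomes a point of $\P(\C^2\otimes\C^2\otimes\C^2)=\P^7$, which is filled by $\sigma_2(\P^1\times\P^1\times\P^1)$ (as observed earlier in the $\O(1,1,1)$ example). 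Hence $t$ has border rank $\le 2$ as a $2\times2\times2$ tensor, and re-embedding the approximating rank-$\le2$ sequence along $U\hookrightarrow\C^{m+1}$ shows the same for $t$ in the original ambient space. Combining the two implications with $\operatorname{rank} M=\dim Z_w$ yields the claim.

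The conceptual crux, and the only nonformal input, is the equivalence ``border rank $\le 2$ $\Leftrightarrow$ flattening rank $\le 2$''; this is special to the $2\times 2$ format and ultimately rests on the non-defectivity $\sigma_2(\P^1\times\P^1\times\P^1)=\P^7$. The remaining work, namely the slice computation identifying the flattening's column space with $T(Z_w)$, is routine linear bookkeeping, but it must be carried out carefully to avoid index errors in matching incoming/outgoing edge weights to the correct $(a,b)$-slice.
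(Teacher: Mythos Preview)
Your argument is correct and is essentially the paper's proof written in matrix language: the paper also identifies the flattening $\varphi_t\colon(\C^{m+1})^*\to\C^2\otimes\C^2$, composes with an explicit isomorphism $\psi\colon\C^2\otimes\C^2\xrightarrow{\cong}\C^4$, and checks that $e_k^*\mapsto\Psi_k(w)$; your matrix $T$ is precisely $\psi^{-1}$. The only difference is that the paper invokes \cite[Theorem~5.1]{LM04} for the equivalence ``border rank $\le 2$ $\Leftrightarrow$ flattening rank $\le 2$'' in the $2\times2\times(m{+}1)$ format, whereas you supply a self-contained argument via $\sigma_2(\P^1\times\P^1\times\P^1)=\P^7$, which is a nice touch and avoids the external citation.
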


\begin{proof}
  By \cite[Theorem~5.1]{LM04}, a tensor $t \in W \subseteq \C^2 \otimes \C^2 \otimes \C^{m+1}$ is of border rank $\leq 2$ if and only if the induced linear map 
    \[\varphi_t \colon (\C^{m+1})^* \to \C^2 \otimes \C^2,
    \qquad \ell \mapsto (\id \otimes \id \otimes \ell)(t)\]
  has image $\im \varphi_t \subseteq \C^2 \otimes \C^2$ of dimension at most~$2$.
  Let $w := \Phi^{-1}(t) \in \C^E$. Composing $\varphi_t$ with the isomorphism
  \[\psi := (e_0^* \otimes e_0^* - e_1^* \otimes e_1^*, \ e_0^* \otimes e_1^*, \ e_1^* \otimes e_0^* - e_0^* \otimes e_1^*, \ e_1^* \otimes e_1^*) \colon \C^2 \otimes \C^2 \xrightarrow{\cong} \C^4\]
  gives the linear map 
  \[\psi \circ \varphi_t \colon (\C^{m+1})^* \to \C^4, \qquad e_k^* \mapsto \Psi_k(w),\]
  whose image is precisely $Z_w$.
\end{proof}

Based on \cref{lem:combinatorialRank2}, our proof of \cref{thm:graphConstr} will become very combinatorial. The main graph-theoretical observations are the content of the next two Lemmas:

\begin{lemma}
  Let $w \in \C^E$. If $Z_w \subseteq \C^4$ lies in one of the coordinate hyperplanes, then $\Phi(w) \in W_1$ or $\Phi(w) \in W_2$.
\end{lemma}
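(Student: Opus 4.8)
The plan is to convert the hypothesis that $Z_w$ lies in a coordinate hyperplane into the vanishing of a single coordinate of all the spanning vectors $\Psi_k(w)$, and then to read off from the path structure of $E_1$ and $E_2$ that such vanishing forces $w$ to be supported on exactly one of the two edge sets.

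Since $Z_w = \opspan\{\Psi_0(w), \ldots, \Psi_m(w)\}$, the inclusion of $Z_w$ into the $c$-th coordinate hyperplane $\{x_c = 0\}$ of $\C^4$ holds if and only if the $c$-th coordinate of $\Psi_k(w)$ vanishes for every vertex $k \in \{0, 1, \ldots, m\}$. By the definition of $\Psi_k$, these four coordinates are, in order, the total $E_1$-weight entering $k$, the total $E_1$-weight leaving $k$, the total $E_2$-weight entering $k$, and the total $E_2$-weight leaving $k$. So it suffices to treat the four cases $c \in \{1,2,3,4\}$ separately.

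The crucial input is that $E_1$ and $E_2$ are each directed paths, so within either edge set every vertex is the head of at most one edge and the tail of at most one edge. Hence each of the sums defining $\Psi_k(w)$ is either empty or equal to $w$ evaluated on a single edge. For $E_1$ the incoming edge at a vertex $k \geq 1$ is $(k-1,k)$ and the outgoing edge at a vertex $k \leq m-1$ is $(k,k+1)$; thus vanishing of the first coordinate for all $k$ (case $c=1$), or of the second coordinate for all $k$ (case $c=2$), yields $w(i,j) = 0$ for all $(i,j) \in E_1$. In that case $\Phi(w) = \sum_{(i,j) \in E_2} w(i,j)\, v_{i,j} \in W_2$. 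Symmetrically, every edge of $E_2$ is the only $E_2$-edge entering its head vertex and the only $E_2$-edge leaving its tail vertex, so vanishing of the third coordinate (case $c=3$) or of the fourth coordinate (case $c=4$) for all $k$ forces $w$ to vanish on all of $E_2$, giving $\Phi(w) \in W_1$.

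Once the path structure is invoked the argument is immediate, so I do not expect a genuine obstacle here. The only point needing a moment's care is the bookkeeping at degenerate vertices — vertex $0$ has no incoming $E_1$-edge, vertex $m$ has no outgoing $E_1$-edge, and the two endpoints of the $E_2$-path together with the single vertex it omits contribute empty sums — but these merely make the relevant coordinate automatically zero and leave intact the conclusion that each non-empty sum reduces to one edge weight.
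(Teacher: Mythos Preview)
Your proof is correct and follows essentially the same approach as the paper: both arguments use that $E_1$ and $E_2$ are directed paths (so each vertex has at most one incoming and one outgoing edge from each set), which reduces each coordinate of $\Psi_k(w)$ to a single edge weight and forces $w$ to vanish on the appropriate edge set. The paper handles one coordinate hyperplane explicitly and dismisses the remaining three by symmetry, whereas you spell out all four cases, but the content is identical.
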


\begin{proof}
  If $Z_w \subseteq V(e_0^*)$, then the first coordinate of $\Psi_k(w)$ is zero for all $k \in \{0,1,\ldots,m\}$. Since every vertex of $\Gamma$ has at most one incoming edge from $E_1$ and since every edge from $E_1$ is an incoming edge at some vertex, this shows that $w(i,j) = 0$ for all $(i,j) \in E_1$. Hence, $\Phi(w) \in W_2$. The other three cases $Z_w \subseteq V(e_i^*)$ for $i \in \{1,2,3\}$ follow by the same argument.
\end{proof}

\begin{lemma}
 Let $w \in \C^E$. If $\dim Z_w \leq 2$, then $Z_w \subseteq \C^4$ lies in one of the coordinate hyperplanes.
\end{lemma}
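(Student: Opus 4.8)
The plan is to read off $\dim Z_w$ as the rank of the $4\times(m+1)$ matrix $M$ whose $k$-th column is $\Psi_k(w)$, so that its four rows are the coordinate-sequences $a,b,c,d\in\C^{m+1}$ recording at each vertex the incoming and outgoing $E_1$-weights and the incoming and outgoing $E_2$-weights. Then $Z_w\subseteq V(e_i^*)$ precisely when the $i$-th row vanishes, so it suffices to prove the contrapositive: if all four rows are nonzero, then $\operatorname{rank}M\ge 3$. The rows $a,b$ depend only on $w|_{E_1}$ and $c,d$ only on $w|_{E_2}$, and $a$ (resp.\ $c$) is nonzero iff $w|_{E_1}\ne 0$ (resp.\ $w|_{E_2}\ne 0$). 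First I would note that $\{a,b\}$ is linearly independent whenever $w|_{E_1}\ne 0$: since $b$ is the one-step shift of $a$ along the monotone path $0,1,\dots,m$ with $a_0=b_m=0$, any dependence propagates from an endpoint and forces all $E_1$-weights to vanish. The identical argument along the $E_2$-path (whose single skipped central vertex makes $c$ and $d$ vanish there) gives independence of $\{c,d\}$. Hence $\operatorname{rank}M\le 2$ would force $\opspan\{a,b\}=\opspan\{c,d\}$, and it remains only to exclude this.

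Next I would pass to generating functions. With $A(z)=\sum_k a_kz^k$ and $g(z):=A(z)/z$ (a nonzero polynomial, as $a_0=0$), the shift relation gives $\opspan\{a,b\}=\{\,h(z)g(z):\deg h\le 1\,\}$. Assuming $\opspan\{a,b\}=\opspan\{c,d\}$ and writing $c=\mu a+\nu b$, $d=\mu' a+\nu' b$ with $\mu\nu'-\mu'\nu\ne 0$, one obtains $C=(\mu z+\nu)g$ and $D=(\mu' z+\nu')g$, where $C(z)=\sum_{t=1}^{m-1}\beta_t z^{v_t}$ and $D(z)=\sum_{t=1}^{m-1}\beta_t z^{v_{t-1}}$ are the generating functions of the incoming/outgoing $E_2$-weights, $\beta_t$ being the weight of the $t$-th edge along the $E_2$-path and $v_0=0,\ v_1=m,\ v_2=1,\ v_3=m-1,\dots$ its vertex order. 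Eliminating $g$ yields the single polynomial identity
\[(\mu z+\nu)\,D(z)=(\mu' z+\nu')\,C(z),\]
so the goal becomes to show that this identity, together with $\mu\nu'-\mu'\nu\ne 0$, forces all $\beta_t=0$, contradicting $w|_{E_2}\ne 0$.

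The combinatorial heart, and the step I expect to be the main obstacle, is this last implication. Comparing coefficients at the extreme exponents is clean and initializes the argument: the exponent $z^0$ occurs only through the outgoing weight at $v_0=0$, giving $\nu\beta_1=0$, while the top exponent $z^{m+1}$ occurs only through the vertices adjacent to $m$, giving $\mu\beta_2=\mu'\beta_1$. The difficulty is that the $E_2$-path scrambles the indices in a zig-zag, so the interior exponents interleave the $\beta_t$ and no single coefficient comparison concludes. I would exploit the self-similarity of the construction: deleting the two extreme vertices $0$ and $m$ leaves (after relabelling $\{1,\dots,m-1\}$) exactly the graph $\Gamma$ of parameter $m-2$, for both $E_1$ and $E_2$. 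Using the extreme relations to peel off $\beta_1,\beta_2$, together with the anchor $a_s=a_{s+1}=0$ at the skipped vertex $s$ (which follows from $c_s=d_s=0$ and invertibility of the coefficient matrix, via $(c_k,d_k)^{\mathsf T}=\bigl(\begin{smallmatrix}\mu&\nu\\\mu'&\nu'\end{smallmatrix}\bigr)(a_k,a_{k+1})^{\mathsf T}$), the identity should reduce to its $(m-2)$-analogue, and an induction on $m$ with base cases $m\le 2$ checked directly would finish. The subtlety to handle carefully is that the Möbius-type coefficients $\mu,\nu,\mu',\nu'$ are shared across the inductive steps, so the reduction must carry them along rather than reset them.
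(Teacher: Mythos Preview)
Your setup is clean and genuinely different from the paper's: recasting $Z_w$ as the column span of a $4\times(m+1)$ matrix, observing that $\{a,b\}$ and $\{c,d\}$ are each independent, and reducing to the polynomial identity $(\mu z+\nu)D=(\mu'z+\nu')C$ is a nice systematic reformulation. The anchor $g_{s-1}=g_s=0$ is also correctly extracted and is a real constraint.

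The gap is in the inductive step. For the identity to reduce to its $(m-2)$-analogue with the \emph{same} matrix $\bigl(\begin{smallmatrix}\mu&\nu\\\mu'&\nu'\end{smallmatrix}\bigr)$, you need the boundary data to vanish: concretely one must have $\alpha_1=\alpha_m=0$ and $\beta_1=\beta_2=0$, since deleting vertices $0$ and $m$ changes the in/out weights at the new boundary vertices $1$ and $m{-}1$. Your extreme relations only give $\nu\beta_1=0$ and $\mu\beta_2=\mu'\beta_1$, which do not force $\beta_1=\beta_2=0$ (for instance when $\nu=0$), and neither the extreme relations nor the anchor yield $\alpha_1=0$ or $\alpha_m=0$. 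Spelled out in your generating-function language: writing $\tilde g(z)=(g(z)-g_0-g_{m-1}z^{m-1})/z$ and $\tilde C(z)=(C(z)-\beta_1 z^m-\beta_2 z)/z$, the equality $\tilde C=(\mu z+\nu)\tilde g$ is equivalent to the four conditions $\nu g_0=0$, $\mu g_0=\beta_2$, $\nu g_{m-1}=0$, $\mu g_{m-1}=\beta_1$; the middle two are \emph{not} consequences of $C=(\mu z+\nu)g$ (they would require $\nu\alpha_2=0$ and $\beta_3=\mu\alpha_{m-1}$, which you have not derived). The analogous discrepancy appears for $\tilde D$. So the peeling-off step, as stated, does not go through.

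By contrast, the paper's induction deletes the \emph{center} vertex $s=\lceil m/2\rceil$ rather than the extremes. It first proves $\Psi_s(w)=0$ directly (using that $s$ has no $E_2$-edges and comparing $\Psi_s(w)$ with $\Psi_i(w)$ for the first vertex $i$ along the $E_2$-path with nonzero outgoing weight), so both $E_1$-weights at $s$ vanish; if in addition the last $E_2$-edge (between $s-1$ and $s+1$) has weight zero, deleting $s$ and that edge produces exactly $\Gamma_{m-1}$ with all relations intact. The remaining case is then handled by a short explicit argument, split into $m$ even and $m$ odd, chasing zero coordinates of $\Psi_0,\Psi_m,\Psi_{s\pm1},\Psi_{s\pm2}$ to a contradiction. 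The key point is that at the deleted vertex \emph{all four} weights have already been shown to vanish, so no boundary mismatch arises; your extreme-vertex deletion does not enjoy this property.
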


\begin{proof}
  By induction on $m$. Assume that $Z_w$ is not contained in any coordinate hyperplane. Consider the special vertex $k := \lceil m/2 \rceil$ in $\Gamma$, which has no adjacent edges in $E_2$. 
  
  First, we show that $\Psi_k(w) = 0$: Along the directed path of edges from $E_2$, let $(i,j)$ is the first edge with non-zero weight. Then both the vertices $i$ and $k$ have no weight on incoming edges from $E_2$, hence $\Psi_i(w)$ and $\Psi_k(w)$ lie in the coordinate hyperplane $V(e_2^*) \subseteq \C^4$. Since $\dim Z_w \leq 2$ and $Z_w \not\subseteq V(e_2^*)$, the vectors $\Psi_i(w)$ and $\Psi_k(w)$ must be proportional. But $\Psi_i(w) \notin V(e_3^*)$, while $\Psi_k(w) \in V(e_3^*)$, so we conclude that $\Psi_k(w) = 0$.
  
  Secondly, we may assume that the edge in $E_2$ between the vertices $k-1$ and $k+1$ has non-zero weight: Otherwise, we may delete this edge as well as the the vertex $k$ and its incident edges to obtain a weighted graph which can be viewed as a subgraph of the graph $\Gamma$ for the case $m-1$. This case is covered by the induction hypothesis.
  
  In particular, $\Psi_{k-1}(w) \neq 0$ and $\Psi_{k+1}(w) \neq 0$. One of the vertices $k-1$ and $k+1$ has no outgoing edge in $E_1$, while the other has an outgoing edge in $E_2$ with non-zero weight. Hence, $\Psi_{k-1}(w)$ and $\Psi_{k+1}(w)$ are not proportional. Because of $\dim Z_w \leq 2$, they must form a basis of $Z_w$.
  
  We now need to distinguish between even and odd $m$.
  
  \emph{Case 1: $m$ is even.} Then the last edge of the directed path formed by $E_2$-edges is $(k-1, k+1) \in E_2$. The vertex $0$ has no incoming edge in $E_1$, so $\Psi_0(w) \in V(e_0^*)$. Since also $\Psi_{k+1}(w) \in V(e_0^*)$ and $Z_w \not\subseteq V(e_0^*)$ and $\dim Z_w = 2$, the vector $\Psi_0(w)$ must be a multiple of $\Psi_{k+1}(w)$. But the vertex $0$ has no incoming edge in $E_2$, while the vertex $k+1$ has an incoming edge in $E_2$ with non-zero weight, so in fact, we must have $\Psi_0(w) = 0$.
  
  If also $\Psi_m(w) = 0$, then we may delete the vertices $0$ and $m$ and their incident edges to obtain the graph $\Gamma$ for the case of replacing $m$ by $m-2$, so this case is already covered by the induction hypothesis. So, we may assume $\Psi_m(w) \neq 0$.
  
  Since the vertex $m$ has no outgoing edge in $E_1$, the vectors $\Psi_m(w)$ and $\Psi_{k-1}(w)$ are both non-zero vectors in $V(e_1^*)$, so they must be proportional by $\dim Z_w \leq 2$ and $Z_w \not\subseteq V(e_1^*)$. Since $\Psi_0(w) = 0$, we have $\Psi_m(w) \in V(e_2^*)$, so we must also have $\Psi_{k-1}(w) \in V(e_2^*)$. This means that the edge $(k+2,k-1) \in E_2$ has weight zero.
  
  This in turn implies $\Psi_{k+2}(w) \in V(e_3^*)$. In particular, $\Psi_{k+2}(w)$ is proportional to $\Psi_{k+1}(w)$. Then $\Psi_{k+2}(w) \in V(e_0^*)$, so the edge $(k+1,k+2)\in E_1$ has weight zero. But then both $\Psi_{k-1}(w)$ and $\Psi_{k+1}(w)$ lie in $V(e_1^*)$, contradicting $Z_w \not\subseteq V(e_1^*)$, since they form a basis. This concludes Case~1.
  
  \emph{Case 2: $m$ is odd.} Here, the last edge along the directed path of edges from $E_2$ is $(k+1,k-1)$. We argue similar to Case~1: The vector $\Psi_m(w)$ is proportional to $\Psi_{k-1}(w)$, since the vertex $m$ has no outgoing edge in $E_1$. The vertex $0$ has no incoming edge in $E_1$, so $\Psi_0(w)$ is proportional to $\Psi_{k+1}(w)$. 
  
  If one of $\Psi_0(w)$ and $\Psi_n(w)$ is zero, then so is the other, because the edge in $E_2$ between them has weight zero, while the edge $(k+1,k-1)$ does not. But this case is covered by the induction hypothesis, as previously in Case~1.
  
  Hence, $\Psi_0(w) \neq 0$ and $\Psi_m(w) \neq 0$. The edge $(m,2) \in E_2$ has weight zero, since $\Psi_m(w)$ is proportional to $\Psi_{k-1}(w)$. But this implies that $\Psi_1(w)$ must be proportional to $\Psi_{k+1}(w)$. Then the edge $(0,1)\in E_1$ must have weight zero. But since $\Psi_0(w)$ is a non-zero multiple of $\Psi_{k+1}(w)$, this implies that both $\Psi_{k-1}(w)$ and $\Psi_{k+1}(w)$ lie in $V(e_1^*)$, a contradiction.
  This concludes Case~2 and therefore the proof.
\end{proof}

\begin{proof}[Proof of \cref{thm:graphConstr}]
  Combining the previous Lemmas, we have established the following: A tensor $t \in W$ is of border rank $\leq 2$ if and only if $t \in W_1$ or $W_2$. In other words, $\sigma_2(Y_{\nVec,\dVec}) \cap L = L_1 \sqcup L_2$.
  
  It only remains to show that $L_k \cap \sigma_2^{\circ}(X) = \emptyset$ for $k = 1,2$. For all $\lambda_{i,j} \in \C$, the vector $\sum_{(i,j) \in E_1} \lambda_{i,j} u_{i,j} \in W_1$ is the tangent vector to the Segre variety at the point $e_0 \otimes e_0 \otimes (\sum_{(i,j) \in E_1} \lambda_{i,j} e_i)$ in the direction $e_1 \otimes e_1 \otimes (\sum_{(i,j) \in E_1} \lambda_{i,j} e_j)$. Such a tangent vector is of rank~$3$ unless $\sum_{(i,j) \in E_1} \lambda_{i,j} e_i$ is proportional to $\sum_{(i,j) \in E_1} \lambda_{i,j} e_j$, which is the case if and only if $\lambda_{i,j}=0$ for all $(i,j) \in E_1$. Hence $L_1 \cap \sigma_2^{\circ}(X) = \emptyset$. The same argument proves the claim for $L_2$.
\end{proof}

We gave a geometric construction of an injection $\P^1 \times \P^1 \times \P^m \injTo \P^{2m+4}$. Choosing appropriate bases, we arrive at the following explicit description:

\begin{cor} \label{cor:P1P1Pm}
  The following morphism is injective:
  \begin{align*}
    &\P^1 \times \P^1 \times \P^m \to \P^{2(m+2)}, \\
    [x_0:x_1] \times &[y_0:y_1] \times [z_0:\ldots:z_m] \\ &\mapsto \Big[x_0y_1 z_m : x_1 y_1 z_{\lceil m/2 \rceil} : x_1 y_1 z_{\lceil m/2 \rceil + (-1)^m} : x_0 y_0 z_{i+1} - x_0 y_1 z_i :  \\
    &\phantom{\mapsto  \Big[~} x_0 y_0 z_{i+1} - x_1 y_0 z_i : x_1 y_0 z_{m-j+\lfloor 2j/m \rfloor} - x_0 y_0 z_j : x_1 y_0 z_{m-j+\lfloor 2j/m \rfloor} - x_1 y_1 z_j \mid \\
    &\phantom{\mapsto  \Big[~} i \in \{0,1,\ldots,m-1\}, \; j \in \{0,1,\ldots,m\} \setminus \{\lceil m/2 \rceil, \lceil m/2 \rceil + (-1)^m\}\Big].
  \end{align*}
  In particular, $\injDim{\P^1 \times \P^1 \times \P^m, \O(1,1,1)} \leq 2(m+2)$.
\end{cor}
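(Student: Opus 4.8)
The plan is to obtain \cref{cor:P1P1Pm} as the explicit coordinate form of the injection already constructed geometrically in \cref{thm:graphConstr}; no new geometry is needed, only a change of basis.

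Write $U := \C^2 \otimes \C^2 \otimes \C^{m+1}$, so that the Segre embedding $\varphi_{\O(1,1,1)}$ identifies $X := \P^1 \times \P^1 \times \P^m$ with $Y \subseteq \P(U)$. By \cref{thm:graphConstr} the subspace $W = W_1 + W_2 \subseteq U$, of dimension $2m-1$, has the property that $L = \P(W)$ avoids $\sigma_2^{\circ}(Y)$. Hence, by \cref{lem:secantAvoidance}, the linear projection $\pi \colon \P(U) \dashrightarrow \P(U/W)$ restricts to an injective morphism on $Y$. Since $\dim(U/W) = 4(m+1) - (2m-1) = 2m+5$, the target is $\P(U/W) \cong \P^{2(m+2)}$, and the composite $X \xrightarrow{\varphi_{\O(1,1,1)}} Y \xrightarrow{\pi} \P^{2(m+2)}$ is an injective morphism given by a linear subsystem of $|\O(1,1,1)|$. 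This already yields $\injDim{\P^1 \times \P^1 \times \P^m, \O(1,1,1)} \leq 2(m+2)$; the remaining work is purely to read off its coordinates.

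To do so I would use, as in the proof of \cref{prop:secantAvoidance}, that $\restr{\pi}{Y}$ is the map $\varphi_{W^\perp}$ attached to the annihilator $W^\perp \subseteq U^* = H^0(X, \O(1,1,1))$, i.e.\ the space of trilinear forms vanishing on $W$. Under the Segre parametrization the form $e_a^* \otimes e_b^* \otimes e_c^*$ pulls back to the monomial $x_a y_b z_c$, and a form $\sum \lambda_{abc}\, e_a^* \otimes e_b^* \otimes e_c^*$ lies in $W^\perp$ exactly when it annihilates every $u_{i,j}$ and $v_{i,j}$, which from their defining expressions amounts to the linear conditions
\[\lambda_{0,0,j} + \lambda_{0,1,i} + \lambda_{1,0,i} = 0 \quad ((i,j) \in E_1), \qquad \lambda_{1,0,j} + \lambda_{1,1,i} + \lambda_{0,0,i} = 0 \quad ((i,j) \in E_2).\]
I would then exhibit the trilinear forms corresponding to the coordinate expressions in the statement — the pure monomials $x_0 y_1 z_m$, $x_1 y_1 z_{\lceil m/2 \rceil}$, $x_1 y_1 z_{\lceil m/2\rceil + (-1)^m}$ together with the differences $x_0 y_0 z_{i+1} - x_0 y_1 z_i$, $x_0 y_0 z_{i+1} - x_1 y_0 z_i$, $x_1 y_0 z_{m-j+\lfloor 2j/m\rfloor} - x_0 y_0 z_j$ and $x_1 y_0 z_{m-j+\lfloor 2j/m\rfloor} - x_1 y_1 z_j$ — and check termwise that each satisfies the two displayed conditions, hence lies in $W^\perp$. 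Finally I would confirm that these forms constitute a basis of $W^\perp$, which, since $\dim W^\perp = 2m+5$, amounts to verifying their linear independence; substituting the Segre parametrization then reproduces the stated morphism on the nose.

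The heavy geometric input is entirely contained in \cref{thm:graphConstr}, so the only real difficulty here is combinatorial bookkeeping: organizing the $E_1$- and $E_2$-differences so that they are independent and exhaust $W^\perp$, and correctly tracking the parity-dependent indices. The ceilings and the floor $\lfloor 2j/m\rfloor$ precisely encode the even/odd distinction — the position of the degenerate vertex $\lceil m/2 \rceil$, which carries no $E_2$-edge, and the endpoints of the two directed paths $E_1$ and $E_2$ in $\Gamma$ — that already drove the case split in the proof of \cref{thm:graphConstr}.
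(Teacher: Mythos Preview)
Your proposal is correct and follows exactly the paper's approach: the paper simply states that the corollary is the explicit description obtained from \cref{thm:graphConstr} by ``choosing appropriate bases'', and you have spelled out precisely what this entails, namely writing down a basis of the annihilator $W^\perp \subseteq U^*$ and checking that the listed trilinear forms satisfy the vanishing conditions imposed by the $u_{i,j}$ and $v_{i,j}$. Your dimension count, use of \cref{lem:secantAvoidance}, and identification of the parity bookkeeping (the non-source vertices of the $E_2$-path being exactly $\lceil m/2\rceil$ and $\lceil m/2\rceil + (-1)^m$) are all accurate.
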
 

\renewcommand*{\bibfont}{\small}
\printbibliography

\end{document}